\documentclass{article}

\usepackage[colorlinks,linkcolor=black,citecolor=black]{hyperref}
\usepackage{bm}
\usepackage{bbm}
\usepackage{mathrsfs}
\usepackage{amsmath}
\usepackage{amsthm}
\usepackage{amssymb}
\usepackage{mathdots}
\usepackage{multirow}
\usepackage{geometry}
\usepackage{mathtools}
\usepackage{fancyhdr}
\usepackage{authblk}
\usepackage[pagewise]{lineno}
\usepackage{indentfirst}
\usepackage[backend=biber,style=numeric,giveninits=true,sorting=nyt,maxbibnames=99]{biblatex}
\allowdisplaybreaks
\numberwithin{equation}{section}
\theoremstyle{plain}
\newtheorem{theorem}{Theorem}[section]
\newtheorem{proposition}[theorem]{Proposition}
\newtheorem{lemma}[theorem]{Lemma}

\newcommand{\re}{\operatorname{Re}}

\newcommand{\op}{\operatorname{Op}}
\newcommand{\sgn}{\operatorname{sgn}}
\newcommand{\ran}{\operatorname{Ran}}

\DeclareFieldFormat[article]{title}{#1}
\DeclareFieldFormat[article]{pages}{#1}
\DeclareFieldFormat[article]{number}{no\adddot\space#1}
\DeclareFieldFormat[article]{eid}{Art\adddot\space#1}

\renewbibmacro*{in:}{%
	\ifentrytype{article}
	{}
	{\printtext{\bibstring{in}\intitlepunct}}}
\renewbibmacro*{journal+issuetitle}{%
	\usebibmacro{journal}%
	\setunit*{\addspace}%
	\printfield{volume}%
	\setunit*{\addspace}%
	\usebibmacro{issue+date}%
	\setunit{\addcomma\space}%
	\printfield{number}%
	\setunit{\addcomma\space}%
	\printfield{eid}%
	\newunit}

\title{Linear Stability, Inviscid Damping, and Scattering for Monotone Shear Flows in 2D Ideal MHD}
\author{Zhile Li \thanks{School of Mathematical Sciences, University of Science and Technology of China, Hefei 230026, Anhui, China. Email: \texttt{lizhile@mail.ustc.edu.cn}}}
\date{}

\begin{document}
	\maketitle
	
	\begin{abstract}
		We study the linearized 2D ideal incompressible magnetohydrodynamic equations around a monotone shear flow \((U(y),0)\) with a constant horizontal magnetic field \((\beta,0)\) on \(\mathbb{T} \times \mathbb{R}\). For each fixed \(\beta \neq 0\), we assume that the shear rate \(U'\) is bounded above and below by positive constants and varies sufficiently slowly, although it need not be close to any constant. For every nontrivial perturbation with zero horizontal average, we establish sharp \(L^2\) time rates in the corresponding paired norms: the horizontal velocity and magnetic components remain of order one, the vertical components decay at rate \(\langle t \rangle^{-1}\), and the vorticity--current pair grows at rate \(\langle t \rangle\). We prove strong \(L^2\) scattering to two Alfv\'en profiles, identify the leading asymptotic behavior of the physical variables, and establish asymptotic completeness in the natural energy space.
	\end{abstract}
	
	\section{Introduction}
	
	The ideal incompressible magnetohydrodynamic (MHD) equations describe the motion of a perfectly conducting fluid coupled to a magnetic field. They are used, in particular, to study the dynamics of magnetized plasmas. In this paper, we consider these equations on \(\mathbb{T}_x \times \mathbb{R}_y\):
	\begin{equation}
		\label{eq:MHD}
		\begin{dcases}
			\partial_t \widetilde v + \widetilde v \cdot \nabla \widetilde v + \nabla \widetilde p = \widetilde b \cdot \nabla \widetilde b,\\
			\partial_t \widetilde b + \widetilde v \cdot \nabla \widetilde b = \widetilde b \cdot \nabla \widetilde v,\\
			\nabla \cdot \widetilde v = \nabla \cdot \widetilde b = 0.
		\end{dcases}
	\end{equation}
	Here \(\widetilde v\) and \(\widetilde b\) denote the velocity and magnetic field, respectively, and \(\widetilde p\) is the total pressure, including the magnetic pressure.
	
	A class of steady solutions of \eqref{eq:MHD} is given by
	\[v_E = (U(y),0), \qquad b_E =(\beta,0), \qquad p_E = \mathrm{constant},\]
	where \(U\) is a shear profile and \(\beta\) is a constant. Such equilibria represent flows along a uniform magnetic field whose speed varies across the field lines. They provide a simple setting in which to study shear mixing, magnetic coupling, and Alfv\'en-wave propagation.
	
	The linear stability of these equilibria is also of physical importance. Here, stability means that the velocity and magnetic perturbations governed by the linearized equations remain uniformly bounded in \(L^2\) as time evolves. To investigate linear stability around a monotone shear flow in \eqref{eq:MHD}, we write
	\[\widetilde v = v_E + v,\qquad \widetilde b = b_E + b,\qquad \widetilde p = p_E + p.\]
	Substituting these expressions into \eqref{eq:MHD} and neglecting the nonlinear terms in the perturbations, we obtain the linearized system:
	\begin{equation}
		\label{eq:linear}
		\begin{dcases}
			\partial_t v + U(y) \partial_x v + U'(y) v_2 e_1 + \nabla p = \beta \partial_x b,\\
			\partial_t b + U(y) \partial_x b - U'(y) b_2 e_1 = \beta\partial_x v,\\
			\nabla \cdot v = \nabla \cdot b = 0,
		\end{dcases}
	\end{equation}
	where \(e_1 = (1,0)\). Thus, the linear stability problem reduces to studying the solutions of \eqref{eq:linear}.
	
	Early normal-mode and spectral studies of ideal MHD shear flows revealed both stabilizing and destabilizing effects of parallel magnetic fields. In finite channels, Stern \cite{stern1963} showed that a suitably varying field can destabilize plane Couette flow, although the flow is stable without the field. Kent \cite{kent1968} showed that a constant field can stabilize some monotone velocity shears but destabilize others. Thus, monotonicity alone does not ensure linear stability in general channel geometries. For antisymmetric velocity shears in a rigid-wall channel, Chen--Morrison \cite{chen1991} derived a sufficient condition for instability and showed that a suitable symmetric magnetic field can destabilize plane Couette flow. Hughes--Tobias \cite{hughes2001} extended Howard's semicircle theorem to MHD shear flows with aligned, spatially varying magnetic fields and obtained sufficient conditions for spectral stability.
	
	For the ideal equations linearized around the Couette flow \((y,0)\) with a constant horizontal magnetic field \((\beta,0)\), Liang--Wu--Zhai \cite{liang2026} studied the nonzero horizontal modes under the condition \(|\beta|>1/2\). They proved uniform \(L^2\) upper bounds for the horizontal velocity and magnetic components, \(\langle t\rangle^{-1}\) inviscid damping of the vertical components, and upper and lower bounds of order \(\langle t\rangle\) for the vorticity--current pair. For the nonlinear ideal system around the same equilibrium, Knobel \cite{knobel2026} proved stability up to times of order \(\varepsilon^{-1}\) for suitable Gevrey perturbations of size \(\varepsilon\) and every \(\beta\neq0\). For suitable initial data, the vorticity--current pair grows at a linear rate over this interval.
	
	The two-dimensional Couette equilibrium with a constant horizontal magnetic field has also been studied in dissipative MHD. With magnetic resistivity but no viscosity, Knobel--Zillinger \cite{knobel2023} found echo-driven norm inflation for a simplified linearization around nearby traveling-wave solutions, while Zhao--Zi \cite{zhao2024} proved nonlinear asymptotic stability of the Couette equilibrium when the field is sufficiently strong. Sobolev stability thresholds with positive viscosity and resistivity were obtained in \cite{dolce2024,knobel2025,jin2025,wang2026,knobel2025c}, with related results for directional dissipation in \cite{knobel2025a,wu2026}. Dolce--Knobel--Zillinger \cite{dolce2026} proved large norm inflation of the current in the viscous, nonresistive regime. For Couette flow with zero background magnetic field, Li--Wu--Wu--Xu \cite{li2026a} proved nonlinear stability when both dissipative effects act only in the vertical direction.
	
	In three dimensions, Liss \cite{liss2020} established a Sobolev stability threshold for equal viscosity and resistivity in a sufficiently strong uniform magnetic field whose direction satisfies a Diophantine condition. Subsequent works treated unequal coefficients, rationally aligned fields, and a field perpendicular to the shear plane \cite{rao2025,wang2026a,knobel2025b}.
	
	For more general ideal MHD equilibria in two dimensions with velocity \((u(y),0)\) and magnetic field \((b(y),0)\), Liu--Masmoudi--Zhai--Zhao \cite{liu2022b} proved damping of the vertical velocity and magnetic components for nonzero horizontal modes on \(\mathbb T^2\) under suitable assumptions. For velocity shears initially close to Couette, Chen--Zi \cite{chen2025} proved nonlinear stability in a dissipative setting with equal viscosity and resistivity and a sufficiently strong constant magnetic field.
	
	In addition to linear stability, it is natural to ask how solutions of \eqref{eq:linear} behave as \(t \to \infty\). The background shear transports perturbations at different speeds across streamlines, while the magnetic field induces Alfv\'en propagation. To identify the two transport branches, we consider the Els\"asser combinations \(v\pm b\). Adding and subtracting the evolution equations in \eqref{eq:linear} gives
	\[(\partial_t + (U(y) \mp \beta) \partial_x) (v \pm b) + U'(y) (v_2 \mp b_2) e_1 + \nabla p = 0.\]
	We call the branches with transport velocities \(U(y) - \beta\) and \(U(y) + \beta\) the two Alfv\'en channels. They propagate in opposite directions relative to the background flow, while the remaining terms still couple them.
	
	In this setting, the scattering problem asks whether, after removing the transport phases \(e^{-ik (U(y) \mp \beta) t}\) on each nonzero horizontal Fourier mode \(k\), the amplitudes of the two Alfv\'en channels converge strongly in \(L^2\) to time-independent profiles. A related question is how these profiles describe the leading large-time behavior of the velocity and magnetic field, as well as the vorticity and current density. A further issue is whether scattering is asymptotically complete, in the sense that every admissible pair of limiting profiles arises from a unique finite-energy initial perturbation.
	
	The dynamics and scattering of Alfv\'en waves have also been studied near equilibria with zero velocity and a constant magnetic field. For localized perturbations near a sufficiently strong field, Bardos--Sulem--Sulem \cite{bardos} proved global existence and showed that nonlinear interactions become asymptotically negligible, leading to linear Alfv\'en-wave behavior. Global small-data results that include the ideal equations were subsequently obtained by Wei--Zhang \cite{weizhang2017} and Cai--Lei \cite{cai2018}. In three dimensions, He--Xu--Yu \cite{he2018} established global nonlinear stability and constructed scattering fields along the two families of Alfv\'en characteristics. For the nonlinear ideal incompressible MHD equations on \(\mathbb R^3\), Li--Yu \cite{li2021} proved that vanishing scattering fields on a suitable pair of characteristic infinities force the Alfv\'en-wave perturbation to vanish identically. Li \cite{li2023} proved local invertibility of the maps from initial data to suitable pairs of scattering fields in weighted Sobolev spaces. Li later proved unilateral rigidity: vanishing of the scattering field of either Alfv\'en-wave family forces that family to vanish identically \cite{li2024}. Duan--Li \cite{duan2026} obtained quantitative estimates for approximating scattering fields from finite-time data.
	
	Around nonuniform magnetic fields, the long-time dynamics can take other forms. Classical analyses of MHD-wave damping by phase mixing include the work of Tataronis--Grossmann \cite{tataronis1973}. For a zero-velocity equilibrium with a strictly monotone positive horizontal magnetic field in a finite channel, Ren--Zhao \cite{ren2017} proved strong \(L^2\) convergence of the horizontal Els\"asser combinations in their respective characteristic coordinates. For equilibria with both velocity and magnetic shear, Hirota--Tatsuno--Yoshida \cite{hirota2005} analyzed contrasting magnetic-island regimes for affine profiles. These regimes were subsequently studied rigorously in finite channels: Zhai--Zhang--Zhao \cite{zhai2021} proved that magnetic islands appear in the limiting state for suitable data when the profiles have a common zero and magnetic shear strictly dominates, whereas Ren--Wei--Zhang \cite{ren2021} established spectral stability and magnetic-island destruction for affine profiles when flow shear dominates. Under an additional compatibility condition at the magnetic neutral line, they also proved strong \(L^2\) scattering of the horizontal Els\"asser combinations. In the present problem, the monotone velocity shear determines the spatially varying transport phases of both Alfv\'en channels.
	
	We study a class of monotone shear flows for which the shear rate varies slowly in space. More precisely, fix \(\beta \neq 0\) and assume that
	\[0 < U'_{\inf} \le U'(y) \le U'_{\sup} < \infty, \qquad \| U'' \|_{W^{n,\infty}} \le \epsilon,\]
	where \(n\) is a sufficiently large fixed integer and \(\epsilon = \epsilon(U'_{\inf},U'_{\sup},\beta) > 0\) is a sufficiently small threshold depending only on these three parameters. Slow variation does not require \(U'\) to be close to a constant. For example, consider the family of shear rates
	\[U'(y) = \lambda \left( 1 + \frac{1}{2} \tanh \left( \frac{y}{\ell} \right) \right), \qquad \lambda>0, \qquad \ell \ge1.\]
	For each fixed \(n\), these profiles satisfy
	\[\frac{\lambda}{2} \le U'(y) \le \frac{3 \lambda}{2}, \qquad \| U'' \|_{W^{n,\infty}} \lesssim \frac{\lambda}{\ell},\]
	while
	\[\inf_{c \in \mathbb R} \| U' - c \|_{L^\infty} = \frac{\lambda}{2}.\]
	Thus the slow-variation condition holds for sufficiently large \(\ell\), even though \(U'\) remains at least \(\frac{\lambda}{2}\) away from every constant in \(L^\infty\).
	
	To state the stability theorem in full, we introduce the vorticity \(\omega\) and current density \(j\) of the perturbations:
	\[\omega = \nabla^\perp \cdot v, \qquad j = \nabla^\perp \cdot b,\]
	where \(\nabla^\perp = (-\partial_y, \partial_x)\). We also introduce the natural energy space
	\[\mathcal H = \left\{ (v,b) \in (L^2_{\neq} (\mathbb T \times \mathbb R))^4: \nabla \cdot v = \nabla \cdot b = 0 \right\},\]
	where
	\[L^2_{\neq} (\mathbb T \times \mathbb R) = \left\{ f \in L^2(\mathbb T \times \mathbb R): \int_{\mathbb T} f(x,\cdot) dx = 0 \right\}.\]
	The stability theorem is as follows.
	
	\begin{theorem}[Stability and inviscid damping]
		\label{thm:stability}
		Let \(\beta \neq 0\), and assume that
		\[0 < U'_{\inf} \le U'(y) \le U'_{\sup} < \infty, \qquad \| U'' \|_{W^{n,\infty}} \le \epsilon,\]
		where \(n\) is a sufficiently large fixed integer and \(\epsilon = \epsilon(U'_{\inf},U'_{\sup},\beta) > 0\) is a sufficiently small threshold depending only on these three parameters. Let \((v_{\mathrm{in}},b_{\mathrm{in}}) \in \mathcal H \cap (H^2(\mathbb T\times\mathbb R))^4\) be nonzero. Then the solution \((v,b)\) of \eqref{eq:linear} with the initial data \((v_{\mathrm{in}},b_{\mathrm{in}})\), together with its vorticity and current density \((\omega,j)\), satisfies
		\[
		\begin{alignedat}{2}
			c_{\mathrm{in}} &\le \| (v_1,b_1) \|_{L_x^2L_y^2} (t) &&\le C_{\mathrm{in}},\\
			c_{\mathrm{in}} \langle t \rangle^{-1} &\le \| (v_2,b_2) \|_{L_x^2L_y^2} (t) &&\le C_{\mathrm{in}} \langle t \rangle^{-1},\\
			c_{\mathrm{in}} \langle t \rangle &\le \| (\omega,j) \|_{L_x^2L_y^2} (t) &&\le C_{\mathrm{in}} \langle t \rangle,
		\end{alignedat}
		\]
		for all \(t \ge 0\), where \(c_{\mathrm{in}}, C_{\mathrm{in}}>0\) are constants depending on \(U'_{\inf},U'_{\sup},\beta\) and the initial perturbation.
	\end{theorem}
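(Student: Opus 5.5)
Fourier transforming in \(x\) (mode \(k\neq0\)) reduces the problem to a family of 1D systems. Writing \(v=\nabla^\perp\psi\), \(b=\nabla^\perp\phi\), the linearized system becomes
\[
\partial_t\omega+ikU\omega-ikU''\psi=ik\beta j,\qquad \partial_t j+ikUj-2ikU'\,\partial_y\cdots=ik\beta\omega,
\]
with additional lower-order terms coming from \(U'\) acting on \(\phi\) through \(b_2\). In the Elsässer variables \(z^\pm=\omega\pm j\), the leading transport is \(\partial_t+ik(U\mp\beta)\). The plan is to pass to the profiles \(f^\pm=e^{ik(U\mp\beta)t}z^\pm\). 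This suggests the change of variables \(Y=U(y)\), in which \(\partial_y\) becomes \(U'(\partial_Y-ikt)\) acting on profiles. The conjugated Laplacian \(\Delta_t=\partial_x^2+(U'(\partial_Y-ikt))^2+\ldots\) then behaves like the Couette operator \(-k^2-U'^2(\eta-kt)^2\), up to errors controlled by \(\|U''\|_{W^{n,\infty}}\le\epsilon\).

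Step 1 is to handle the Couette-like model where \(U'\) is frozen. For Couette, Liang--Wu--Zhai treated \(|\beta|>1/2\) with an energy method. Here, however, the key structural gain for every \(\beta\neq0\) is that the two channels oscillate relative to each other with phase \(e^{\pm 2ik\beta t}\). The coupling between \(f^+\) and \(f^-\) through the stream functions \(\Delta_t^{-1}\) therefore carries an oscillating factor, and can be integrated by parts in time (a normal-form step). That integration gains \(\langle t\rangle^{-1}\) times \(\partial_t\) of multipliers like \(\frac{1}{k^2+U'^2(\eta-kt)^2}\), which are integrable in time uniformly in \(\eta\). Concretely, I would write the profile system as \(\partial_t F=A(t)F+e^{2ik\beta t\sigma}B(t)F\), where \(F=(f^+,f^-)\), \(A\) is the diagonal self-interaction through \(U'\) and \(\Delta_t^{-1}\), and \(B\) is off-diagonal. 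The next tasks are:
\begin{itemize}
\item show the diagonal part is controlled by a Fourier multiplier energy with weights of the form \(\exp(\int \frac{\ldots}{1+(\eta/k-t)^2})\), which are bounded above and below;
\item perform a normal-form transformation \(F\mapsto F - \frac{e^{2ik\beta t\sigma}}{2ik\beta\sigma}BF\) to remove the off-diagonal part, leaving terms with integrable-in-time coefficients of size \(\lesssim\langle t-\eta/k\rangle^{-2}\).
\end{itemize}
Grönwall then gives \(\|F(t)\|_{L^2}\approx\|F(0)\|_{L^2}\) for \(t\ge0\), and in fact convergence of \(F\) to a limit \(F_\infty\) in \(H^{s}\) for the data regularity in use. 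Since the map \(F(0)\mapsto F_\infty\) is close to an isomorphism, \(F_\infty\neq0\) for nonzero data.

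Step 2 is the perturbation from variable \(U'\). In the coordinates \(Y=U(y)\), \(U'\) becomes a slowly varying coefficient \(a(Y)=U'(U^{-1}(Y))\), and \(\Delta_t\) is no longer a Fourier multiplier. I would construct an approximate inverse of \(\Delta_t\) by pseudodifferential calculus with symbol \(-(k^2+a(Y)^2(\eta-kt)^2)^{-1}\). The commutators carry derivatives of \(a\), bounded by \(\epsilon\), and each derivative of the symbol in \(\eta\) gains \(\langle \eta-kt\rangle^{-1}\); the large \(n\) feeds this symbolic expansion. The key estimate is \(\|\Delta_t^{-1}-\op(\text{symbol})\|\lesssim \epsilon \langle t-\eta/k\rangle^{-3}\) in a suitable weighted sense. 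The resulting error terms in the energy are then \(\epsilon\)-small, time-integrable perturbations and can be absorbed. I expect this step, namely keeping the error terms integrable in time uniformly when \(U'\) is not near a constant, to be the main obstacle. What I would try first is to make the energy weights themselves symbols \(m(t,Y,\eta)\), so that the diagonal dynamics are handled exactly to leading order and only \(\epsilon\)-order commutators remain.

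Step 3 recovers the physical quantities. The upper bounds come from profile boundedness in \(L^2\) (and in \(H^1,H^2\) in \(\eta\) for the decay bounds), as follows:
\begin{itemize}
\item \(\|(v_2,b_2)\|=\|k(\psi,\phi)\|\lesssim\langle t\rangle^{-1}\|F\|_{H^1}\), because \((k^2+(\eta-kt)^2)^{-1}\langle\eta\rangle\lesssim\langle t\rangle^{-1}\langle\eta\rangle^{2}\);
\item \(\|(v_1,b_1)\|\lesssim\|F\|_{L^2}\);
\item \(\|(\omega,j)\|\lesssim\langle t\rangle\) is obtained by writing \(\omega\) as the data transported plus \(\partial_y\) of profiles, noting that \(\partial_y\) produces a factor \(kt\).
\end{itemize}
For the lower bounds I would use the scattering limit \(F_\infty\neq0\):
\begin{itemize}
\item \(v_1\approx \partial_y\Delta^{-1}\) applied to \(e^{-ikUt}f_\infty\) gives \(\|v_1\|\to c\|\ldots\|>0\) via the leading symbol \((\eta-kt)/(k^2+a^2(\eta-kt)^2)\cdot a\), and similarly \(t\|v_2\|\) tends to a positive limit, as does \(\|(\omega,j)\|/t\);
\item on bounded time intervals, positivity follows from continuity and nontriviality (a vanishing \((v_2,b_2)\) at some time combined with incompressibility would give zero data).
\end{itemize}
Combining the two regimes gives constants \(c_{\mathrm{in}},C_{\mathrm{in}}\).
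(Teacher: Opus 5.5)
\textbf{Wrong unknowns in Step 1.} Your profiles $f^\pm=e^{ik(U\mp\beta)t}(\omega\pm j)$ are at the vorticity level, and for them the claim $\|F(t)\|_{L^2}\approx\|F(0)\|_{L^2}$ is false. It contradicts the very lower bound $\|(\omega,j)\|\ge c_{\mathrm{in}}\langle t\rangle$ you are proving.

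The culprit is the term $-2U'\partial_y b_2$ in the current equation, which you treated as lower order. In sheared variables it produces the diagonal coefficient $\frac12(\partial_t\Delta_t)\Delta_t^{-1}$ in the equation for $f^\pm$. For Couette its symbol is $-u/(1+u^2)$ with $u=\eta/k-t$, i.e.\ about $1/t$. This is not integrable: the weight needed to absorb it grows like $\langle u\rangle$, so no weight $\exp(\int\cdots/(1+(\eta/k-t)^2))$ bounded above and below can do it.

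Your Step 3 is also inconsistent. It uses $F$ as if it were velocity-level (deriving $\|(\omega,j)\|\lesssim\langle t\rangle$ from a $\partial_y$). Yet the lower-bound bullet applies the vorticity-level symbol $a(\eta-kt)/(k^2+a^2(\eta-kt)^2)\sim t^{-1}$, which would force $\|v_1\|\to0$. The correct unknowns are $e^{\mp ik\beta t}(-\Delta_t)^{-1/2}(\Omega\pm J)$, the paper's $F_\pm$. For these, the diagonal $1/t$ term cancels exactly for Couette and becomes the off-diagonal oscillating coupling $e^{\mp 2ik\beta t}RF_\mp$.

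\textbf{The normal form plus Gr\"onwall does not close, even after this fix.} There are two problems.

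First, the remainders left after the normal form ($KF$, $R^2F$, $(\partial_tR)F$, and the $U''$ term) are controlled only by $\|k^2(-\Delta_t)^{-1}F\|$, and $\|k^2(-\Delta_t)^{-1}\|_{L^2\to L^2}=1$ for every $t$. Your gain $\langle t-\eta/k\rangle^{-2}$ is pointwise in $\eta$, so it is usable only when everything is a Fourier multiplier. For nonconstant $U'$ you need a priori $t^{-2}$ decay of $\|k^2(-\Delta_t)^{-1}F\|$, i.e.\ uniform-in-time control of two profile derivatives. That is exactly the core difficulty. Your Step 2 does not supply it, and the $H^1$, $H^2$ profile bounds used in Step 3 are never established.

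Second, the map $F\mapsto Z$ is not uniformly invertible for small $\beta$. Already for Couette, $R$ has symbol $r=u/(1+u^2)$ with $|r|\le\frac12$, and the $2\times2$ normal-form matrix has determinant $1-r^2/(4k^2\beta^2)$. This vanishes at some $(t,\eta)$ with $t\ge0$ whenever $|k\beta|\le\frac14$, so a bound on $Z$ gives none on $F$. The ``every $\beta\neq0$'' claim is therefore not delivered.

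\textbf{How the paper avoids both problems.} It uses the corrected velocity $\widetilde V=V+\frac{U'}{\beta}\Phi e_1$. This is a triangular change of unknowns, invertible for every $\beta\ne0$. After it, the only indefinite term is the pressure term $\frac{2}{\beta}\re\langle U'B_2,ik(-\Delta_t)^{-1}U'\widetilde V_2\rangle$, which has the form $\frac{1}{|\beta|}k^2(-\Delta_t)^{-1}$. It is dominated by the negative term $-\gamma U'k^2(-\Delta_t)^{-1}$ coming from $\partial_t S$, where $S=\exp(\gamma\arctan(D_t/ik))$ and $\gamma=1+(U'_{\sup})^2/(|\beta|U'_{\inf})$.

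The growth and decay rates then come from the terminal-time weights $G_s$, $H_s$ together with interpolation. The lower bounds come from time reversal and $\|(v,b)\|^2\le\|(v_2,b_2)\|\,\|(\omega,j)\|$, not from scattering. This also replaces your unproved claim that $F(0)\mapsto F_\infty$ is close to an isomorphism. The normal form appears only in Section 4, where its decay inputs come from these energy estimates and its invertibility is never needed.
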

	
	We next state the scattering result and the leading asymptotics of the physical variables. In what follows, \(k \in \mathbb Z \setminus \{0\}\) denotes a horizontal Fourier mode, and \(f_k\) denotes the corresponding Fourier coefficient of \(f\). We write \(\Delta_k = \partial_y^2 - k^2\) for the Laplacian on this mode, so that \((-\Delta_k)^{-1/2}\) is the Fourier multiplier in \(y\) with symbol \((k^2 + \eta^2)^{-1/2}\). To remove the transport phases of the two Alfv\'en channels while retaining their \(L^2\) size, we introduce the phase-corrected amplitudes \(F_\pm\) through their horizontal Fourier coefficients:
	\begin{equation}
		\label{eq:phase-corrected-amplitudes}
		F_{\pm, k}(t,y) = e^{ik (U(y) \mp \beta) t} (-\Delta_k)^{-1/2} (\omega\pm j)_k(t,y).
	\end{equation}
	Indeed, the Biot--Savart relation shows that this normalization leaves the \(L^2\) norm unchanged:
	\[\| F_{\pm,k} \|_{L_y^2} = \| (v \pm b)_k \|_{L_y^2}.\]
	
	\begin{theorem}[Scattering and leading asymptotics]
		\label{thm:scattering}
		Under the assumptions on \(U\) and \(\beta\) in Theorem \ref{thm:stability}, let \((v_{\mathrm{in}},b_{\mathrm{in}}) \in \mathcal H \cap (H^2(\mathbb T \times \mathbb R))^4\). The solution \((v,b)\) of the linearized system \eqref{eq:linear} with initial data \((v_{\mathrm{in}}, b_{\mathrm{in}})\) uniquely determines scattering profiles \(F_{\pm,\infty} \in L^2_{\neq}(\mathbb T \times \mathbb R)\) satisfying
		\[\| F_\pm (t) - F_{\pm,\infty} \|_{L_x^2L_y^2} \le C_{\mathrm{in}} \langle t \rangle^{-1}\]
		for all \(t \ge 0\), where \(C_{\mathrm{in}}>0\) depends on \(U'_{\inf},U'_{\sup},\beta\) and the initial data. Moreover, for \(k \neq 0\), the following asymptotics hold as \(t \to \infty\):
		\begin{align*}
			(v_1 \pm b_1)_k (t,y) &= -i \sgn(k) e^{-ik (U(y) \mp \beta) t} F_{\pm,\infty,k}(y) + O_{L_y^2} (C_{\mathrm{in}} t^{-1}),\\
			(v_2 \pm b_2)_k (t,y) &= - \frac{i \sgn(k)}{U'(y) t} e^{-ik (U(y) \mp \beta) t} F_{\pm,\infty,k}(y) + o_{L_y^2} (t^{-1}),\\
			(\omega\pm j)_k (t,y) &= |k| U'(y) t e^{-ik (U(y) \mp \beta) t} F_{\pm,\infty,k}(y) + o_{L_y^2} (t).
		\end{align*}
	\end{theorem}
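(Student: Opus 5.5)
We reduce to a single horizontal Fourier mode \(k\neq0\) and work with the Elsässer vorticities \(W_\pm=(\omega\pm j)_k\). Taking the curl of the Elsässer equations gives
\[
\partial_t W_\pm + ik(U\mp\beta)W_\pm - ikU''\,\psi_\mp^{(\pm)} = 0 .
\]
Here \(\psi_\mp^{(\pm)}\) denotes the stream function \(\Delta_k^{-1}W_\mp\), possibly combined with the \(W_\pm\) stream function; the precise pairing comes from \(\nabla^\perp\cdot(U'(v_2\mp b_2)e_1)=-\partial_y(U'(v_2\mp b_2))\). All coupling terms carry a factor \(U''\) or involve \(\partial_y\) of stream functions.

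Next we pass to the phase-corrected profiles \(G_\pm=e^{ik(U\mp\beta)t}W_\pm\). In the Couette-like regime, \(G_\pm\) satisfies
\[
\partial_t G_\pm = \mathcal L_\pm(t)G_\pm + \mathcal C_\pm(t)G_\mp ,
\]
whose operators are conjugations of \(\Delta_k^{-1}\) by the phases. Under the change of variables \(z=U(y)\), the conjugated operator \(e^{ik(U\mp\beta)t}\Delta_k^{-1}e^{-ik(U\mp\beta)t}\) behaves like a multiplier \((k^2+(\eta-kU't)^2)^{-1}\), up to errors of size \(\epsilon\). The self-interaction terms produce the standard \(\langle t\rangle^{-2}\) kernel decay. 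The cross terms between the two channels carry an additional oscillating factor \(e^{\pm 2ik\beta t}\). This factor is nonresonant because \(\beta\neq0\), so integrating by parts in time gains another power of \(t^{-1}\).

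The amplitude \(F_\pm=e^{ik(U\mp\beta)t}(-\Delta_k)^{-1/2}W_\pm\) is obtained from \(G_\pm\) by the conjugated operator \((-\Delta_k)^{-1/2}\), which behaves like the symbol \((k^2+(\eta-kU't)^2)^{-1/2}\). We therefore work with the norm already used in Theorem \ref{thm:stability}. I assume its proof yields a uniform bound on \(\|t^{-1}G_\pm\|\) in a suitable weighted norm, together with \(H^1\)-type control that is uniform up to a factor \(\langle t\rangle\). We then write
\[
\partial_t F_\pm = R_\pm(t),
\]
where \(R_\pm\) collects the commutator terms \([\,\partial_t,\text{conjugated }(-\Delta_k)^{-1/2}]\) together with the coupling terms. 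The key estimate is
\[
\|R_\pm(t)\|_{L^2}\le C_{\mathrm{in}}\langle t\rangle^{-2},
\]
which is the main obstacle.

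For the commutator part, we use the explicit identity \(\partial_t e^{ik(U\mp\beta)t}=ik(U\mp\beta)e^{ik(U\mp\beta)t}\). This turns the commutator into \([ikU,(-\Delta_k)^{-1/2}]\)-type terms. In conjugated variables these are symbols of the form \(kU'\partial_\eta\) applied to \((k^2+(\eta-kU't)^2)^{-1/2}\). The \(\partial_t\) of the symbol cancels against them exactly when \(U'\) is constant, leaving remainders proportional to \(U''\).

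The commutator remainders and the coupling remainders are then each bounded by \(\epsilon\langle t\rangle^{-2}\) times norms that Theorem \ref{thm:stability} controls, with \(H^2\) data used to absorb \(\partial_y\) losses. I will first try to handle this through a pseudodifferential calculus in the variable \(z=U(y)\), using \(n\) derivatives of \(U''\) to control the symbol remainders. Integrating \(\partial_tF_\pm=R_\pm\) from \(t\) to \(\infty\) then gives the existence of the limits \(F_{\pm,\infty}\) with the rate \(\langle t\rangle^{-1}\), and uniqueness of the limit is immediate. Summing over \(k\) uses uniformity of all constants in \(k\), which follows by tracking the homogeneity in \(k\).

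For the leading asymptotics, we invert the normalization. Velocity is recovered from the Elsässer vorticity by
\[
(v\pm b)_k=\nabla_k^\perp\Delta_k^{-1}W_\pm=\nabla_k^\perp(-\Delta_k)^{-1/2}\bigl(-(-\Delta_k)^{-1/2}W_\pm\bigr).
\]
Applied to \(e^{-ik(U\mp\beta)t}F_{\pm,\infty}\), the operator \(\partial_y(-\Delta_k)^{-1/2}\) acts through conjugation as the symbol \(i(\eta-kU't)/\sqrt{k^2+(\eta-kU't)^2}\). This tends to \(-i\sgn(k)\) pointwise, with error \(O(t^{-1})\) on \(H^1\) data. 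Because \(F_{\pm,\infty}\) need only lie in \(L^2\), we run this step on \(F_\pm(t)\), using its \(t\)-uniform regularity from the stability estimates together with the \(t^{-1}\) convergence. This produces the \(v_1\pm b_1\) asymptotics with the \(O(t^{-1})\) error, which yields the first formula.

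The operator \(ik(-\Delta_k)^{-1/2}\) gives the symbol \(ik/\sqrt{\cdots}\approx ik/(|k||U'|t)\). Here only \(o(t^{-1})\) is claimed, obtained by density, since \(F_{\pm,\infty}\) is merely \(L^2\). Finally, \(W_\pm=-(-\Delta_k)^{1/2}(-(-\Delta_k)^{-1/2}W_\pm)\) applied to the profile gives the symbol \(\sqrt{k^2+(\eta-kU't)^2}\approx|k|U't\). The remaining signs are fixed by the conventions of \eqref{eq:phase-corrected-amplitudes} and \(\nabla^\perp=(-\partial_y,\partial_x)\), again with \(o(t)\) errors by density.
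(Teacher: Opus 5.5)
\textbf{The key estimate fails.} Your plan breaks at the step you call the key estimate. You claim that, after removing the phases, the commutator from $(-\Delta_k)^{-1/2}$ cancels exactly when $U'$ is constant, that everything left is $O(\epsilon)$, and hence that $\|\partial_t F_\pm\|_{L^2}\lesssim\langle t\rangle^{-2}$. This is false already for Couette flow.

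Your vorticity equation drops the stretching term. The curl of the Els\"asser equations gives
\[(\partial_t+ik(U\mp\beta))W_\pm=U''(v_2\mp b_2)_k\mp2U'\partial_y b_{2,k},\]
and the last term has coefficient $U'$, not $U''$. In the sheared frame it combines with $\partial_t(-\Delta_t)^{-1/2}$ so that only the antisymmetric part $K$ is $O(\epsilon)$. The symmetric part survives as an order-one cross-channel coupling:
\[\partial_t F_\pm=KF_\pm+e^{\mp 2ik\beta t}RF_\mp+(\text{terms with }U''),\qquad R=\tfrac12(-\Delta_t)^{-1/2}(\partial_t\Delta_t)(-\Delta_t)^{-1/2}.\]
For $U(y)=y$ (so $\epsilon=0$) one has $K=0$. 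Then $R$ is the multiplier $k(\eta-kt)/(k^2+(\eta-kt)^2)$ in the sheared-frame frequency, which acts like $-t^{-1}$ on any fixed profile. So $\|\partial_tF_\pm(t)\|_{L^2}$ is of order $t^{-1}$, which is not integrable, whereas your bound would force $F_\pm$ to be constant. The same misreading appears for $G_\pm$: the self-interaction there is of order $t^{-1}$, not $t^{-2}$, and this is what produces the linear growth of $\Omega\pm J$.

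\textbf{What is needed instead.} You do mention integrating by parts in time against $e^{\pm2ik\beta t}$. But in your analysis no order-one cross term remains to apply it to, and you close by integrating a $t^{-2}$ bound instead. To use the oscillation you need the normal form
\[Z_\pm=F_\pm\pm\frac{e^{\mp2ik\beta t}}{2ik\beta}RF_\mp.\]
Its derivative contains $KF_\pm$, the $U''$ terms, $(\partial_tR)F_\mp$, and $e^{\pm2ik\beta t}R^2F_\pm$ (coming from $R\,\partial_tF_\mp$). Closing the argument requires two inputs:
\begin{itemize}
\item the improved bounds $\|R^2f\|,\|(\partial_tR)f\|,\|Kf\|\lesssim\|k^2(-\Delta_t)^{-1}f\|$, which go beyond $\|Rf\|\lesssim\|k(-\Delta_t)^{-1/2}f\|$ and use the identity $\partial_tR=2R^2+KR-RK-k^2(-\Delta_t)^{-1/2}(U')^2(-\Delta_t)^{-1/2}$;
\item the decay $\|k^2(-\Delta_t)^{-1}F_\pm(t)\|\lesssim\langle t\rangle^{-2}$.
\end{itemize}
Theorem~\ref{thm:stability} does not give this decay; it only yields $\langle t\rangle^{-1}$ at the $k(-\Delta_t)^{-1/2}$ level. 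The decay comes from the terminal-time lower-order estimate, Proposition~\ref{prop:lower-energy}. The $\langle t\rangle^{-1}$ rate then reflects the size of the correction term, not the integral of a $t^{-2}$ bound.

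\textbf{The asymptotics step also has gaps.} You invoke $t$-uniform $H^1$ regularity of $F_\pm(t)$, which the energy estimates do not provide. Only $\|\partial_y^2F_\pm\|\lesssim t^2$ is available, and even $\|\partial_yF_\pm\|=o(t)$ needs a separate interpolation-and-density argument. The sign-blind bounds of Theorem~\ref{thm:stability} also cannot fix the factor $-i\sgn(k)$. What is needed is that $F_\pm$ concentrates where $D_t/(ik)\to-\infty$. The paper gets this from the exact identity $H_t(t)=1-i\sgn(k)D_t(-\Delta_t)^{-1/2}$ together with $\|H_tF_\pm\|\lesssim t^{-2}$. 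Finally, your density arguments for $v_2\pm b_2$ and $\omega\pm j$ cannot be run on $F_{\pm,\infty}$ alone, because neither $t(-\Delta_t)^{-1/2}$ nor $t^{-1}(-\Delta_t)^{1/2}$ is bounded on $L^2$ uniformly in $t$. The paper instead uses $(-\Delta_t)^{1/2}-i\sgn(k)D_t=|k|G_t(t)^{1/2}H_t(t)$ together with the higher- and lower-order estimates.
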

	
	We also establish asymptotic completeness in the natural energy space.
	
	\begin{theorem}[Asymptotic completeness]
		\label{thm:completeness}
		Under the assumptions on \(U\) and \(\beta\) in Theorem \ref{thm:stability}, for every \((v_{\mathrm{in}},b_{\mathrm{in}}) \in \mathcal H\), the limits \(F_{\pm,\infty} = \lim\limits_{t \to \infty} F_\pm (t)\) exist strongly in \(L^2_{\neq}(\mathbb T \times \mathbb R)\). The scattering map
		\[\mathcal S: \mathcal H \longrightarrow L^2_{\neq}(\mathbb T \times \mathbb R) \times L^2_{\neq}(\mathbb T \times\mathbb R), \quad (v_{\mathrm{in}},b_{\mathrm{in}}) \longmapsto (F_{+,\infty},F_{-,\infty})\]
		is a bounded linear bijection with bounded inverse. More precisely, there exist constants \(c,C>0\), depending only on \(U'_{\inf},U'_{\sup}\) and \(\beta\), such that
		\[c \| (v_{\mathrm{in}},b_{\mathrm{in}}) \|_{L_x^2L_y^2} \le \| (F_{+,\infty},F_{-,\infty}) \|_{L_x^2L_y^2} \le C \| (v_{\mathrm{in}},b_{\mathrm{in}}) \|_{L_x^2L_y^2}.\]
	\end{theorem}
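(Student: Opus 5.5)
We prove Theorem \ref{thm:completeness} by combining a uniform two-sided \(L^2\) equivalence with a density argument. We work mode by mode in \(k\neq 0\), since all constants will be uniform in \(k\).

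\textbf{Step 1: uniform equivalence for smooth data.} For \((v_{\mathrm{in}},b_{\mathrm{in}})\in\mathcal H\cap H^2\), Theorem \ref{thm:scattering} already provides \(F_{\pm,\infty}\) with \(F_\pm(t)\to F_{\pm,\infty}\) in \(L^2\). We want constants \(c,C\), depending only on \(U'_{\inf},U'_{\sup},\beta\), such that
\[
c\|(v,b)(0)\|_{L^2}\le \|(F_+,F_-)(t)\|_{L^2}\le C\|(v,b)(0)\|_{L^2}
\]
for all \(t\ge0\). The natural tool is the modified energy functional behind Theorem \ref{thm:stability}. In the Elsässer/phase-corrected variables \(F_{\pm,k}\), the system becomes \(\partial_t F_{\pm,k}=\mathcal L_{\pm}(t)F_{\mp,k}+\mathcal R_\pm(t)F_{\pm,k}\). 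The coupling terms oscillate with phase \(e^{\pm 2ik\beta t}\), and the self-interaction terms are of size \(O(\langle t\rangle^{-2})\) after using \(\|U''\|_{W^{n,\infty}}\le\epsilon\).

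We construct an energy \(E_k(t)=\|F_{+,k}\|^2+\|F_{-,k}\|^2+\text{(corrector)}\). The corrector is chosen by a normal-form, or integration-by-parts in time, step exploiting \(\beta\neq0\), so that \(E_k\) is equivalent to \(\|F_{+,k}\|^2+\|F_{-,k}\|^2\) uniformly in \(t\) and \(k\). It should satisfy \(|\tfrac{d}{dt}E_k|\le C\epsilon\langle t\rangle^{-2}E_k\) plus terms that are integrable in time with constants independent of the data norm. The crucial point is that the upper and lower constants in Theorem \ref{thm:stability} depend on the data only through higher norms. Here we must extract the purely \(L^2\) part. Once \(E_k\) is set up this way, Gronwall gives the two-sided bound with universal constants.

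At \(t=0\) we have \(\|F_{\pm,k}(0)\|=\|(v\pm b)_k(0)\|\), so \(\|(F_+,F_-)(0)\|^2=2\|(v,b)(0)\|^2\). Letting \(t\to\infty\) then yields the bound of the theorem on \(\mathcal H\cap H^2\). I expect this step to be the main obstacle. It requires the error terms in the energy to be controlled by \(L^2\) norms alone, without derivative loss. If the first attempt fails, I would fall back on a duality/time-reversal argument: the backward flow from profiles should be controlled in the same way, by the same energy run from \(t=\infty\).

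\textbf{Step 2: extension and the limit for general data.} The solution map \((v_{\mathrm{in}},b_{\mathrm{in}})\mapsto F_\pm(t)\) is uniformly bounded in \(t\) on \(\mathcal H\) by Step 1 and density. Given \((v_{\mathrm{in}},b_{\mathrm{in}})\in\mathcal H\) and \(\delta>0\), choose smooth data within \(\delta\). Then
\[
\|F_\pm(t)-F_\pm(s)\|\le 2C\delta+\|F^{\mathrm{sm}}_\pm(t)-F^{\mathrm{sm}}_\pm(s)\|,
\]
so \(F_\pm(t)\) is Cauchy and the strong limit exists. Thus \(\mathcal S\) is defined, linear, and bounded, and the lower bound \(c\|(v_{\mathrm{in}},b_{\mathrm{in}})\|\le\|\mathcal S(v_{\mathrm{in}},b_{\mathrm{in}})\|\) passes to the limit. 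This gives injectivity and closed range.

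\textbf{Step 3: surjectivity.} It suffices to show that the range is dense. The data-to-\(F(t)\) map \(T(t)\) is invertible for each \(t\), because the linear evolution is well posed backward in time. Given a profile \(G=(G_+,G_-)\) that is smooth and compactly supported in frequency, set \(\mathrm{data}_t=T(t)^{-1}G\). The equivalence of Step 1, applied on \([0,t]\), gives \(\|\mathrm{data}_t\|\le c^{-1}\|G\|\).

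We next show that \(\mathcal S(\mathrm{data}_t)-G\to0\). This holds because \(\|\mathcal S(\mathrm{data}_t)-T(t)\mathrm{data}_t\|\) equals the tail \(\int_t^\infty\|\partial_sF\|\,ds\) for data whose evolved state at time \(t\) is the smooth \(G\). That tail decays like \(\langle t\rangle^{-1}\) by the decay mechanism of Theorem \ref{thm:scattering}, which we apply on \([t,\infty)\) to the smooth profile \(G\) in place of \(F_{\mathrm{in}}\). Hence \(G\) lies in the closure of the range, and since the range is closed, \(\mathcal S\) is surjective. The bounded inverse then follows from the lower bound.
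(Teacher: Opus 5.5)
Your proposal has two genuine gaps: the uniform two-sided bound in Step~1 (especially its lower half), and the uniformity in \(t\) of the tail estimate in Step~3.

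\textbf{Step~1.} This step leaves the central estimate unproved, and the energy you sketch would not give it.

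For data only in \(\mathcal H\), the operators in \eqref{eq:scattering-system} give no decay. \(K\), \(R\) and the curvature term are controlled by \(\|k^2(-\Delta_t)^{-1}F\|\) and \(\|k(-\Delta_t)^{-1/2}F\|\). These decay at the rates \(\langle t\rangle^{-2}\) and \(\langle t\rangle^{-1}\) only for \(H^2\) data, and then with data-dependent constants. Moreover, the principal part of \(R\) in \eqref{eq:scattering-R-representation} carries \(U'\), not \(U''\), so there is no factor \(\epsilon\).

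Even after a normal-form correction, \(\frac{d}{dt}E_k\) is of the size of the dissipation \(\|k(-\Delta_t)^{-1/2}F\|^2\). Its time integral is bounded only through Proposition~\ref{prop:basic-energy}, and only by a non-small multiple of \(E_k(0)\). Integrating such a bound forward gives an upper bound but never a lower bound.

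You do not need a new energy. The identity \(\|F_{\pm,k}(t)\|_{L^2_y}=\|(v\pm b)_k(t)\|_{L^2_y}\) holds at every \(t\), not only at \(t=0\). Hence \(\|(F_+,F_-)(t)\|^2=2\|(v,b)(t)\|^2\), as in \eqref{eq:amplitude-parallelogram}.
\begin{itemize}
\item The upper bound is then \eqref{eq:mode-uniform-bound}, whose constants depend only on \(U'_{\inf},U'_{\sup},\beta\).
\item The lower bound is \eqref{eq:uniform-reverse-energy-bound}. It follows by applying the same bound to the time-reversed solution \((v_1,-v_2)(s-t,-x,y)\), \((b_1,-b_2)(s-t,-x,y)\), which again solves \eqref{eq:linear}.
\end{itemize}
Your fallback of ``running the energy from \(t=\infty\)'' is not this argument. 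The concrete reversal symmetry is the idea you need.

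\textbf{Step~3.} The tail estimate is not shown to be uniform in the starting time.

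First, \(\int_t^\infty\|\partial_sF\|\,ds\) need not be finite. The cross-channel term \(e^{\mp2ik\beta s}RF_\mp\) is only \(O(\langle s\rangle^{-1})\), which is exactly why the normal-form variables \eqref{eq:scattering-normal-form} are used.

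Second, and more seriously, Theorem~\ref{thm:scattering} restarted at time \(t\) has a constant depending on the \(H^2\) norm of the restarted data. For a state with \(F(t)=G\), this is the norm of a function of the form \(e^{-ikUt}G\) up to zeroth-order multipliers. That norm is of size \(\langle t\rangle^2\|G\|_{H^2}\), so you get no decay in \(t\).

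The missing ingredient is \eqref{eq:terminal-shift-bound}:
\begin{itemize}
\item For fixed smooth \(f\) and \(s\ge t\), the map \(\tau\mapsto\|H_s(\tau)f\|\) is nonincreasing. This is the \(\partial_\tau H_s\) estimate from Proposition~\ref{prop:lower-energy}, used without \(S\). Hence \(\|H_s(t)f\|\lesssim\|f\|_{H^2}\langle s\rangle^{-2}\).
\item Feeding this into Proposition~\ref{prop:lower-energy} on \([t,s]\) gives the inputs \eqref{eq:scattering-decay-inputs} with constants independent of \(t\).
\item The normal form then gives \(\|F^{(t)}(s)-G\|\lesssim\|G\|_{H^2}\langle t\rangle^{-1}\).
\end{itemize}

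With that bound in hand, your closing step is valid: \(\mathcal S(\mathrm{data}_t)\to G\) together with \(\ran\mathcal S\) closed gives \(G\in\ran\mathcal S\). It is slightly shorter than the paper's Cauchy-family argument. Your Step~2 matches the paper.
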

	
	To prove these results, we first pass to coordinates moving with the shear and introduce a corrected velocity that absorbs the shear term in the magnetic-field equation. This correction is inspired by the adapted velocity used by Knobel for Couette flow \cite{knobel2026}. The time derivative of the natural physical \(L^2\) energy contains terms with no definite sign. To construct a non-increasing energy when \(U'\) is nonconstant, we draw on the variable-coefficient energy framework of our previous work \cite{li2026} and introduce a time-dependent pseudodifferential weight. This weight commutes with the sheared derivative, and its time variation produces favorable negative terms. The slow-variation assumption allows the resulting commutator errors involving \(U'\) to be absorbed. The resulting energy is equivalent to the physical \(L^2\) energy and is non-increasing. As in \cite{li2026}, we then construct terminal-time-dependent higher- and lower-order energies. Interpolation of these energy estimates yields upper bounds for growth and decay. Time reversibility and the Biot--Savart relations yield the matching lower bounds.
	
	For scattering, we derive an evolution system for the phase-corrected amplitudes \(F_\pm\). The direct estimates leave a cross-channel term with only \(t^{-1}\) decay, multiplied by the oscillatory factor \(e^{\mp2ik\beta t}\). A normal-form correction cancels this term. The lower-order energy estimate and the operator bounds give \(t^{-1}\) decay for the correction and \(t^{-2}\) decay for the time derivatives of the corrected amplitudes. This yields strong \(L^2\) scattering at rate \(t^{-1}\). The higher- and lower-order estimates then identify the leading asymptotics of the velocity, magnetic field, vorticity, and current density. Finally, uniform energy bounds extend the scattering map to \(\mathcal H\); the reverse energy bound ensures a closed range, and a terminal-data construction proves surjectivity.
	
	We fix some notation used throughout the paper. Set \(\langle \xi \rangle = \sqrt{1 + \xi^2}\). We define the Fourier transforms in the \(x\) and \(y\) directions by
	\[\mathcal F_x f(k) = \frac{1}{\sqrt{2\pi}} \int_{\mathbb T} e^{-ikx} f(x) dx, \qquad \mathcal F_y f(\eta) = \frac{1}{\sqrt{2\pi}} \int_{\mathbb R} e^{-iy\eta} f(y) dy.\]
	For \(s,r\in\mathbb R\), we use the Sobolev spaces \(H_x^sH_y^r\), equipped with the norm
	\[\| f \|_{H_x^sH_y^r}^2 = \sum_{k \in \mathbb Z} \int_{\mathbb R} \langle k \rangle^{2s} \langle \eta \rangle^{2r} |\mathcal F_x \mathcal F_y f(k,\eta)|^2 d\eta.\]
	For brevity, we write \(H_{x,y}^2\) for \(H^2(\mathbb T \times \mathbb R)\). We omit the underlying domain from other norms when it is clear from context. For a vector-valued function \(u = (u_1, \cdots, u_m)\), we use the same space notation and understand \(\| u \|_X^2 = \| u_1 \|_X^2 + \cdots + \| u_m \|_X^2\). Since the horizontal Fourier modes are decoupled in \eqref{eq:linear}, we fix a nonzero mode \(k \in \mathbb Z \setminus \{0\}\) throughout the mode-by-mode analysis. Unless otherwise specified, the norms \(\| \cdot \|_{L^2}\) and \(\| \cdot \|_{H^r}\), as well as the inner product \(\langle \cdot, \cdot \rangle\), are taken with respect to the \(y\)-variable.
	
	We introduce the following notations for inequalities between two nonnegative quantities \(A,B\). We write \(A \lesssim B\) if \(A \le CB\) for some constant \(C > 0\). Unless otherwise specified, the implicit constant may depend on \(U'_{\inf}\), \(U'_{\sup}\), and \(\beta\), but is independent of \(k, t\) and \(\epsilon\). The notation \(A \approx B\) means that \(A \lesssim B\) and \(B \lesssim A\) both hold.
	
	We next recall some notation for pseudodifferential operators; see, e.g., \cite{taylor1991, wong2014}. Let \(a(y,\eta)\) be a symbol, and denote by \(\op(a)\) its left quantization, defined by
	\[\op(a)f(y) = \frac{1}{\sqrt{2\pi}} \int_{\mathbb{R}} e^{iy\eta} a(y,\eta) \mathcal{F}_y f(\eta) d\eta.\]
	Conversely, given a pseudodifferential operator \(A\), we denote its symbol by \(\sigma(A)\). For two symbols \(a\) and \(b\), we define their symbolic composition by
	\[a \# b = \sigma(\op(a) \circ \op(b)),\]
	where \(\circ\) denotes the composition of operators. More explicitly,
	\begin{equation}
		\label{eq:composition_formula}
		(a \# b) (y,\eta) = \frac{1}{2\pi} \int_{\mathbb{R}} \int_{\mathbb{R}} e^{-iz\lambda} a(y,\eta+\lambda) b(y+z,\eta) d\lambda dz.
	\end{equation}
	All integrals of this type are understood as oscillatory integrals. In particular, if \(a = a(y)\) depends only on \(y\) or \(b = b(\eta)\) depends only on \(\eta\), then
	\[a \# b = ab.\]
	We denote the symbolic commutator by
	\[[a,b]=a \# b - b \# a.\]
	
	The rest of the paper is organized as follows. Section \ref{sec2} introduces the sheared coordinates and corrected velocity, derives the transformed equations, and establishes the basic weighted energy estimate. Section \ref{sec3} develops the higher- and lower-order energy estimates and uses them to prove stability, inviscid damping, and the sharp time rates. Section \ref{sec4} proves scattering and identifies the leading asymptotics. Section \ref{sec5} establishes asymptotic completeness.
	
	\section{The transformed system and the basic energy estimate}
	\label{sec2}
	
	In this section, we reformulate \eqref{eq:linear} in coordinates moving with the shear and introduce a corrected velocity. We then construct a weighted energy equivalent to the \(L^2\) energy of the perturbations and prove that it is non-increasing.
	
	\subsection{Sheared coordinates}
	
	Because the coefficients in \eqref{eq:linear} depend only on \(y\), the horizontal Fourier modes evolve independently, and the zero horizontal averages of \(v\) and \(b\) are preserved. Since \(v\) and \(b\) are divergence free, we introduce a stream function \(\psi\) and a magnetic potential \(\varphi\) such that
	\[v = \nabla^\perp \psi, \qquad b = \nabla^\perp \varphi.\]
	Then the vorticity and current density satisfy
	\[\omega = \Delta \psi, \qquad j = \Delta\varphi.\]
	
	We now take the Fourier transform of \eqref{eq:linear} in \(x\). The zero mode is absent, so for each \(k \neq 0\) the equations become
	\begin{equation}
		\label{eq:linear-k}
		\begin{dcases}
			\partial_t v_k + ikU v_k + U' v_{2,k} e_1 + \nabla_k p_k = ik \beta b_k,\\
			\partial_t b_k + ikU b_k - U' b_{2,k} e_1 = ik \beta v_k,\\
			\nabla_k \cdot v_k = \nabla_k \cdot b_k = 0,
		\end{dcases}
	\end{equation}
	where \(\nabla_k = (ik,\partial_y)\).
	
	To remove the transport term induced by the background shear, we define the unitary multiplication operator \(\mathcal U(t) = e^{ikU(y)t}\) and introduce
	\begin{equation}
		\label{eq:shear-transform}
		(V,B,P,\Omega,J,\Psi,\Phi) = \mathcal U(t) (v_k,b_k,p_k,\omega_k,j_k,\psi_k,\varphi_k).
	\end{equation}
	In the mode-by-mode analysis below, we suppress the mode index \(k\) on the transformed variables for simplicity. This is the Fourier representation of the change of coordinates \(x \mapsto x + U(y)t\). The corresponding sheared derivative, gradient, rotated gradient, and Laplacian are
	\[
	\begin{alignedat}{2}
		D_t &= \mathcal U(t) \partial_y \mathcal U(t)^{-1} = \partial_y - ikU'(y)t, \qquad& \nabla_t &= \mathcal U(t) \nabla_k \mathcal U(t)^{-1} = (ik,D_t),\\
		\nabla_t^\perp &= \mathcal U(t) \nabla_k^\perp \mathcal U(t)^{-1} = (-D_t,ik), \qquad& \Delta_t &= \mathcal U(t) \Delta_k \mathcal U(t)^{-1} = D_t^2-k^2,
	\end{alignedat}
	\]
	where \(\nabla_k^\perp = (-\partial_y,ik)\).
	
	Applying \(\mathcal U(t)\) to \eqref{eq:linear-k} gives
	\begin{equation}
		\label{eq:transformed-linear}
		\begin{dcases}
			\partial_t V + U' V_2 e_1 + \nabla_t P = ik \beta B,\\
			\partial_t B - U' B_2 e_1 = ik \beta V,\\
			\nabla_t \cdot V = \nabla_t \cdot B = 0.
		\end{dcases}
	\end{equation}
	The potential representation becomes
	\begin{equation}
		\label{eq:potential-representation}
		V = \nabla_t^\perp \Psi, \qquad B = \nabla_t^\perp \Phi, \qquad \Omega = \nabla_t^\perp \cdot V = \Delta_t\Psi, \qquad J = \nabla_t^\perp \cdot B = \Delta_t \Phi.
	\end{equation}
	
	\subsection{Corrected velocity}
	
	The second component of the magnetic equation in \eqref{eq:transformed-linear}, together with \(B_2 = ik\Phi\), gives
	\[ik \partial_t \Phi = \partial_t B_2 = ik \beta V_2.\]
	Since \(k \neq 0\), we have
	\[\partial_t \Phi = \beta V_2.\]
	This suggests modifying \(V\) to absorb the terms involving \(U'\) in both evolution equations. We therefore introduce the corrected velocity
	\[\widetilde V = V + \frac{U'}{\beta} \Phi e_1.\]
	It follows that \eqref{eq:transformed-linear} becomes
	\begin{equation}
		\label{eq:corrected-system}
		\begin{dcases}
			\partial_t \widetilde V + \nabla_t P = ik \beta B,\\
			\partial_t B = ik \beta \widetilde V,\\
			\nabla_t \cdot \widetilde V = \frac{U'}{\beta} B_2, \qquad \nabla_t \cdot B = 0.
		\end{dcases}
	\end{equation}
	
	To obtain an equation for the pressure, we differentiate the constraint \(\nabla_t \cdot V = 0\) in time and obtain
	\[\nabla_t \cdot \partial_t V = ikU'V_2,\]
	where we used \(\partial_t D_t = -ikU'\). Taking the divergence of the velocity equation in \eqref{eq:transformed-linear} then gives
	\begin{equation}
		\label{eq:pressure}
		\Delta_t P = -2ikU' V_2 = -2ikU' \widetilde V_2.
	\end{equation}
	
	\subsection{The basic energy}
	
	We introduce the time-dependent operator
	\begin{equation}
		\label{eq:basic-weight}
		S = \mathcal U(t) \op \left( \exp \left( \gamma \arctan \frac{\eta}{k} \right) \right) \mathcal U(t)^{-1},
	\end{equation}
	where
	\[\gamma = 1 + \frac{(U'_{\sup})^2}{|\beta| U'_{\inf}}.\]
	We define the basic energy by
	\[\mathcal E = \frac{1}{2} \| S \widetilde V \|_{L^2}^2 + \frac{1}{2} \| S B \|_{L^2}^2,\]
	and its associated dissipation by
	\[\mathcal D = \left\| k (-\Delta_t)^{-1/2} S \widetilde V \right\|_{L^2}^2 + \left\| k (-\Delta_t)^{-1/2} S B \right\|_{L^2}^2.\]
	The basic energy estimate is as follows.
	
	\begin{proposition}[Basic energy estimate]
		\label{prop:basic-energy}
		Under the assumptions on \(U\) and \(\beta\) in Theorem \ref{thm:stability}, let \((V,B)\) solve \eqref{eq:transformed-linear}. Then
		\begin{equation}
			\label{eq:basic-energy-equivalence}
			\mathcal E \approx \| V \|_{L^2}^2 + \| B \|_{L^2}^2
		\end{equation}
		uniformly in \(t \ge 0\) and \(k \neq 0\), and
		\begin{equation}
			\label{eq:basic-energy-decay}
			\frac{d}{dt} \mathcal E \le -\frac{U'_{\inf}}{2} \mathcal D \le 0.
		\end{equation}
		Consequently,
		\begin{equation}
			\label{eq:mode-uniform-bound}
			\| (V,B) \|_{L^2} (t) \lesssim \| (V,B) \|_{L^2} (0).
		\end{equation}
	\end{proposition}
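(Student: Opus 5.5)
The plan is to prove the equivalence \eqref{eq:basic-energy-equivalence} from the uniform ellipticity of the weight and a simple comparison of \(\widetilde V\) with \(V\). The decay \eqref{eq:basic-energy-decay} comes from a direct energy computation: the time derivative of the weight produces a negative term, and the pressure term is estimated against it. The bound \eqref{eq:mode-uniform-bound} then follows by integrating \eqref{eq:basic-energy-decay} and using \eqref{eq:basic-energy-equivalence} at times \(0\) and \(t\).

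\emph{Equivalence.} Set \(g(\eta)=\exp(\gamma\arctan(\eta/k))\). Then \(e^{-\gamma\pi/2}\le g\le e^{\gamma\pi/2}\). Since \(g\) depends only on \(\eta\), \(\op(g)\) is a Fourier multiplier, so it is self-adjoint, bounded, and boundedly invertible on \(L^2\). Conjugation by the unitary operator \(\mathcal U(t)\) preserves these properties. Hence \(\|SW\|_{L^2}\approx\|W\|_{L^2}\) uniformly in \(t\) and \(k\). The same conjugation shows that \(S\) commutes exactly with \(D_t\), with \(\Delta_t\), and with \((-\Delta_t)^{-1/2}\).

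Next, \(\widetilde V-V=\frac{U'}{\beta}\Phi e_1\) with \(\Phi=B_2/(ik)\). This gives \(\|\widetilde V-V\|_{L^2}\le \frac{U'_{\sup}}{|\beta||k|}\|B\|_{L^2}\le \frac{U'_{\sup}}{|\beta|}\|B\|_{L^2}\), so \(\|\widetilde V\|+\|B\|\approx\|V\|+\|B\|\). Combining the two comparisons yields \eqref{eq:basic-energy-equivalence}.

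\emph{Energy identity.} Differentiating \(\mathcal E\) and using \eqref{eq:corrected-system} gives three kinds of terms: weight terms \(\re\langle (\partial_tS)W,SW\rangle\) for \(W=\widetilde V,B\), the Alfv\'en terms, and the pressure term \(-\re\langle S\widetilde V,S\nabla_tP\rangle\).

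The Alfv\'en terms cancel. Since \(S\) is self-adjoint and commutes with the scalar \(ik\beta\), they combine to \(\re\bigl(ik\beta\langle SB,S\widetilde V\rangle+ik\beta\langle S\widetilde V,SB\rangle\bigr)=\re\bigl(2ik\beta\re\langle SB,S\widetilde V\rangle\bigr)=0\).

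For the weight term, write \(\partial_tS=\mathcal U(t)[ikU,\op(g)]\mathcal U(t)^{-1}\). By the composition formula \eqref{eq:composition_formula}, the principal symbol of \([ikU,\op(g)]\) is \(-kU'(y)g'(\eta)\), and
\[
g'(\eta)=\frac{\gamma k}{k^2+\eta^2}\,g(\eta).
\]
So \(\partial_tS\) equals \(-\gamma U' k^2(-\Delta_t)^{-1}S\) plus remainders. Each remainder carries \(U''\) or higher derivatives, so it is \(O(\epsilon)\) and has a symbol bounded by \(k^2/(k^2+\eta^2)\). Therefore
\[
\re\langle(\partial_tS)W,SW\rangle\le-\gamma U'_{\inf}\bigl\|k(-\Delta_t)^{-1/2}SW\bigr\|_{L^2}^2+C\epsilon\,\mathcal D,
\]
where the error term is absorbed into \(\mathcal D\).

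For the pressure term, integrate by parts with \(\nabla_t\): it becomes \(\re\langle \nabla_t\cdot S^2\widetilde V,P\rangle\). Commuting \(S^2\) past \(\nabla_t\) and using \(\nabla_t\cdot\widetilde V=\frac{U'}{\beta}ik\Phi\), this is \(\re\langle S^2(\tfrac{U'}{\beta}ik\Phi),P\rangle\). By \eqref{eq:pressure}, \(P=-2ik\Delta_t^{-1}(U'\widetilde V_2)\). Next move the multiplications by \(U'\) through \(S\), \((-\Delta_t)^{-1/2}\), and \(\Delta_t^{-1}\), paying commutators of size \(O(\epsilon)\) times the same \(k(-\Delta_t)^{-1/2}\) gain.

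Up to those errors, the pressure term is then bounded by
\[
\frac{2(U'_{\sup})^2}{|\beta|}\,\bigl\|k(-\Delta_t)^{-1/2}SB\bigr\|_{L^2}\,\bigl\|k(-\Delta_t)^{-1/2}S\widetilde V\bigr\|_{L^2}
\le\frac{(U'_{\sup})^2}{|\beta|}\,\mathcal D .
\]
Here I use \(\|k(-\Delta_t)^{1/2}\Phi\|=\|kB\|\), together with \(|k|\,|\Phi|\)-type bounds, to place one factor \(k(-\Delta_t)^{-1/2}\) on \(B\) and one on \(\widetilde V\).

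With \(\gamma U'_{\inf}=U'_{\inf}+(U'_{\sup})^2/|\beta|\), the three contributions sum to
\[
\frac{d}{dt}\mathcal E\le -U'_{\inf}\mathcal D+C\epsilon\,\mathcal D .
\]
Choosing \(\epsilon\) small gives \eqref{eq:basic-energy-decay}.

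\emph{Main obstacle.} The hardest step is controlling all commutators between \(S\) (or \(\partial_tS\)) and multiplication by \(U'\) uniformly in both \(t\) and \(k\). In the sheared frame the symbol looks like \(g(\eta-kU't)\), which suggests errors growing in \(t\). I will avoid this by always conjugating back with \(\mathcal U(t)\): there \(\op(g)\) is time-independent, and \(U'\) commutes with \(\mathcal U(t)\). Every remainder then has the form \(\op(\partial_\eta^m g)\,\partial_y^mU'\) with \(m\ge1\). Since \(|\partial_\eta^m g|\lesssim |k|^{-m}\langle\eta/k\rangle^{-2}\), each such remainder is bounded by \(\epsilon\,k^2(k^2+\eta^2)^{-1}\), which is exactly dominated by the dissipation \(\mathcal D\). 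The asymptotic expansions are truncated at order \(n\), and the tail is handled with the \(W^{n,\infty}\) bound on \(U''\).
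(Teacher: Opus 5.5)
Your proposal is correct and follows essentially the same route as the paper. It uses the same multiplier comparison and the bound $\|\widetilde V - V\|_{L^2}\lesssim \|B\|_{L^2}$ for the equivalence, and the same principal part $-\gamma U' k^2(-\Delta_t)^{-1}$ of the weight derivative with an $O(\epsilon)$ remainder measured by $k(-\Delta_t)^{-1/2}$ on both sides. It then obtains the same pressure bound $\frac{(U'_{\sup})^2}{|\beta|}\mathcal D$ by conjugating $U'$ through $(-\Delta_t)^{-1/2}S$, and closes with the same identity $\gamma U'_{\inf} = U'_{\inf} + (U'_{\sup})^2/|\beta|$.

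The one step you state loosely is the principal weight term. Because $U'$ and $(-\Delta_t)^{-1}$ do not commute, the bound $\re\langle U' k^2(-\Delta_t)^{-1} f, f\rangle \ge (U'_{\inf} - C\epsilon)\|k(-\Delta_t)^{-1/2} f\|_{L^2}^2$ requires symmetrizing via $\|(-\Delta_t)^{1/2}U'(-\Delta_t)^{-1/2} - U'\|_{L^2\to L^2}\lesssim\epsilon$. This is the same kind of conjugation bound you already invoke for the pressure term, and it is how the paper handles this step.
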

	
	To motivate the choice of \(S\), consider the unweighted \(L^2\) energy of \((\widetilde V,B)\). The corrected evolution equations in \eqref{eq:corrected-system} have no explicit shear terms, and the terms proportional to \(ik\beta\) cancel after taking real parts. A calculation similar to that in the proof of Proposition \ref{prop:basic-energy} shows that
	\[\frac{d}{dt} \frac{1}{2} \left( \| \widetilde V \|_{L^2}^2 + \| B \|_{L^2}^2 \right) = \frac{2}{\beta} \re \langle U' B_2, ik (-\Delta_t)^{-1} (U' \widetilde V_2) \rangle.\]
	The right-hand side has no definite sign. To control it, we seek a time-dependent weight \(S\) whose derivative produces a negative term with the same \((-\Delta_t)^{-1}\) structure. At the level of principal symbols, we require
	\[(\partial_t S) S^{-1} \approx - \gamma U' k^2 (-\Delta_t)^{-1}.\]
	Formally, this suggests taking
	\[S = \exp \left( \gamma \arctan \left( \frac{D_t}{ik} \right) \right),\]
	which agrees with the definition in \eqref{eq:basic-weight} and commutes with \(D_t\). The slow-variation assumption makes the resulting remainders small enough to be absorbed in the energy estimate.
	
	\subsection{Technical lemmas}
	
	To prove Proposition \ref{prop:basic-energy}, we first recall the following Calder\'on--Vaillancourt estimate \cite{hwang1987}.
	
	\begin{theorem}[Calder\'on--Vaillancourt]
		\label{thm:CV}
		Let \(a : \mathbb R \times \mathbb R\to\mathbb C\) be a continuous symbol satisfying
		\[\| \partial_y^j \partial_\eta^l a \|_{L_y^\infty L_\eta^\infty} < \infty, \qquad j,l \in \{0,1\}.\]
		Then
		\[\| \op(a) \|_{L^2 \to L^2} \lesssim \max_{j, l \in \{0,1\}} \| \partial_y^j \partial_\eta^l a \|_{L_y^\infty L_\eta^\infty}.\]
	\end{theorem}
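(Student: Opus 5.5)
This is the one-dimensional Calder\'on--Vaillancourt theorem with only one derivative in each variable, as proved by Hwang. I would follow his integration-by-parts argument, with no almost-orthogonality step. The idea is to trade the oscillatory factor \(e^{iy\eta}\) for products of \(L^2(\mathbb R^2)\) functions built from \(f\) and \(g\), paid for by at most one derivative of \(a\) in each of \(y\) and \(\eta\).

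\textbf{Reduction.} By density it suffices to treat \(f,g\) Schwartz and to bound
\[\langle \op(a) f, g\rangle = \frac{1}{\sqrt{2\pi}} \iint a(y,\eta)\, e^{iy\eta}\, \hat f(\eta)\, \overline{g(y)}\, dy\, d\eta,\]
with \(\hat f=\mathcal F_y f\), by \(M\|f\|_{L^2}\|g\|_{L^2}\). Here \(M=\max_{j,l\in\{0,1\}}\|\partial_y^j\partial_\eta^l a\|_{L^\infty}\). Assume \(a\) is first mollified and truncated; the constant will not depend on the regularization, so the bound passes to the limit by a weak-limit argument.

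\textbf{Auxiliary transforms.} Set \(h(s)=(1+is)^{-1}\); then \(h\in L^2(\mathbb R)\) and \(\|h\|_{L^2}^2=\pi\). Define
\[\Phi_f(y,\eta)=\int e^{iy\lambda}\, h(\eta-\lambda)\, \hat f(\lambda)\, d\lambda, \qquad \Gamma_g(y,\eta)=\int e^{-iz\eta}\, h(y-z)\, g(z)\, dz.\]
By Plancherel in \(y\) (respectively in \(\eta\)) followed by Fubini,
\[\|\Phi_f\|_{L^2(\mathbb R^2)}\lesssim\|f\|_{L^2}, \qquad \|\Gamma_g\|_{L^2(\mathbb R^2)}\lesssim\|g\|_{L^2}.\]
The same bound holds for the modified transforms in which \(h\) is replaced by \(sh(s)\), provided that expression is used only after one integration by parts, so that the kernel regains \(L^2\) decay.

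\textbf{Integration by parts.} The first-order operators \(L_\eta=1-\partial_\eta\) and \(L_y=1-\partial_y\) act on the phase: \(L_\eta e^{iy\eta}=(1-iy)e^{iy\eta}\), and similarly in \(y\). Inserting these identities and integrating by parts once in \(y\) and once in \(\eta\) should rewrite the pairing as a finite sum
\[\sum_{j,l\in\{0,1\}} c_{jl}\iint \partial_y^j\partial_\eta^l a(y,\eta)\; \Phi^{(l)}_f(y,\eta)\; \overline{\Gamma^{(j)}_g(y,\eta)}\; e^{i\theta(y,\eta)}\, dy\, d\eta,\]
where \(|e^{i\theta}|=1\) and each \(\Phi^{(l)}_f\), \(\Gamma^{(j)}_g\) is a transform of the above type with an \(L^2\) kernel. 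Cauchy--Schwarz in \((y,\eta)\) then gives
\[|\langle\op(a)f,g\rangle|\lesssim M\,\|f\|_{L^2}\|g\|_{L^2},\]
which is the claim.

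\textbf{Main obstacle.} The hard step is choosing the factorization of \(e^{iy\eta}\hat f(\eta)\overline{g(y)}\) so that exactly one derivative falls on \(a\) in each variable and every kernel remains square integrable. A naive use of \(\langle y\rangle^{-1}\) weights fails, because \(\langle y\rangle^{-1}\notin L^1\). I would first try Hwang's exact identity: write
\[\hat f(\eta)=\frac{1}{2\pi}\int \bigl(1-\partial_\eta\bigr)\!\left[e^{-iz\eta}\,h(z)\right]\cdots\]
and integrate by parts onto \(a\), repeating the same step in \(y\) for \(g\). I would then check directly that each resulting kernel is a Fourier transform of \(h\) or of a bounded multiple of \(h\), so that the \(L^2\) bounds above apply.
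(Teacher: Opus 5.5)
The paper does not prove this theorem; it cites Hwang, and Hwang's integration-by-parts argument is the right route. Your proposal, however, has a genuine gap at the one step that carries the proof. You never write down the identity that turns $e^{iy\eta}\hat f(\eta)\overline{g(y)}$ into one $\eta$-derivative and one $y$-derivative of products of $L^2(\mathbb R^2)$ functions. Moreover, that identity cannot be built from the transforms you define, because they are attached to the wrong functions. Since $(1+is)h(s)=1$, your transforms satisfy
\[
(1+i\eta-\partial_y)\,\Phi_f=\sqrt{2\pi}\,f(y),\qquad (1+iy+\partial_\eta)\,\Gamma_g=\sqrt{2\pi}\,\hat g(\eta).
\]
So $\Phi_f$ reproduces $f(y)$ through a $y$-derivative, and $\Gamma_g$ reproduces $\hat g(\eta)$ through an $\eta$-derivative, whereas the pairing contains $\hat f(\eta)$ and $g(y)$. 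No first-order operator in $\eta$ recovers $e^{iy\eta}\hat f(\eta)$ from $\Phi_f$. Indeed, $\Phi_f(y,\cdot)$ is the convolution of $e^{iy\lambda}\hat f(\lambda)$ with the smooth kernel $h$, and $\alpha+\beta\partial_\eta$ only replaces $h$ by the smooth kernel $\alpha h+\beta h'$, never by a Dirac mass. Your last display has a related defect as written: the window $h(z)$ is not centered at $y$, so the resulting transform depends on $\eta$ alone and is not in $L^2(\mathbb R^2)$. Finally, kernels $sh(s)=-i+ih(s)$ are not square integrable, and they never arise in the correct argument; the only new kernel is $h'(s)=-i(1+is)^{-2}\in L^2$.

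The repair is to exchange the roles of the two transforms; your assignment of $\eta$-derivatives to $f$ and $y$-derivatives to $g$ was right. Set $\Gamma_f(y,\eta)=\int e^{-iz\eta}h(y-z)f(z)\,dz$ and $\Phi_g(y,\eta)=\int e^{iy\lambda}h(\eta-\lambda)\hat g(\lambda)\,d\lambda$. The identities above then read
\[
e^{iy\eta}\hat f(\eta)=\frac{1}{\sqrt{2\pi}}(1+\partial_\eta)\bigl(e^{iy\eta}\Gamma_f\bigr),\qquad e^{-iy\eta}g(y)=\frac{1}{\sqrt{2\pi}}(1-\partial_y)\bigl(e^{-iy\eta}\Phi_g\bigr).
\]
First insert the first identity and integrate by parts in $\eta$; this is harmless because $\overline{g(y)}$ does not depend on $\eta$. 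Then insert the conjugate of the second identity and integrate by parts in $y$. This gives
\[
\langle \op(a)f,g\rangle=(2\pi)^{-3/2}\iint\Bigl(\bigl[(1+\partial_y)(1-\partial_\eta)a\bigr]\Gamma_f+\bigl[(1-\partial_\eta)a\bigr]\partial_y\Gamma_f\Bigr)e^{iy\eta}\,\overline{\Phi_g}\,dy\,d\eta ,
\]
where $\partial_y\Gamma_f$ is the same transform with kernel $h'$. Cauchy--Schwarz and your Plancherel bounds, which apply verbatim to $\Gamma_f$, $\partial_y\Gamma_f$ and $\Phi_g$, finish the proof. The weights enter only through the relative variables $y-z$ and $\eta-\lambda$ and are measured in $L^2$ via Plancherel, so the non-integrability of $\langle y\rangle^{-1}$ that worried you plays no role.

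Alternatively, your transforms exactly as defined do work for the adjoint pairing
\[
\langle\op(a)^*f,g\rangle=\frac{1}{\sqrt{2\pi}}\iint\bar a\,e^{-iy\eta}f(y)\overline{\hat g(\eta)}\,dy\,d\eta,
\]
provided you integrate by parts in $y$ first. Since $\|\op(a)^*\|=\|\op(a)\|$, this also suffices, but that duality step has to be stated.
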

	
	We next establish two technical lemmas. The first identifies the principal part of \((\partial_t S)S^{-1}\).
	
	\begin{lemma}[Time derivative of the basic weight]
		\label{lem:weight-derivative}
		Under the hypotheses on \(U\) and \(\beta\) of Theorem \ref{thm:stability}, we have the following decomposition:
		\[\sigma \left( \mathcal U^{-1} (\partial_t S) S^{-1} \mathcal U \right) (y,\eta) = - \gamma U'(y) \left \langle \frac{\eta}{k} \right \rangle^{-2} + r_S(y,\eta),\]
		where the remainder \(r_S\) satisfies
		\[\left\| \op \left( \left\langle \frac{\eta}{k} \right\rangle \# r_S \# \left\langle \frac{\eta}{k} \right\rangle \right) \right\|_{L^2 \to L^2} \lesssim \epsilon.\]
	\end{lemma}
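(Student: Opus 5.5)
Conjugating back by \(\mathcal U\) removes the time dependence from the quantization. Write \(S_0(t) = \mathcal U(t)^{-1} S \,\mathcal U(t)\). Then \(S = \mathcal U S_0 \mathcal U^{-1}\), and differentiating gives
\[
\mathcal U^{-1} (\partial_t S) S^{-1} \mathcal U = [\mathcal U^{-1}\partial_t \mathcal U, S_0] S_0^{-1} = [ikU, S_0]S_0^{-1}.
\]
Here \(S_0 = \op(e^{\gamma\theta(\eta)})\) with \(\theta(\eta) = \arctan(\eta/k)\), and \(S_0^{-1} = \op(e^{-\gamma\theta})\) is an exact Fourier multiplier. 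Two features matter:

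\begin{itemize}
\item Since \(ikU\) is a multiplication operator and the weight depends only on \(\eta\), we get \(\sigma([ikU,S_0]S_0^{-1}) = (ikU\# e^{\gamma\theta} - e^{\gamma\theta}\# ikU)\, e^{-\gamma\theta}\). By the remark after \eqref{eq:composition_formula}, \(ikU \# e^{\gamma\theta} = ikU e^{\gamma\theta}\), so the whole computation reduces to \(e^{\gamma\theta}\# U\).
\item \(U\) itself is unbounded, but only its derivatives enter, so the symbol is \(t\)-independent and we need only control \(U'\) and \(U''\).
\end{itemize}

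Next I expand \(e^{\gamma\theta}\# U\) by Taylor's formula in the oscillatory integral \eqref{eq:composition_formula}, with \(a = e^{\gamma\theta}\) and \(b = U\). Writing \(U(y+z) = U(y) + zU'(y) + z^2\int_0^1 (1-s)U''(y+sz)\,ds\) and integrating by parts in \(\lambda\) (\(z e^{-iz\lambda} = i\partial_\lambda e^{-iz\lambda}\)) gives
\[
e^{\gamma\theta}\# U = U e^{\gamma\theta} - iU'\partial_\eta e^{\gamma\theta} + R,
\]
where \(R\) is an exact integral remainder involving \(\partial_\eta^2 e^{\gamma\theta}\) and \(U''\). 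Since \(\partial_\eta e^{\gamma\theta} = \gamma k^{-1}\langle \eta/k\rangle^{-2}e^{\gamma\theta}\), the principal part of \(ik(U e^{\gamma\theta} - e^{\gamma\theta}\# U)e^{-\gamma\theta}\) is
\[
ik\cdot iU'\gamma k^{-1}\langle\eta/k\rangle^{-2} = -\gamma U'\langle \eta/k\rangle^{-2},
\]
as claimed. The remainder is therefore
\[
r_S = -ik\,R\#e^{-\gamma\theta} = -ik\,R\,e^{-\gamma\theta},
\]
which is again exact because \(e^{-\gamma\theta}\) depends only on \(\eta\).

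It remains to bound \(\langle \eta/k\rangle \# r_S \# \langle\eta/k\rangle = \langle\eta/k\rangle\#(r_S\langle\eta/k\rangle)\).
\begin{itemize}
\item \emph{Symbol of \(R\).} \(R\) is, up to constants, \(\int_0^1(1-s)\,\op(\partial_\eta^2 e^{\gamma\theta})\)-type compositions with \(U''(\cdot+sz)\). Each \(\eta\)-derivative of \(e^{\gamma\theta}\) gains a factor \(k^{-1}\langle\eta/k\rangle^{-1}\). So \(\partial_\eta^2 e^{\gamma\theta} = O(k^{-2}\langle\eta/k\rangle^{-2})\), and this holds uniformly for all further \(\partial_\eta^l\) (with extra decay) because \(e^{\pm\gamma\theta}\in[e^{-\gamma\pi/2},e^{\gamma\pi/2}]\).
\item \emph{Gains.} \(r_S\) thus carries a factor \(k^{-1}\langle\eta/k\rangle^{-2}\) times \(U''\). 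Multiplying by the two weights \(\langle\eta/k\rangle\) leaves \(k^{-1}\cdot\)bounded, and the \(ik\) prefactor cancels the \(k^{-1}\). The outcome is a symbol bounded by \(\|U''\|_{L^\infty}\lesssim\epsilon\), uniformly in \(k\).
\item \emph{Left weight.} Composing on the left with \(\langle\eta/k\rangle\) is not a pure multiplication. I rewrite \(\langle\eta/k\rangle\# q\) via the same Taylor/integration-by-parts expansion in \(z\). This produces \(y\)-derivatives of \(q\), hence \(U'''\), and more generally \(\partial_y^j U''\), all controlled by \(\|U''\|_{W^{n,\infty}}\le\epsilon\). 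These come paired with \(\eta\)-derivatives of \(\langle\eta/k\rangle\), which are \(O(k^{-1})\) and bounded.
\end{itemize}
Finally, Theorem \ref{thm:CV} requires bounds on \(\partial_y^j\partial_\eta^l\) of the full symbol for \(j,l\le1\). The \(\eta\)-derivatives only improve decay. The \(y\)-derivatives fall on \(U''\) and use the \(W^{n,\infty}\) smallness.

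The main obstacle is making the remainder symbol estimates rigorous and uniform in \(k\). The oscillatory integral for \(R\) is not absolutely convergent, so I would handle it by repeated integration by parts in \(z\) and \(\lambda\) to obtain decay of the kernel. Because \(n\) is large, there are enough derivatives of \(U''\) to pay for the \(\langle z\rangle^{-N}\) factors, and the estimates gained in \(\lambda\) use the symbol class \(|\partial_\eta^l e^{\pm\gamma\theta}|\lesssim |k|^{-l}\langle\eta/k\rangle^{-l}\). To keep this uniform in \(k\), I would first rescale \(\eta = k\zeta\), \(y = y'/k\). This turns the weights into \(k\)-independent symbols, and \(U''\) becomes \(k^{-2}U''(\cdot/k)\). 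Its derivatives are still \(O(\epsilon)\) after the \(k\) factors are tracked, and this tracking of the \(k\) factors is the delicate bookkeeping step.
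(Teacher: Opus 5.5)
Your proposal is correct and follows essentially the paper's proof. Both compute $ik[U,e^{\gamma\arctan(\eta/k)}]e^{-\gamma\arctan(\eta/k)}$ through the composition formula, use a first-order expansion to isolate $-\gamma U'\langle \eta/k\rangle^{-2}$, and leave an oscillatory-integral remainder built from $U''$ and $\partial_\eta^2$ of the weight. That remainder is bounded by integration by parts with Peetre's inequality, which moves the decay from $\eta+\lambda$ to $\eta$; the resulting polynomial loss in $\lambda$ is paid by the factor $\langle\lambda\rangle^{-2N}$ obtained by putting $z$-derivatives on $U''$, not a $\langle z\rangle^{-N}$ factor as you wrote. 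The proof then finishes with Calder\'on--Vaillancourt.

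The differences are cosmetic. You Taylor-expand $U(y+z)$ in $z$ rather than the weight in $\lambda$; the two remainders agree after the substitution $z\mapsto z/s$, $\lambda\mapsto s\lambda$. The dilation you propose for uniformity in $k$ is unnecessary, since $|k|\ge 1$ already gives $\langle \theta\lambda/k\rangle\le\langle\lambda\rangle$ for $\theta\in[0,1]$ in Peetre's inequality.
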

	
	\begin{proof}
		Since \(\partial_t \mathcal U = ikU \mathcal U\), differentiating the definition of \(S\) gives
		\[\partial_t S = ikU S - ik S U = ik [U,S].\]
		Conjugating by \(\mathcal U^{-1}\) and using \(U \mathcal U = \mathcal U U\), we have
		\[\sigma \left( \mathcal U^{-1} (\partial_t S) S^{-1} \mathcal U \right) (y,\eta) = ik \left[U(y), \exp \left( \gamma \arctan \frac{\eta}{k} \right) \right] \exp \left( - \gamma \arctan \frac{\eta}{k} \right).\]
		
		Applying the symbolic composition formula \eqref{eq:composition_formula}, we obtain
		\[U(y) \# \exp \left( \gamma \arctan \frac{\eta}{k} \right) = U(y) \exp \left( \gamma \arctan \frac{\eta}{k} \right),\]
		and
		\[\exp \left( \gamma \arctan \frac{\eta}{k} \right) \# U(y) = \frac{1}{2\pi} \int_{\mathbb R} \int_{\mathbb R} e^{-iz \lambda} \exp \left( \gamma \arctan \frac{\eta + \lambda}{k} \right) U(y+z) d \lambda dz.\]
		We expand the exponential factor to first order in \(\lambda\), retaining the integral remainder:
		\[\exp \left( \gamma \arctan \frac{\eta + \lambda}{k} \right) = \exp \left( \gamma \arctan \frac{\eta}{k} \right) + \lambda \partial_\eta \exp \left( \gamma \arctan \frac{\eta}{k} \right) + \lambda^2 \int_{0}^{1} (1-\theta) \partial_\eta^2 \exp \left( \gamma \arctan \frac{\eta + \theta \lambda}{k} \right) d \theta.\]
		Substituting this expansion into the preceding formula and integrating by parts in \(z\), we obtain
		\begin{align*}
			\exp \left( \gamma \arctan \frac{\eta}{k} \right) \# U(y)
			=& U(y) \exp \left( \gamma \arctan \frac{\eta}{k} \right) -i U'(y) \partial_\eta \exp \left( \gamma \arctan \frac{\eta}{k} \right) \\
			&- \frac{1}{2\pi} \int_{\mathbb R} \int_{\mathbb R} \int_{0}^{1} e^{-iz\lambda} U''(y+z) (1-\theta) \partial_\eta^2 \exp \left( \gamma \arctan \frac{\eta + \theta \lambda}{k} \right) d \theta d \lambda dz.
		\end{align*}
		
		Thus we have the following decomposition:
		\[\sigma \left( \mathcal U^{-1} (\partial_t S) S^{-1} \mathcal U \right) (y,\eta) = - \gamma U'(y) \left \langle \frac{\eta}{k} \right \rangle^{-2} + r_S(y,\eta),\]
		where
		\begin{equation}
			\label{eq:r-remainder}
			r_S(y,\eta) = \frac{ik}{2\pi} \int_{\mathbb R} \int_{\mathbb R} \int_{0}^{1} e^{-iz\lambda} U''(y+z) (1-\theta) \left( \partial_\eta^2 \exp \left( \gamma \arctan \frac{\eta + \theta \lambda}{k}\right) \right) \exp \left( - \gamma \arctan \frac{\eta}{k} \right) d\theta d\lambda dz.
		\end{equation}
		
		To establish the operator bound in the lemma, we first estimate \(r_S\). Integrating by parts in the oscillatory variables, we rewrite \eqref{eq:r-remainder} as
		\begin{align*}
			r_S(y,\eta) =& \frac{ik}{2\pi} \int_{\mathbb R} \int_{\mathbb R} \int_{0}^{1} e^{-iz\lambda} \langle z \rangle^{-2} (1 - \partial_\lambda^2) \bigg[ \langle \lambda \rangle^{-4} (1 - \partial_z^2)^2 U''(y+z) \\
			&\times (1-\theta) \left( \partial_\eta^2 \exp \left( \gamma \arctan \frac{\eta + \theta \lambda}{k}\right) \right) \exp \left( - \gamma \arctan \frac{\eta}{k} \right) \bigg] d\theta d\lambda dz.
		\end{align*}
		Direct differentiation and the boundedness of \(\arctan\) imply, for \(m \in \{0, 1, 2\}\),
		\[\left| \partial_\lambda^m \partial_\eta^2 \exp \left( \gamma \arctan \frac{\eta + \theta \lambda}{k} \right) \right| \lesssim |k|^{-2} \left\langle \frac{\eta + \theta \lambda}{k} \right \rangle^{-2}, \qquad \left| \exp \left( - \gamma \arctan \frac{\eta}{k} \right) \right| \lesssim 1.\]
		By Peetre's inequality, we obtain
		\[\left\langle \frac{\eta + \theta \lambda}{k} \right\rangle^{-2} \lesssim \left\langle \frac{\eta}{k} \right\rangle^{-2} \left\langle \frac{\theta \lambda}{k} \right\rangle^2 \lesssim \left\langle \frac{\eta}{k} \right\rangle^{-2} \langle \lambda \rangle^2.\]
		Thus
		\[|r_S(y,\eta)| \lesssim \epsilon |k|^{-1} \int_{\mathbb R} \int_{\mathbb R} \langle z \rangle^{-2} \langle \lambda \rangle^{-4} \left\langle \frac{\eta}{k} \right\rangle^{-2} \langle \lambda \rangle^2 d \lambda dz \lesssim \epsilon \left\langle \frac{\eta}{k} \right\rangle^{-2}.\]
		The same argument applies after differentiating \eqref{eq:r-remainder}: the \(y\)-derivatives fall on \(U''\), while the \(\eta\)-derivatives of the exponential factors satisfy the same type of bounds. Hence, for \(j, l \in \{0, \cdots, 5\}\),
		\begin{equation}
			\label{eq:r-symbol-bound}
			\left| \partial_y^j \partial_\eta^l r_S(y,\eta) \right| \lesssim \epsilon \left \langle \frac{\eta}{k} \right\rangle^{-2}.
		\end{equation}
		
		Applying \eqref{eq:composition_formula} once more, we have
		\[\left\langle \frac{\eta}{k} \right\rangle \# r_S(y,\eta) \# \left\langle \frac{\eta}{k} \right\rangle = \frac{1}{2\pi} \int_{\mathbb R} \int_{\mathbb R} e^{-iz\lambda} \left\langle \frac{\eta + \lambda}{k} \right\rangle r_S(y+z,\eta) \left\langle \frac{\eta}{k} \right\rangle d\lambda dz.\]
		Peetre's inequality, \eqref{eq:r-symbol-bound}, and integration by parts in the oscillatory variables give
		\[\left| \partial_y^j \partial_\eta^l \left( \left\langle \frac{\eta}{k} \right\rangle \# r_S(y,\eta) \# \left\langle \frac{\eta}{k} \right\rangle \right) \right| \lesssim \epsilon, \qquad j,l \in \{0,1\}.\]
		The Calder\'on--Vaillancourt Theorem \ref{thm:CV} yields the claimed operator bound.
	\end{proof}
	
	The next lemma controls the conjugation of \(U'\) by a Fourier multiplier.
	
	\begin{lemma}[Conjugation of the shear coefficient]
		\label{lem:shear-conjugation}
		Under the assumptions on \(U\) in Theorem \ref{thm:stability}, let \(a = a(\eta) > 0\) be a Fourier symbol and set \(\mathcal A = \mathcal U\op(a)\mathcal U^{-1}\). Assume that
		\[|\partial_\eta^l a(\eta)| \lesssim a(\eta), \qquad l \in \{0, \cdots, 4\}.\]
		Suppose also that there is a fixed \(\alpha \ge 0\), with \(\alpha + 4 \le n\), such that
		\[\frac{a(\eta + \lambda)}{a(\eta)} \lesssim \langle \lambda \rangle^\alpha.\]
		Then
		\[\left\| \mathcal A U' \mathcal A^{-1} - U' \right\|_{L^2 \to L^2} \lesssim \epsilon.\]
	\end{lemma}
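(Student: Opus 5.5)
First, remove the phase. Since \(\mathcal U\) is a unitary multiplication operator that commutes with \(U'\), we have
\[\mathcal A U' \mathcal A^{-1} - U' = \mathcal U \left( \op(a) U' \op(a)^{-1} - U' \right) \mathcal U^{-1}.\]
It is therefore enough to bound \(\op(a) U' \op(a)^{-1} - U'\) on \(L^2\). Both \(\op(a)\) and \(\op(a)^{-1} = \op(1/a)\) are Fourier multipliers, and they commute. Hence
\[\op(a) U' \op(a)^{-1} - U' = [\op(a), U'] \op(a)^{-1}.\]
In symbolic form, the symbol of the commutator \([\op(a), U']\) is \(a \# U' - U' a\), and the symbol of the whole operator is \((a \# U' - U' a)\, a^{-1}\). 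This uses the rule \(b \# c = bc\) whenever \(c\) depends only on \(\eta\).

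Next, expand the composition. Apply \eqref{eq:composition_formula} to \(a(\eta) \# U'(y)\), expanding \(a(\eta+\lambda)\) to zeroth order with integral remainder, as in the proof of Lemma \ref{lem:weight-derivative}:
\[a(\eta+\lambda) = a(\eta) + \lambda \int_0^1 a'(\eta + \theta\lambda)\, d\theta.\]
The zeroth-order term reproduces \(U' a\). In the remainder, integrating by parts in \(z\) turns the factor \(\lambda\) into a derivative on \(U'\), which gives
\[r(y,\eta) = \frac{-i}{2\pi} \int\!\!\int\!\!\int_0^1 e^{-iz\lambda}\, U''(y+z)\, a'(\eta+\theta\lambda)\, a(\eta)^{-1}\, d\theta\, d\lambda\, dz.\]
This is the full symbol of the operator to be estimated. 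It is explicitly \(O(\epsilon)\) because of the factor \(U''\).

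Now bound the symbol and apply Theorem \ref{thm:CV}. Insert the identity
\[e^{-iz\lambda} = \langle z\rangle^{-2}(1-\partial_\lambda^2)\, \langle \lambda\rangle^{-N}(1-\partial_z^2)^{N/2} e^{-iz\lambda},\]
with \(N\) even and \(N \ge \alpha + 2\), and integrate by parts. The \(z\)-derivatives fall on \(U''\), costing at most \(N\) derivatives. The \(\lambda\)-derivatives (at most two) fall on \(a'(\eta+\theta\lambda)\) and on \(\langle\lambda\rangle^{-N}\). By the hypothesis \(|\partial_\eta^l a| \lesssim a\) and the growth bound \(a(\eta+\theta\lambda)/a(\eta) \lesssim \langle\lambda\rangle^\alpha\), the integrand is dominated by
\[\epsilon\, \langle z\rangle^{-2} \langle\lambda\rangle^{\alpha - N},\]
which is integrable once \(N > \alpha + 1\). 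Theorem \ref{thm:CV} also needs \(\partial_y\) and \(\partial_\eta\) of the symbol. A \(\partial_y\) adds one more derivative on \(U''\). A \(\partial_\eta\) hits either \(a'(\eta+\theta\lambda)\) or \(a(\eta)^{-1}\), and both are again controlled by \(a\) itself through the hypothesis. The totals stay within \(\partial_\eta^l a\) for \(l \le 4\) and within \(\|U''\|_{W^{n,\infty}}\), since \(N + 1 \le \alpha + 4 \le n\) for a suitable choice of \(N\). This gives \(|\partial_y^j \partial_\eta^l r| \lesssim \epsilon\) for \(j,l \in \{0,1\}\), and Theorem \ref{thm:CV} yields the claimed operator bound.

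The main obstacle is the derivative bookkeeping that makes these counts close. The weight \(\langle\lambda\rangle^{\alpha}\) from the growth of \(a\) must be beaten by enough \(z\)-integrations by parts, while the number of derivatives on \(U''\) must stay within \(n\) and the number of \(\eta\)-derivatives on \(a\) must stay within four. If the counting is tight, I would use a non-integer power \(\langle\lambda\rangle^{-(\alpha+2)}\) via fractional operators, or split into the regions \(|z| \le 1\) and \(|z| > 1\), to save derivatives.
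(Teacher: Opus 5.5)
Your proposal is correct and follows the paper's proof essentially step for step. You remove \(\mathcal U\) by unitary conjugation, Taylor-expand \(a(\eta+\lambda)\) to first order and integrate by parts in \(z\) to produce \(U''\), integrate by parts with \(1-\partial_\lambda^2\) and \((1-\partial_z^2)^{N/2}\) (the paper's \(N=2p\) with \(\alpha+1<2p\le n-1\)), and finish with Calder\'on--Vaillancourt; your derivative count (at most \(N+1\le n\) on \(U''\), at most four on \(a\)) matches the paper's, so the fallback refinements you mention are unnecessary.
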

	
	\begin{proof}
		Since \(\mathcal U\) is unitary and commutes with \(U'\), it suffices to prove
		\[\left\| \op (a \# U' \# a^{-1} - U') \right\|_{L^2 \to L^2} \lesssim \epsilon.\]
		By the composition formula \eqref{eq:composition_formula}, the symbol is
		\[(a \# U' \# a^{-1} - U')(y,\eta) = \frac{1}{2\pi} \int_{\mathbb R} \int_{\mathbb R} e^{-iz\lambda} U'(y+z) \frac{a(\eta + \lambda) - a(\eta)}{a(\eta)} d\lambda dz.\]
		Since
		\[a(\eta + \lambda) - a(\eta) = \lambda \int_{0}^{1} a'(\eta + \theta \lambda) d \theta,\]
		integration by parts in \(z\) gives
		\begin{equation}
			\label{eq:shear-conjugation-symbol}
			(a \# U' \# a^{-1} - U')(y,\eta) = - \frac{i}{2\pi} \int_{\mathbb R} \int_{\mathbb R} \int_{0}^{1} e^{-iz\lambda} U''(y+z) \frac{a'(\eta + \theta \lambda)}{a(\eta)} d\theta d\lambda dz.
		\end{equation}
		For \(m \in \{0, 1, 2\}\) and \(l \in \{0, 1\}\), the assumptions on \(a\) imply
		\[\left| \partial_\eta^l \partial_\lambda^m \left( \frac{a'(\eta + \theta \lambda)} {a(\eta)} \right) \right| \lesssim \frac{a(\eta + \theta \lambda)}{a(\eta)} \lesssim \langle \lambda \rangle^\alpha.\]
		Choose an integer \(p\) such that \(\alpha + 1 < 2p \le n - 1\). Applying integration by parts to \eqref{eq:shear-conjugation-symbol}, using \(1 - \partial_\lambda^2\) to gain \(\langle z \rangle^{-2}\) and \((1 - \partial_z^2)^p\) to gain \(\langle \lambda \rangle^{-2p}\), we obtain
		\[|(a \# U' \# a^{-1} - U')(y,\eta)| \lesssim \epsilon \int_{\mathbb R} \int_{\mathbb R} \langle z\rangle^{-2} \langle \lambda \rangle^{-2p} \langle \lambda \rangle^\alpha d \lambda dz \lesssim \epsilon.\]
		The same argument applies after taking up to one derivative in each of \(y\) and \(\eta\). More precisely,
		\[|\partial_y^j \partial_\eta^l (a \# U' \# a^{-1} - U')(y,\eta)| \lesssim \epsilon, \qquad j, l \in \{0,1\}.\]
		Here at most \(2p + 1 \le n\) derivatives fall on \(U''\), while at most \(4\) derivatives fall on \(a\). The Calder\'on--Vaillancourt Theorem \ref{thm:CV} completes the proof.
	\end{proof}
	
	\subsection{Proof of Proposition \ref{prop:basic-energy}}
	
	With the preceding lemmas established, we turn to the proof of the basic energy estimate.
	
	\begin{proof}[Proof of Proposition \ref{prop:basic-energy}]
		We first prove \eqref{eq:basic-energy-equivalence}. By its definition in \eqref{eq:basic-weight}, \(S\) is the conjugate of the Fourier multiplier \(\exp( \gamma \arctan \frac{\eta}{k})\) by the unitary operator \(\mathcal U\). Since this multiplier is bounded above and below by positive constants, Plancherel's theorem gives
		\[\| Sf \|_{L^2} \approx \| f \|_{L^2}\]
		for all \(f \in L^2(\mathbb R)\). Moreover, \(B_2 = ik \Phi\), so the definition of \(\widetilde V\) implies
		\[\| \widetilde V - V \|_{L^2} = \left\| \frac{U'}{\beta} \Phi \right\|_{L^2} = \left\| \frac{U'}{ik \beta} B_2 \right\|_{L^2} \le \frac{U'_{\sup}} {|\beta|} \|B\|_{L^2}.\]
		Applying the triangle inequality in both directions, we obtain
		\[\left( 1 + \frac{U'_{\sup}}{|\beta|} \right)^{-2} \left( \| V \|_{L^2}^2 + \| B \|_{L^2}^2 \right) \le \| \widetilde V \|_{L^2}^2 + \| B \|_{L^2}^2 \le \left( 1 + \frac{U'_{\sup}}{|\beta|} \right)^2 \left( \| V \|_{L^2}^2 + \| B \|_{L^2}^2 \right).\]
		Together with the definition of \(\mathcal E\), \eqref{eq:basic-energy-equivalence} follows.
		
		To prove \eqref{eq:basic-energy-decay}, we compute the time derivative of \(\mathcal E\). By \eqref{eq:corrected-system},
		\[\frac{d}{dt}\mathcal E = \re \langle S \widetilde V, (\partial_t S) \widetilde V \rangle + \re \langle SB, (\partial_t S) B \rangle + \re \langle S \widetilde V, ik\beta S B - S \nabla_t P \rangle + \re \langle SB, ik \beta S \widetilde V \rangle.\]
		The terms involving \(ik\beta\) cancel because
		\[\re \langle S \widetilde V, ik \beta S B \rangle = - \re \langle SB, ik \beta S \widetilde V \rangle.\]
		Applying \(S D_t = D_t S\) and integrating by parts in the pressure term, we have
		\[- \re \langle S \widetilde V, S \nabla_t P \rangle = - \re \langle S \widetilde V, \nabla_t (SP) \rangle = \re \langle \nabla_t \cdot (S \widetilde V), SP \rangle = \frac{1}{\beta} \re \langle S(U'B_2), SP \rangle,\]
		where we used \(\nabla_t \cdot \widetilde V = \frac{U'}{\beta} B_2\). Consequently, combining with \eqref{eq:pressure}, we obtain
		\begin{equation}
			\label{eq:basic-energy-identity}
			\frac{d}{dt}\mathcal E = \re \langle S \widetilde V, (\partial_t S) \widetilde V \rangle + \re \langle S B, (\partial_t S) B \rangle + \frac{2}{\beta} \re \langle S (U' B_2), ik (-\Delta_t)^{-1} S (U' \widetilde V_2) \rangle.
		\end{equation}
		
		We now estimate the terms involving \(\partial_t S\). We give the details for the term containing \(\widetilde V\) and the term containing \(B\) is treated in the same way. The decomposition in Lemma \ref{lem:weight-derivative} gives
		\[(\partial_t S) S^{-1} = - \gamma U' k^2 (-\Delta_t)^{-1} + \mathcal R_S,\]
		where \(\mathcal R_S = \mathcal U\op(r_S)\mathcal U^{-1}\). Therefore,
		\[\re \langle S \widetilde V, (\partial_t S) \widetilde V \rangle = - \gamma \re \langle S \widetilde V, U' k^2 (-\Delta_t)^{-1} S \widetilde V \rangle + \re \langle S \widetilde V, \mathcal R_S S \widetilde V \rangle.\]
		
		To estimate the principal term, we apply Lemma \ref{lem:shear-conjugation} to \((-\Delta_t)^{1/2}\), whose symbol before conjugation is \((k^2 + \eta^2)^{1/2}\). This gives
		\[\left\| (-\Delta_t)^{1/2} U' (-\Delta_t)^{-1/2} - U' \right\|_{L^2 \to L^2} \lesssim \epsilon.\]
		Since \((-\Delta_t)^{-1/2}\) is self-adjoint, the principal term can be written as
		\begin{align*}
			- \gamma \re \langle S \widetilde V, U' k^2 (-\Delta_t)^{-1} S \widetilde V \rangle =& - \gamma \re \left\langle k (-\Delta_t)^{-1/2} S \widetilde V, (-\Delta_t)^{1/2} U' (-\Delta_t)^{-1/2} k (-\Delta_t)^{-1/2} S \widetilde V \right\rangle \\
			=& - \gamma \left\langle k (-\Delta_t)^{-1/2} S \widetilde V, U' k (-\Delta_t)^{-1/2} S \widetilde V \right\rangle \\
			&- \gamma \re \left\langle k (-\Delta_t)^{-1/2} S \widetilde V, \left( (-\Delta_t)^{1/2} U' (-\Delta_t)^{-1/2} - U' \right) k (-\Delta_t)^{-1/2} S \widetilde V \right\rangle \\
			\le& - \gamma (U'_{\inf} - C \epsilon) \left\| k  (-\Delta_t)^{-1/2} S \widetilde V \right\|_{L^2}^{2}.
		\end{align*}
		The last inequality follows from \(U' \ge U'_{\inf}\) and the preceding operator bound. Here and below, we use \(C > 0\) for constants that may change from line to line but depend only on \(U'_{\inf},U'_{\sup},\beta\), uniformly in \(k,t,\epsilon\).
		
		The remainder bound in Lemma \ref{lem:weight-derivative} yields
		\[\left\| k^{-1} (-\Delta_t)^{1/2} \mathcal R_S k^{-1} (-\Delta_t)^{1/2} \right\|_{L^2 \to L^2} = \left\| \op \left( \left\langle \frac{\eta}{k} \right\rangle \# r_S \# \left\langle \frac{\eta}{k} \right\rangle \right) \right\|_{L^2 \to L^2} \lesssim \epsilon,\]
		which implies
		\[\left| \re \langle S \widetilde V, \mathcal R_S S \widetilde V \rangle\right| = \left| \re \left\langle k (-\Delta_t)^{-1/2} S \widetilde V, k^{-1} (-\Delta_t)^{1/2} \mathcal R_S k^{-1} (-\Delta_t)^{1/2} k (-\Delta_t)^{-1/2} S \widetilde V \right\rangle \right| \lesssim \epsilon \left\| k (-\Delta_t)^{-1/2} S \widetilde V \right\|_{L^2}^{2}.\]
		Thus we conclude that
		\[\re \langle S \widetilde V, (\partial_t S) \widetilde V \rangle \le (-\gamma U'_{\inf} + C \epsilon) \left\| k (-\Delta_t)^{-1/2} S \widetilde V \right\|_{L^2}^{2}.\]
		The same argument applies to \(B\). Adding the two estimates and recalling the definition of \(\mathcal D\), we obtain
		\[\re \langle S \widetilde V, (\partial_t S) \widetilde V \rangle + \re \langle S B, (\partial_t S) B \rangle \le (-\gamma U'_{\inf} + C \epsilon) \mathcal D.\]
		
		It remains to estimate the pressure term in \eqref{eq:basic-energy-identity}. Applying Lemma \ref{lem:shear-conjugation} to \((-\Delta_t)^{-1/2} S\), we have
		\[\left\| (-\Delta_t)^{-1/2} S U' S^{-1} (-\Delta_t)^{1/2} - U' \right\|_{L^2 \to L^2} \lesssim \epsilon.\]
		Consequently,
		\begin{align*}
			\left\| k (-\Delta_t)^{-1/2} S (U' B_2) \right\|_{L^2} &\le \left\| (-\Delta_t)^{-1/2} S U' S^{-1} (-\Delta_t)^{1/2} \right\|_{L^2 \to L^2} \left\| k (-\Delta_t)^{-1/2} S B_2 \right\|_{L^2} \\
			&\le (U'_{\sup} + C \epsilon) \left\| k (-\Delta_t)^{-1/2} S B_2 \right\|_{L^2},
		\end{align*}
		where we used \(\| U' \|_{L^2 \to L^2} = U'_{\sup}\). The same estimate holds with \(B_2\) replaced by \(\widetilde V_2\). Hence, by the Cauchy--Schwarz inequality and Young's inequality, together with \(|k| \ge 1\), it follows that
		\begin{align*}
			\left| \frac{2}{\beta} \re \left\langle S(U'B_2), ik (-\Delta_t)^{-1} S(U' \widetilde V_2) \right\rangle \right| &\le
			\frac{2}{|\beta k|} \left\| k (-\Delta_t)^{-1/2} S (U'B_2) \right\|_{L^2} \left\| k (-\Delta_t)^{-1/2} S (U'\widetilde V_2) \right\|_{L^2} \\
			&\le \frac{2 (U'_{\sup} + C \epsilon)^2}{|\beta|} \left\| k (-\Delta_t)^{-1/2} S B_2 \right\|_{L^2} \left\| k (-\Delta_t)^{-1/2} S \widetilde V_2 \right\|_{L^2} \\
			&\le \frac{(U'_{\sup} + C \epsilon)^2}{|\beta|} \mathcal D \\
			&\le \left( \frac{(U'_{\sup})^2}{|\beta|} + C \epsilon \right) \mathcal D.
		\end{align*}
		In the last step, we used the convention that \(C\) may change from line to line.
		
		Combining \eqref{eq:basic-energy-identity} with the preceding estimates and using the definition of \(\gamma\), we obtain
		\[\frac{d}{dt} \mathcal E \le \left( -\gamma U'_{\inf} + \frac{(U'_{\sup})^2}{|\beta|} + C \epsilon \right) \mathcal D = \left(- U'_{\inf} + C \epsilon \right) \mathcal D \le - \frac{U'_{\inf}}{2} \mathcal D \le 0.\]
		Here the assumed smallness of \(\epsilon\) ensures \(C \epsilon \le \frac{U'_{\inf}}{2}\). This proves \eqref{eq:basic-energy-decay} and completes the proof.
	\end{proof}
	
	\section{Stability and inviscid damping}
	\label{sec3}
	
	The basic energy estimate gives uniform \(L^2\) control of \((V,B)\). We now introduce terminal-time-dependent weights to obtain the higher- and lower-order estimates needed for Theorem \ref{thm:stability}. In deriving these estimates, we fix a terminal time \(s \ge 0\) and set \(0 \le t \le s\). We then combine the energy estimates to prove stability and inviscid damping.
	
	\subsection{The higher-order energy estimate}
	
	For the higher-order estimate, we choose a weight that becomes a second-order operator at the terminal time. Define
	\begin{equation}
		\label{eq:higher-weight}
		G_s (t) = \mathcal U(t) \op \left( \left( \left\langle \frac{\eta}{k} \right\rangle + 2U'_{\sup}(s-t) \right)^2 \right) \mathcal U(t)^{-1}.
	\end{equation}
	
	When differentiating the \(G_s\)-weighted energy, we distinguish whether the time derivative falls on \(G_s\) or on the remaining factors. Since \(G_s\) commutes with \(D_t\), \(\Delta_t\), and \(S\), the terms in the latter class have the same structure as the corresponding terms in Proposition \ref{prop:basic-energy}, apart from commutator errors involving \(G_s\) and \(U'\).
	
	The terms with \(\partial_t G_s\) supply an additional negative contribution. To see this formally, write
	\[G_s = \left( \left\langle \frac{D_t}{ik} \right\rangle + 2U'_{\sup}(s-t) \right)^2.\]
	At the level of principal symbols, differentiation gives	
	\[(\partial_t G_s) G_s^{-1} \approx  -2 \left( 2U'_{\sup} + U'\frac{D_t}{ik} \left\langle \frac{D_t}{ik} \right\rangle^{-1} \right) \left( \left\langle \frac{D_t}{ik} \right\rangle + 2U'_{\sup}(s-t) \right)^{-1}.\]
	We choose the translation \(2U'_{\sup}(s-t)\) so that the leading term in the time derivative of \(G_s\) is negative. Under the slow-variation assumption, the negative principal part of this contribution absorbs its remainder terms. At \(t = s\), we have \(G_s (s) = - k^{-2} \Delta_s\). Thus comparing the weighted energies at \(t = 0\) and \(t = s\) then yields the higher-order estimate.
	
	\begin{proposition}[Higher-order energy estimate]
		\label{prop:higher-energy}
		Under the assumptions on \(U\) and \(\beta\) in Theorem \ref{thm:stability}, let \((V,B)\) solve \eqref{eq:transformed-linear}. Define
		\[\mathcal E_{2,s} = \frac{1}{2} \| S G_s \widetilde V \|_{L^2}^2 + \frac{1}{2} \| S G_s B \|_{L^2}^2.\]
		Then
		\begin{equation}
			\label{eq:higher-energy-equivalence}
			\mathcal E_{2,s} \approx \| G_s V \|_{L^2}^2 + \| G_s B \|_{L^2}^2,
		\end{equation}
		and
		\begin{equation}
			\label{eq:higher-energy-decay}
			\frac{d}{dt} \mathcal E_{2,s} (t) \le 0, \qquad 0 \le t \le s.
		\end{equation}
		Consequently,
		\begin{equation}
			\label{eq:higher-endpoint}
			\| (-\Delta_s) (V,B) \|_{L^2} (s) \lesssim \| (-\Delta_k) (V,B) \|_{L^2} (0) \langle s \rangle^2.
		\end{equation}
	\end{proposition}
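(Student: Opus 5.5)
We follow the structure of the proof of Proposition \ref{prop:basic-energy}, with $S$ replaced by $SG_s$. The equivalence \eqref{eq:higher-energy-equivalence} is handled first. Since $S$ is bounded above and below on $L^2$ and commutes with $G_s$, we have $\mathcal E_{2,s}\approx \|G_s\widetilde V\|^2+\|G_sB\|^2$. It then suffices to show
\[
\|G_s(\widetilde V-V)\|_{L^2}=\Big\|G_s\frac{U'}{ik\beta}B_2\Big\|_{L^2}\le \Big(\frac{U'_{\sup}}{|\beta|}+C\epsilon\Big)\|G_sB\|_{L^2}.
\]
This follows from Lemma \ref{lem:shear-conjugation} applied to $a(\eta)=(\langle \eta/k\rangle+2U'_{\sup}(s-t))^2$. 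This symbol satisfies $|\partial_\eta^l a|\lesssim a$ and $a(\eta+\lambda)/a(\eta)\lesssim\langle\lambda\rangle^2$ uniformly in $k$, $s$ and $t$, so we may take $\alpha=2$. The triangle inequality then gives the equivalence exactly as before.

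For \eqref{eq:higher-energy-decay}, we differentiate $\mathcal E_{2,s}$. The terms where the time derivative falls on $\widetilde V$ or $B$ reproduce \eqref{eq:basic-energy-identity} with $S$ replaced by $SG_s$, since $G_s$ commutes with $\nabla_t$ and $\Delta_t$. The $ik\beta$ terms still cancel. The only new feature is in the pressure term: we must now move $G_s$ through $U'$, writing $SG_s(U'B_2)=(SG_sU'(SG_s)^{-1})SG_sB_2$. Lemma \ref{lem:shear-conjugation} applied to $(-\Delta_t)^{-1/2}SG_s$ (symbol $\langle\eta/k\rangle^{-1}e^{\gamma\arctan(\eta/k)}a$, with $\alpha=3$) controls this term by $\big(\frac{(U'_{\sup})^2}{|\beta|}+C\epsilon\big)\mathcal D_{2,s}$, where $\mathcal D_{2,s}$ is $\mathcal D$ with $S$ replaced by $SG_s$. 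The $\partial_tS$ terms give $(-\gamma U'_{\inf}+C\epsilon)\mathcal D_{2,s}$ as before, since $\partial_t S$ commutes with $G_s$ up to symbols handled in the same way. Together these yield $-\frac{U'_{\inf}}{2}\mathcal D_{2,s}$.

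It remains to show that the $\partial_tG_s$ terms $\re\langle SG_s\widetilde V,S(\partial_tG_s)\widetilde V\rangle$ are nonpositive up to absorbable errors. Here $\partial_tG_s=-4U'_{\sup}\mathcal U\op(\langle\eta/k\rangle+2U'_{\sup}(s-t))\mathcal U^{-1}+ik[U,G_s]$. Computing as in Lemma \ref{lem:weight-derivative}, we write $ik[U,G_s]$ as
\[
\mathcal U\op\Big(U'\,\partial_\eta a\cdot k\Big)\mathcal U^{-1}+\text{remainder}, \qquad |k\,\partial_\eta a|\le 2\,(\langle\eta/k\rangle+2U'_{\sup}(s-t)),
\]
with a remainder of size $\epsilon$ after weighting. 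Since $|U'|\le U'_{\sup}$, the principal symbol of $(\partial_tG_s)G_s^{-1}$ is bounded above by $-2U'_{\sup}(\langle\eta/k\rangle+2U'_{\sup}(s-t))^{-1}<0$. This gives a negative term
\[
-c\,\big\|(\langle D_t/ik\rangle+2U'_{\sup}(s-t))^{-1/2}SG_s\widetilde V\big\|^2 ,
\]
which must absorb the $O(\epsilon)$ remainder via Calderón--Vaillancourt. Symmetrizing requires writing $\re\langle f,Mf\rangle$ with $M$ conjugated by half-powers of the weight, again using Lemma \ref{lem:shear-conjugation}.

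\textbf{Main obstacle.} The hardest step is this last one. One must show the remainder from $[U,G_s]$, and from commuting $U'$ with $S$ and $G_s^{1/2}$, is $O(\epsilon)$ relative to $\langle\eta/k\rangle+2U'_{\sup}(s-t)$ uniformly in $s-t$. We would first try to handle this with Peetre's inequality and integration by parts in $(z,\lambda)$, as in Lemma \ref{lem:weight-derivative}. The point is that every $\eta$-derivative of $a$ gains a factor $|k|^{-1}(\cdots)^{-1}$ relative to $a$.

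Finally, we integrate $\frac{d}{dt}\mathcal E_{2,s}\le0$ from $0$ to $s$ to get \eqref{eq:higher-endpoint}. At $t=s$ we have $G_s(s)=-k^{-2}\Delta_s$. At $t=0$ we have $G_s(0)\lesssim\langle s\rangle^2\op(\langle\eta/k\rangle^2)$, so $\|G_s(0)f\|\lesssim\langle s\rangle^2 k^{-2}\|(-\Delta_k)f\|$ because $\langle\eta/k\rangle+2U'_{\sup}s\lesssim\langle s\rangle\langle\eta/k\rangle$. Multiplying through by $k^2$ finishes the proof.
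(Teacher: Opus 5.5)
Your proposal is correct and follows essentially the same route as the paper. Both use Lemma \ref{lem:shear-conjugation} for the equivalence and the pressure term, and both reuse the $\partial_t S$ estimate applied to $G_s\widetilde V$ and $G_sB$ (no commutation of $\partial_t S$ with $G_s$ is needed there). Both then handle the $\partial_t G_s$ terms by symmetrizing with $P_s=\mathcal U\op((\langle \eta/k\rangle+2U'_{\sup}(s-t))^{-1/2})\mathcal U^{-1}$, which gives $(-2U'_{\sup}+C\epsilon)\|P_sSG_s\widetilde V\|^2$. One small slip: the principal symbol of $ik[U,G_s]$ is $-kU'\partial_\eta a$, not $+kU'\partial_\eta a$, but this is harmless because you only use its absolute value.
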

	
	\begin{proof}
		\noindent\textbf{Step 1. Energy equivalence.} We first prove \eqref{eq:higher-energy-equivalence}. Applying Lemma \ref{lem:shear-conjugation} to \(G_s\), we obtain
		\[\left\| G_s U' G_s^{-1} - U' \right\|_{L^2 \to L^2} \lesssim \epsilon,\]
		uniformly in \(s \ge 0\) and \(0 \le t \le s\). The definition of \(\widetilde V\) gives
		\[\| G_s (\widetilde V - V) \|_{L^2} = \frac{1}{|\beta k|} \| G_s (U' B_2) \|_{L^2} \le \frac{U'_{\sup} + C \epsilon}{|\beta k|} \| G_s B_2 \|_{L^2} \lesssim \| G_s B \|_{L^2}.\]
		The boundedness of \(S\) and \(S^{-1}\), followed by the triangle inequality in both directions, proves \eqref{eq:higher-energy-equivalence}.
		
		\noindent\textbf{Step 2. Energy derivative and the basic terms.} We next compute the time derivative of \(\mathcal E_{2,s}\). The terms involving \(ik \beta\) in \eqref{eq:corrected-system} cancel after taking real parts. Hence, we are left with the weight derivatives and the pressure term. Since \(S G_s\) commutes with \(\nabla_t\), integration by parts gives
		\[- \re \langle S G_s \widetilde V, S G_s \nabla_t P \rangle = \frac{1}{\beta} \re \langle S G_s (U' B_2), S G_s P \rangle.\]
		Substituting \(P = 2ik (-\Delta_t)^{-1} (U'\widetilde V_2)\) and differentiating the weights, we obtain
		\begin{align*}
			\frac{d}{dt}\mathcal E_{2,s} =& \re \langle S G_s \widetilde V, (\partial_t S) G_s \widetilde V \rangle + \re \langle S G_s B, (\partial_t S) G_s B\rangle \\
			&+ \re \langle S G_s \widetilde V, S (\partial_t G_s) \widetilde V \rangle + \re \langle S G_s B, S (\partial_t G_s) B \rangle \\
			&+ \frac{2}{\beta} \re \langle S G_s (U'B_2), ik (-\Delta_t)^{-1} S G_s (U' \widetilde V_2) \rangle.
		\end{align*}
		
		For the terms involving \((\partial_tS)S^{-1}\), the same argument as in the proof of Proposition \ref{prop:basic-energy} gives
		\[\re \langle S G_s \widetilde V, (\partial_t S) G_s \widetilde V \rangle \le (- \gamma U'_{\inf} + C\epsilon) \left\| k (-\Delta_t)^{-1/2} S G_s \widetilde V \right\|_{L^2}^2,\]
		and
		\[\re \langle S G_s B, (\partial_t S) G_s B \rangle \le (- \gamma U'_{\inf} + C\epsilon) \left\| k (-\Delta_t)^{-1/2} S G_s B \right\|_{L^2}^2.\]
		
		We now estimate the pressure term. The symbol of \((-\Delta_t)^{-1/2} S G_s\) satisfies the derivative and shift assumptions of Lemma \ref{lem:shear-conjugation} uniformly in \(s \ge 0\) and \(0 \le t \le s\). Hence
		\[\left\| (-\Delta_t)^{-1/2} S G_s U' G_s^{-1} S^{-1} (-\Delta_t)^{1/2} - U' \right\|_{L^2 \to L^2} \lesssim \epsilon.\]
		The pressure-term calculation in the proof of Proposition \ref{prop:basic-energy} therefore carries over with \(S\) replaced by \(SG_s\), giving
		\[\left| \frac{2}{\beta} \re \langle S G_s (U' B_2), ik (-\Delta_t)^{-1} S G_s (U' \widetilde V_2) \rangle \right| \le \left( \frac{(U'_{\sup})^2}{|\beta|} + C\epsilon \right) \left( \left\| k (-\Delta_t)^{-1/2} S G_s B \right\|_{L^2}^2 + \left\| k (-\Delta_t)^{-1/2} S G_s \widetilde V \right\|_{L^2}^2 \right).\]
		Together with the estimates for the two terms involving \(\partial_t S\), this shows that the total contribution of these three terms to \(\frac{d}{dt}\mathcal E_{2,s}\) is nonpositive under the smallness assumption on \(\epsilon\). It remains to estimate the terms involving \(\partial_t G_s\).
		
		\noindent\textbf{Step 3. The terms involving \(\partial_t G_s\).} Differentiating \eqref{eq:higher-weight} gives
		\[\partial_t G_s = ik [U, G_s] - 4 U'_{\sup} \mathcal U \op \left( \left\langle \frac{\eta}{k} \right\rangle + 2U'_{\sup}(s-t) \right) \mathcal U^{-1}.\]
		Applying the composition formula \eqref{eq:composition_formula} and expanding the multiplier to first order in the oscillatory variable, as in the proof of Lemma \ref{lem:weight-derivative}, we obtain
		\begin{equation}
			\label{eq:higher-weight-derivative}
			\sigma \left( \mathcal U^{-1} (\partial_t G_s) G_s^{-1} \mathcal U \right) (y,\eta) = -2 \left( 2U'_{\sup} + U'(y) \frac{\eta}{k} \left\langle \frac{\eta}{k} \right\rangle^{-1} \right) \left( \left\langle \frac{\eta}{k} \right\rangle + 2U'_{\sup}(s-t) \right)^{-1} + r_G(y,\eta),
		\end{equation}
		where
		\begin{equation}
			\label{eq:higher-remainder}
			r_G(y,\eta) = \frac{ik}{2\pi} \int_{\mathbb R} \int_{\mathbb R} \int_{0}^{1} e^{-iz\lambda}U''(y+z) (1-\theta) \partial_\eta^2 \left( \left\langle \frac{\eta + \theta \lambda}{k} \right\rangle + 2U'_{\sup}(s-t) \right)^2 \left( \left\langle \frac{\eta}{k} \right\rangle + 2U'_{\sup}(s-t) \right)^{-2} d\theta d\lambda dz.
		\end{equation}
		For convenience, we introduce the auxiliary self-adjoint operator
		\[P_s = \mathcal U \op \left( \left( \left\langle \frac{\eta}{k} \right\rangle + 2U'_{\sup}(s-t) \right)^{-1/2} \right) \mathcal U^{-1}.\]
		Then the principal term in \eqref{eq:higher-weight-derivative} is of the order of \(P_s^2\).
		
		To estimate \(r_G\), direct differentiation gives, for \(m \in \{0, 1, 2\}\),
		\[\left| \partial_\lambda^m \partial_\eta^2 \left( \left\langle \frac{\eta + \theta \lambda}{k} \right\rangle + 2U'_{\sup}(s-t) \right)^2 \right| \lesssim |k|^{-1} \left( \left\langle \frac{\eta + \theta\lambda}{k} \right\rangle + 2U'_{\sup}(s-t) \right).
		\]
		Peetre's inequality implies
		\[\left( \left\langle \frac{\eta + \theta\lambda}{k} \right\rangle + 2U'_{\sup}(s-t) \right) \left( \left\langle \frac{\eta}{k} \right\rangle + 2U'_{\sup}(s-t) \right)^{-1} \lesssim \langle \lambda \rangle.\]
		Integrating by parts in \eqref{eq:higher-remainder}, using \(1 - \partial_\lambda^2\) to gain \(\langle z \rangle^{-2}\) and \((1 - \partial_z^2)^2\) to gain \(\langle \lambda \rangle^{-4}\), we obtain
		\[|r_G(y,\eta)| \lesssim \epsilon \int_{\mathbb R} \int_{\mathbb R} \langle z \rangle^{-2} \langle \lambda \rangle^{-4} \langle \lambda \rangle \left( \left\langle \frac{\eta}{k} \right\rangle + 2U'_{\sup}(s-t) \right)^{-1} d\lambda dz
		\lesssim \epsilon \left( \left\langle \frac{\eta}{k} \right\rangle + 2U'_{\sup}(s-t) \right)^{-1}.\]
		The same argument applied after differentiating \eqref{eq:higher-remainder} yields
		\[\left| \partial_y^j \partial_\eta^l r_G(y,\eta) \right| \lesssim \epsilon \left( \left\langle \frac{\eta}{k} \right\rangle + 2U'_{\sup}(s-t) \right)^{-1}, \qquad j, l \in \{0, \cdots, 5\}.\]
		
		We denote the remainder operator by \(\mathcal R_G = \mathcal U \op(r_G) \mathcal U^{-1}\). We now estimate \(\mathcal R_G\) relative to \(P_s^2\). By the composition formula \eqref{eq:composition_formula},
		\begin{align*}
			\sigma \left( \mathcal U^{-1} P_s^{-1} \mathcal R_G P_s^{-1} \mathcal U \right) (y, \eta) =& \left( \left\langle \frac{\eta}{k} \right\rangle + 2U'_{\sup}(s-t) \right)^{1/2} \# r_G(y,\eta) \# \left( \left\langle \frac{\eta}{k} \right\rangle + 2U'_{\sup}(s-t) \right)^{1/2} \\
			=& \frac{1}{2\pi} \int_{\mathbb R} \int_{\mathbb R} e^{-iz\lambda} \left( \left\langle \frac{\eta + \lambda}{k} \right\rangle + 2U'_{\sup}(s-t) \right)^{1/2} \\
			&\times r_G(y+z,\eta) \left( \left\langle \frac{\eta}{k} \right\rangle + 2U'_{\sup}(s-t)\right)^{1/2} d\lambda dz.
		\end{align*}
		Combining the preceding bounds for \(r_G\) with the same shift estimate and integration-by-parts argument, we obtain
		\[\left| \partial_y^j \partial_\eta^l \sigma \left( \mathcal U^{-1} P_s^{-1} \mathcal R_G P_s^{-1} \mathcal U \right) (y,\eta) \right| \lesssim \epsilon, \qquad j, l \in \{0, 1\}.\]
		Since \(\mathcal U\) is unitary, the Calder\'on--Vaillancourt Theorem \ref{thm:CV} yields
		\begin{equation}
			\label{eq:higher-remainder-bound}
			\left\| P_s^{-1} \mathcal R_G P_s^{-1} \right\|_{L^2 \to L^2} \lesssim \epsilon.
		\end{equation}
		
		We now use \eqref{eq:higher-remainder-bound} to estimate the contribution of \(\partial_t G_s\) to the energy derivative. We rewrite the term containing \(\widetilde V\) in the \(P_s\)-weighted form
		\[\re \langle S G_s \widetilde V, S (\partial_t G_s) \widetilde V \rangle = \re \left\langle P_s S G_s \widetilde V, \left( P_s^{-1} S (\partial_t G_s) G_s^{-1} S^{-1} P_s^{-1} \right) P_s S G_s \widetilde V \right\rangle.\]
		Substituting the decomposition \eqref{eq:higher-weight-derivative} into the operator in this quadratic form and using the definitions of \(P_s\) and \(\mathcal R_G\), we obtain
		\[P_s^{-1} S(\partial_t G_s) G_s^{-1} S^{-1} P_s^{-1} = -4U'_{\sup} -2 \left( S P_s^{-1} U' (S P_s^{-1})^{-1} \right) \mathcal U \op \left(\frac{\eta}{k} \left\langle \frac{\eta}{k} \right\rangle^{-1} \right) \mathcal U^{-1} + S P_s^{-1} \mathcal R_G P_s^{-1} S^{-1}.\]
		Here we used that \(S, G_s, P_s\), and \(\mathcal U \op \left( \frac{\eta}{k} \left\langle \frac{\eta}{k} \right\rangle^{-1} \right) \mathcal U^{-1}\) commute with one another. Applying Lemma \ref{lem:shear-conjugation} to \(S P_s^{-1}\), we obtain
		\[\left\| S P_s^{-1} U' (S P_s^{-1})^{-1} - U' \right\|_{L^2 \to L^2} \lesssim \epsilon.\]
		The multiplier \(\frac{\eta}{k} \left\langle \frac{\eta}{k} \right\rangle^{-1}\) has absolute value at most one. Together with \(\| U' \|_{L^2 \to L^2}\le U'_{\sup}\), this gives
		\[\left\| -2 \left( S P_s^{-1} U' (S P_s^{-1})^{-1} \right) \mathcal U \op \left( \frac{\eta}{k} \left\langle \frac{\eta}{k} \right\rangle^{-1} \right) \mathcal U^{-1} \right\|_{L^2 \to L^2} \le 2U'_{\sup} + C\epsilon.\]
		Moreover, the boundedness of \(S\) and \(S^{-1}\), together with \eqref{eq:higher-remainder-bound}, gives
		\[\left\| S P_s^{-1} \mathcal R_G P_s^{-1} S^{-1} \right\|_{L^2 \to L^2} \lesssim \epsilon.\]
		Thus
		\[\re \langle S G_s \widetilde V, S (\partial_t G_s) \widetilde V \rangle \le \left(-2U'_{\sup} + C \epsilon \right) \| P_s S G_s \widetilde V \|_{L^2}^2.\]
		The same estimate holds with \(\widetilde V\) replaced by \(B\). Indeed, we have
		\[\re \langle S G_s B, S (\partial_t G_s) B \rangle \le \left(-2U'_{\sup} + C \epsilon \right) \| P_s S G_s B \|_{L^2}^2.\]
		
		\noindent\textbf{Step 4. Completion of the estimate.} Combining the preceding estimates and recalling the definition of \(\gamma\), we obtain
		\begin{align*}
			\frac{d}{dt} \mathcal E_{2,s} \le& \left( -U'_{\inf} + C\epsilon \right) \left( \left\| k (-\Delta_t)^{-1/2} S G_s \widetilde V \right\|_{L^2}^2 + \left\| k (-\Delta_t)^{-1/2} S G_s B \right\|_{L^2}^2 \right) \\
			&+ \left( -2U'_{\sup} + C\epsilon \right) \left( \| P_s S G_s \widetilde V \|_{L^2}^2 + \| P_s S G_s B \|_{L^2}^2 \right) \\
			\le& 0.
		\end{align*}
		The final inequality follows from the assumed smallness of \(\epsilon\), proving \eqref{eq:higher-energy-decay}. Together with \eqref{eq:higher-energy-equivalence},
		\[\| G_s (V,B) \|_{L^2}(s) \lesssim \| G_s (V,B) \|_{L^2} (0).\]
		At \(t = s\), \(G_s(s) = -k^{-2} \Delta_s\), while at \(t = 0\),
		\[\sigma (G_s(0)) (y,\eta) = \left( \left\langle \frac{\eta}{k} \right\rangle + 2U'_{\sup}s \right)^2 \lesssim \left\langle \frac{\eta}{k} \right\rangle^2 \langle s \rangle^2,\]
		since \(\mathcal U(0) = 1\). Therefore, we obtain \eqref{eq:higher-endpoint}, which completes the proof of Proposition \ref{prop:higher-energy}.
	\end{proof}
	
	\subsection{The lower-order energy estimate}
	
	For the lower-order estimate, we choose
	\[h(\xi) = 1 + \xi \langle \xi \rangle^{-1},\]
	and define
	\begin{equation}
		\label{eq:lower-weight}
		H_s (t) = \mathcal U (t) \op \left( h \left( \frac{\eta}{k} + \frac{U'_{\inf}}{2}(t-s) \right) \right) \mathcal U^{-1} (t).
	\end{equation}
	
	Formally, at the level of principal symbols,
	\[(\partial_t H_s) H_s^{-1} \approx \left( \frac{U'_{\inf}}{2} - U'(y) \right) h' \left( \frac{D_t}{ik} + \frac{U'_{\inf}}{2}(t-s) \right) h \left( \frac{D_t}{ik} + \frac{U'_{\inf}}{2}(t-s) \right)^{-1}.\]
	The leading symbol is negative, since \(h'(\xi) = \langle \xi \rangle^{-3} > 0\). The translation here is not used to adjust the sign, which already holds without it, but rather to produce decay in the terminal time. Indeed, at \(t = 0\),
	\[h \left( \frac{\eta}{k} - \frac{U'_{\inf}}{2} s \right) \lesssim \left\langle \frac{\eta}{k} \right\rangle^2 \langle s \rangle^{-2},\]
	while at \(t = s\),
	\[h \left( \frac{\eta}{k} \right) \gtrsim \left\langle \frac{\eta}{k} \right\rangle^{-2}.\]
	Comparing the weighted energies at these two times gives the lower-order decay estimate.
	
	\begin{proposition}[Lower-order energy estimate]
		\label{prop:lower-energy}
		Under the assumptions on \(U\) and \(\beta\) in Theorem \ref{thm:stability}, let \((V,B)\) solve \eqref{eq:transformed-linear}. Define
		\[\mathcal E_{-2,s} = \frac{1}{2} \| S H_s \widetilde V \|_{L^2}^2 + \frac{1}{2} \| S H_s B \|_{L^2}^2.\]
		Then
		\begin{equation}
			\label{eq:lower-energy-equivalence}
			\mathcal E_{-2,s} \approx \| H_s V \|_{L^2}^2 + \| H_s B \|_{L^2}^2,
		\end{equation}
		and
		\begin{equation}
			\label{eq:lower-energy-decay}
			\frac{d}{dt} \mathcal E_{-2,s} (t) \le 0, \qquad 0 \le t \le s.
		\end{equation}
		Consequently,
		\begin{equation}
			\label{eq:lower-endpoint}
			\| k^2 (-\Delta_s)^{-1} (V,B) \|_{L^2} (s) \lesssim  \| k^{-2} (-\Delta_k) (V,B) \|_{L^2} (0) \langle s \rangle^{-2}.
		\end{equation}
	\end{proposition}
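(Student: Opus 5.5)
The proof follows the template of Proposition \ref{prop:higher-energy}, with \(G_s\) replaced by \(H_s\). Throughout, write \(\xi = \frac{\eta}{k} + \frac{U'_{\inf}}{2}(t-s)\).

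\textbf{Step 1: energy equivalence.} To prove \eqref{eq:lower-energy-equivalence}, apply Lemma \ref{lem:shear-conjugation} with \(a(\eta) = h(\xi)\). The hypotheses hold uniformly in \(s \ge 0\), \(0 \le t \le s\), and \(k\):
\begin{itemize}
	\item \(h\) is positive and \(h(\xi) \approx \langle \xi \rangle^{-2}\) for \(\xi \to -\infty\);
	\item \(|h^{(l)}(\xi)| \lesssim \langle \xi\rangle^{-2} \lesssim h(\xi)\) for \(l \ge 1\), where derivatives in \(\eta\) carry extra factors \(|k|^{-l} \le 1\);
	\item Peetre's inequality gives \(h(\xi + \lambda/k)/h(\xi) \lesssim \langle \lambda \rangle^{2}\), so \(\alpha = 2\).
\end{itemize}
This yields \(\|H_s U' H_s^{-1} - U'\| \lesssim \epsilon\). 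Hence \(\|H_s(\widetilde V - V)\|_{L^2} \lesssim \|H_s B\|_{L^2}\), and the boundedness of \(S^{\pm1}\) together with the triangle inequality gives the equivalence.

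\textbf{Step 2: the basic terms.} Differentiate \(\mathcal E_{-2,s}\) exactly as in Step 2 of Proposition \ref{prop:higher-energy}.
\begin{itemize}
	\item The \(ik\beta\) terms cancel.
	\item The \((\partial_t S)\) terms are bounded by \((-\gamma U'_{\inf} + C\epsilon)\|k(-\Delta_t)^{-1/2} S H_s \cdot\|^2\).
	\item For the pressure term, apply Lemma \ref{lem:shear-conjugation} to \((-\Delta_t)^{-1/2} S H_s\). Its symbol is still admissible, with \(\alpha\) at most 3. This bounds the pressure term by \(\big((U'_{\sup})^2/|\beta| + C\epsilon\big)\) times the same dissipation.
\end{itemize}
By the choice of \(\gamma\), the sum of these three contributions is nonpositive.

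\textbf{Step 3: the \(\partial_t H_s\) terms (main obstacle).} Write \(\partial_t H_s = ik[U, H_s] + \frac{U'_{\inf}}{2}\,\mathcal U \op(h'(\xi)/k)\,\mathcal U^{-1}\). The composition expansion used in Lemma \ref{lem:weight-derivative} then gives
\[\sigma\big(\mathcal U^{-1}(\partial_t H_s)H_s^{-1}\mathcal U\big) = \Big(\frac{U'_{\inf}}{2} - U'(y)\Big)\frac{h'(\xi)}{h(\xi)} + r_H.\]
The remainder \(r_H\) involves \(U''\) and \(h''\), and I expect \(|\partial_y^j\partial_\eta^l r_H| \lesssim \epsilon\, h'(\xi)/h(\xi)\). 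This relies on \(|h''| \lesssim h'\), which holds since \(h' = \langle\xi\rangle^{-3}\) and \(h'' = -3\xi\langle\xi\rangle^{-5}\), combined with Peetre's inequality for the shift.

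To handle the principal part, introduce the self-adjoint positive weight
\[Q_s = \mathcal U \op\big((h'/h)^{1/2}(\xi)\big)\,\mathcal U^{-1}.\]
Its symbol again satisfies the hypotheses of Lemma \ref{lem:shear-conjugation}: \(h'/h\) behaves like \(\langle\xi\rangle^{-1}\) as \(\xi \to -\infty\) and like \(\langle\xi\rangle^{-3}\) as \(\xi \to +\infty\), so shift ratios are polynomial in \(\lambda\).

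Write the quadratic form as \(\langle Q_s S H_s \widetilde V,\, Q_s^{-1} S(\partial_t H_s)H_s^{-1}S^{-1}Q_s^{-1}\, Q_s S H_s \widetilde V\rangle\). The operator in the middle equals
\[SQ_s^{-1}\Big(\frac{U'_{\inf}}{2} - U'\Big)Q_s S^{-1} + O(\epsilon).\]
By Lemma \ref{lem:shear-conjugation} applied to \(SQ_s^{-1}\), this is \(\frac{U'_{\inf}}{2} - U' + O(\epsilon) \le -\frac{U'_{\inf}}{2} + C\epsilon\). The \(O(\epsilon)\) here absorbs \(Q_s^{-1}\mathcal R_H Q_s^{-1}\), which is bounded by \(\epsilon\) via Calder\'on--Vaillancourt as in \eqref{eq:higher-remainder-bound}.

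The delicate point is verifying the symbol estimates for \(r_H\) relative to \(h'/h\), since \(h'/h\) decays at different rates on the two sides. If a clean bound fails, I would fall back to a weaker requirement: \(r_H\) only needs to be \(\epsilon\)-small relative to \(h'/h + k^2\langle\eta/k\rangle^{-2}\), which may be absorbed by the \(\mathcal D\)-type dissipation from Step 2. Together with Step 2, this gives \eqref{eq:lower-energy-decay} for small \(\epsilon\).

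\textbf{Step 4: endpoints.} From the monotonicity and the equivalence, \(\|H_s(V,B)\|(s) \lesssim \|H_s(V,B)\|(0)\).
\begin{itemize}
	\item At \(t=s\), \(h(\eta/k) \gtrsim \langle \eta/k\rangle^{-2} = k^2(k^2+\eta^2)^{-1}\). Since both multipliers are conjugated by the same \(\mathcal U(s)\), this gives \(\|k^2(-\Delta_s)^{-1}(V,B)\|(s) \lesssim \|H_s(V,B)\|(s)\).
	\item At \(t=0\), \(\mathcal U(0)=1\). For \(\xi = \eta/k - \frac{U'_{\inf}}{2}s\), Peetre's inequality gives \(h(\xi) \lesssim \langle\xi\rangle^{-2} \lesssim \langle\eta/k\rangle^{2}\langle s\rangle^{-2}\). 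Hence \(\|H_s(V,B)\|(0) \lesssim \langle s\rangle^{-2}\|k^{-2}(-\Delta_k)(V,B)\|(0)\).
\end{itemize}
Combining the two bounds proves \eqref{eq:lower-endpoint}.
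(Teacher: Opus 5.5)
Your proposal follows the paper's proof essentially step for step. It uses the same auxiliary weight $Q_s$ with symbol $(h'/h)^{1/2}(\xi)$, the same decomposition $Q_s^{-1}S(\partial_t H_s)H_s^{-1}S^{-1}Q_s^{-1}=\frac{U'_{\inf}}{2}-(SQ_s^{-1})U'(SQ_s^{-1})^{-1}+SQ_s^{-1}\mathcal R_H Q_s^{-1}S^{-1}$, the same appeals to Lemma~\ref{lem:shear-conjugation} and Theorem~\ref{thm:CV}, and the same comparison of multipliers at $t=s$ and $t=0$.

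Two local slips should be fixed.
\begin{itemize}
\item In Step~3, $\partial_t h(\xi)=\frac{U'_{\inf}}{2}h'(\xi)$ has no factor $1/k$. Your resulting symbol $(\frac{U'_{\inf}}{2}-U')h'/h+r_H$ is nonetheless the correct one.
\item In Step~4, the bound $h(\xi)\lesssim\langle\xi\rangle^{-2}$ is false for $\xi>0$, where $h\to 2$. The endpoint estimate $h(\frac{\eta}{k}-\frac{U'_{\inf}}{2}s)\lesssim\langle\frac{\eta}{k}\rangle^{2}\langle s\rangle^{-2}$ still holds if you split cases. For $\xi\ge 0$ one has $\frac{\eta}{k}\ge\frac{U'_{\inf}}{2}s$, so $\langle s\rangle\lesssim\langle\frac{\eta}{k}\rangle$, and $h\le 2$ suffices. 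For $\xi<0$ one has $h(\xi)\le\langle\xi\rangle^{-2}$, and Peetre's inequality applies.
\end{itemize}

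Finally, the fallback you sketch for $r_H$ is not needed. The bound $|h^{(m)}|\lesssim h'$, together with $h'(\xi+\mu)/h'(\xi)\lesssim\langle\mu\rangle^{3}$, gives $|\partial_y^j\partial_\eta^l r_H|\lesssim\epsilon\, h'(\xi)/h(\xi)$ directly. This is exactly how the paper obtains \eqref{eq:lower-remainder-bound}.
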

	
	\begin{proof}
		Direct differentiation and Peetre's inequality give, for each fixed integer \(m \ge 1\),
		\begin{equation}
			\label{eq:h-properties}
			|h^{(m)}(\xi)| \lesssim h'(\xi), \qquad \frac{h(\xi + \lambda)}{h(\xi)} \lesssim \langle \lambda \rangle^2, \qquad \frac{h'(\xi + \lambda)}{h'(\xi)} \lesssim \langle \lambda \rangle^3, \qquad \frac{(h'/h)(\xi + \lambda)}{(h'/h)(\xi)} \lesssim \langle \lambda \rangle^4.
		\end{equation}
		By \eqref{eq:h-properties}, the arguments in the proof of Proposition \ref{prop:higher-energy} imply \eqref{eq:lower-energy-equivalence}. They also show that all contributions to \(\frac{d}{dt} \mathcal E_{-2,s}\) except those involving \(\partial_t H_s\) are nonpositive when combined. It remains to estimate the \(\partial_t H_s\) terms.
		
		Differentiating \eqref{eq:lower-weight} and applying the same first-order symbolic expansion as in Lemma \ref{lem:weight-derivative}, we obtain
		\begin{equation}
			\label{eq:lower-weight-derivative}
			\sigma \left( \mathcal U^{-1} (\partial_t H_s) H_s^{-1} \mathcal U \right) (y,\eta) = - \left( U'(y) - \frac{U'_{\inf}}{2} \right) h' \left( \frac{\eta}{k} + \frac{U'_{\inf}}{2}(t-s) \right) h \left( \frac{\eta}{k} + \frac{U'_{\inf}}{2}(t-s) \right)^{-1} + r_H(y,\eta),
		\end{equation}
		where the integral remainder is
		\[r_H(y,\eta) = \frac{ik}{2\pi} \int_{\mathbb R} \int_{\mathbb R} \int_{0}^{1} e^{-iz\lambda} U''(y+z) (1-\theta) \frac{1}{k^2} h'' \left( \frac{\eta + \theta \lambda}{k} + \frac{U'_{\inf}}{2} (t-s) \right) h \left( \frac{\eta}{k} + \frac{U'_{\inf}}{2}(t-s) \right)^{-1} d\theta d\lambda dz.\]
		Using \eqref{eq:h-properties} and integrating by parts in the oscillatory variables, we obtain
		\[\left| \partial_y^j \partial_\eta^l r_H(y,\eta) \right| \lesssim\epsilon h' \left( \frac{\eta}{k} + \frac{U'_{\inf}}{2}(t-s) \right) h \left(\frac{\eta}{k} + \frac{U'_{\inf}}{2}(t-s) \right)^{-1}, \qquad j, l \in \{0, \cdots, 5\}.\]
		
		We define the auxiliary weight
		\[Q_s = \mathcal U \op \left( h' \left( \frac{\eta}{k} + \frac{U'_{\inf}}{2} (t-s) \right)^{1/2} h \left( \frac{\eta}{k} + \frac{U'_{\inf}}{2}(t-s) \right)^{-1/2} \right) \mathcal U^{-1},\]
		and denote the remainder operator by \(\mathcal R_H = \mathcal U \op(r_H) \mathcal U^{-1}\). The bounds for \(r_H\) above, together with \eqref{eq:h-properties}, the composition formula \eqref{eq:composition_formula}, and the Calder\'on--Vaillancourt Theorem \ref{thm:CV}, yield
		\begin{equation}
			\label{eq:lower-remainder-bound}
			\left\| Q_s^{-1} \mathcal R_H Q_s^{-1} \right\|_{L^2 \to L^2} \lesssim \epsilon.
		\end{equation}
				
		As in the higher-order proof, we write the terms involving \(\partial_t H_s\) in the \(Q_s\)-weighted form. By \eqref{eq:lower-weight-derivative} and the commutativity of \(S\), \(H_s\), and \(Q_s\), the operator in the resulting quadratic forms has the exact decomposition
		\[Q_s^{-1} S (\partial_t H_s) H_s^{-1} S^{-1} Q_s^{-1} = \frac{U'_{\inf}}{2} - (S Q_s^{-1}) U' (S Q_s^{-1})^{-1} + S Q_s^{-1} \mathcal R_H Q_s^{-1} S^{-1}.\]
		Lemma \ref{lem:shear-conjugation} shows that the second term differs from \(-U'\) by at most \(C\epsilon\) in \(L^2 \to L^2\) norm. The remainder bound \eqref{eq:lower-remainder-bound}, together with the boundedness of \(S\) and \(S^{-1}\), controls the last term. Since \(U' \ge U'_{\inf}\), we conclude that
		\[\re \langle S H_s \widetilde V, S (\partial_t H_s) \widetilde V \rangle + \re \langle S H_s B, S (\partial_t H_s) B \rangle \le \left(- \frac{U'_{\inf}}{2} + C\epsilon\right) \left( \| Q_s S H_s \widetilde V \|_{L^2}^2 + \| Q_s S H_s B \|_{L^2}^2 \right).\]
		
		By combining the estimates, \eqref{eq:lower-energy-decay} follows. Finally, the terminal and initial multiplier bounds stated above imply
		\[\| k^2 (-\Delta_s)^{-1} (V,B) \|_{L^2} (s) \lesssim \mathcal E_{-2,s} (s)^{1/2} \le \mathcal E_{-2,s} (0)^{1/2} \lesssim  \| k^{-2} (-\Delta_k) (V,B) \|_{L^2} (0) \langle s\rangle^{-2},\]
		which finishes the proof.
	\end{proof}
	
	\subsection{Proof of Theorem \ref{thm:stability}}
	
	We now prove stability and inviscid damping by combining the energy estimates above. Since we will sum over nonzero horizontal Fourier modes, we restore the subscript \(k\) on the transformed variables and write \(\mathcal U_k\) and \(\Delta_{t,k}\) for the corresponding mode-dependent operators.
	
	\begin{proof}[Proof of Theorem \ref{thm:stability}]
		\noindent\textbf{Step 1. Upper bounds.} By \eqref{eq:potential-representation} and integration by parts, we have
		\[\| (V_{2,k},B_{2,k}) \|_{L^2}^2 (t) = \| k (\Psi_k,\Phi_k) \|_{L^2}^2 (t) = \| k (-\Delta_{t,k})^{-1/2} (V_k,B_k) \|_{L^2}^2 (t) = \left\langle (V_k,B_k), k^2 (-\Delta_{t,k})^{-1} (V_k,B_k) \right\rangle (t),\]
		and
		\[\| (\Omega_k,J_k) \|_{L^2}^2 (t) = \| (-\Delta_{t,k}) (\Psi_k,\Phi_k) \|_{L^2}^2 (t) = \| (-\Delta_{t,k})^{1/2} (V_k,B_k) \|_{L^2}^2 (t) = \langle (V_k,B_k), (-\Delta_{t,k}) (V_k,B_k) \rangle (t).\]
		Combining the Cauchy--Schwarz inequality with \eqref{eq:mode-uniform-bound}, \eqref{eq:higher-endpoint}, and \eqref{eq:lower-endpoint}, we obtain
		\begin{align*}
			\| (V_{1,k},B_{1,k}) \|_{L^2} (t) &\lesssim \| (V_k,B_k) \|_{L^2} (0), \\
			\| (V_{2,k},B_{2,k}) \|_{L^2}^2 (t) &\lesssim \| (V_k,B_k) \|_{L^2} (0) \| k^{-2} (-\Delta_k) (V_k,B_k) \|_{L^2} (0) \langle t \rangle^{-2}, \\
			\| (\Omega_k,J_k) \|_{L^2}^2 (t) &\lesssim \| (V_k,B_k) \|_{L^2} (0) \| (-\Delta_k)(V_k,B_k) \|_{L^2} (0) \langle t \rangle^2.
		\end{align*}
		Since \(\mathcal U_k(t)\) is unitary and \((V_k,B_k)(0) = (v_{\mathrm{in},k},b_{\mathrm{in},k})\), summing the mode estimates over \(k \neq 0\) and applying Plancherel's theorem yields
		\begin{align*}
			\| (v_1,b_1) \|_{L_x^2L_y^2} (t) &\lesssim \| (v_{\mathrm{in}},b_{\mathrm{in}}) \|_{L_x^2L_y^2}, \\
			\| (v_2,b_2) \|_{L_x^2L_y^2} (t) &\lesssim \| (v_{\mathrm{in}},b_{\mathrm{in}}) \|_{L_x^2L_y^2}^{1/2} \| (v_{\mathrm{in}},b_{\mathrm{in}}) \|_{H_{x,y}^2}^{1/2} \langle t \rangle^{-1}, \\
			\| (\omega,j) \|_{L_x^2L_y^2}(t) &\lesssim \| (v_{\mathrm{in}},b_{\mathrm{in}}) \|_{L_x^2L_y^2}^{1/2} \| (v_{\mathrm{in}},b_{\mathrm{in}}) \|_{H_{x,y}^2}^{1/2} \langle t \rangle.
		\end{align*}
		This proves the upper bounds in Theorem \ref{thm:stability}.
		
		\noindent\textbf{Step 2. Lower bounds.} Fix \(s \ge 0\) and, for \(0 \le t \le s\), set
		\[v^\sharp(t,x,y) = (v_1,-v_2) (s-t,-x,y), \qquad b^\sharp(t,x,y) = (b_1,-b_2) (s-t,-x,y), \qquad p^\sharp(t,x,y) = p(s-t,-x,y).\]
		A direct substitution shows that \((v^\sharp,b^\sharp,p^\sharp)\) solves \eqref{eq:linear} with the same \(U\) and \(\beta\). Applying the \(L_x^2L_y^2\) bound established in Step 1 to the solution \((v^\sharp,b^\sharp)\), we obtain
		\begin{equation}
			\label{eq:uniform-reverse-energy-bound}
			\| (v_{\mathrm{in}},b_{\mathrm{in}}) \|_{L_x^2L_y^2} = \| (v^\sharp,b^\sharp) \|_{L_x^2L_y^2} (s) \lesssim \| (v^\sharp,b^\sharp) \|_{L_x^2L_y^2}(0) = \| (v,b) \|_{L_x^2L_y^2} (s).
		\end{equation}
		Since \(s \ge 0\) is arbitrary and \((v_{\mathrm{in}},b_{\mathrm{in}}) \neq 0\), there exists \(c_{\mathrm{in}} > 0\), depending on \(U'_{\inf},U'_{\sup},\beta\) and the initial perturbation, such that
		\begin{equation}
			\label{eq:reverse-energy-bound}
			\| (v,b) \|_{L_x^2L_y^2} (t) \ge c_{\mathrm{in}}, \qquad t \ge 0.
		\end{equation}
		
		We first consider the horizontal components. By \eqref{eq:reverse-energy-bound} and the upper bound for the vertical components from Step 1,
		\[\| (v_1,b_1) \|_{L_x^2L_y^2}^2 (t) = \| (v,b) \|_{L_x^2L_y^2}^2 (t) - \| (v_2,b_2) \|_{L_x^2L_y^2}^2 (t)\ge c_{\mathrm{in}}^2 - C_{\mathrm{in}}^2 \langle t \rangle^{-2},\]
		where \(C_{\mathrm{in}} > 0\) is the constant in the vertical-component upper bound. Thus, for some sufficiently large \(t^*>0\), the horizontal norm is uniformly bounded away from zero for \(t \ge t^*\). It remains to show that \(\| (v_1,b_1) \|_{L_x^2L_y^2}\) is strictly positive on \([0,t^*]\). We argue by contradiction. If it vanished at some \(t_0 \in [0,t^*]\), the divergence-free conditions would give
		\[\partial_y v_2 (t_0) = - \partial_x v_1 (t_0) = 0, \qquad \partial_y b_2 (t_0) = - \partial_x b_1 (t_0) = 0.\]
		But square integrability would force \(v_2 (t_0) = b_2 (t_0) = 0\), contradicting \eqref{eq:reverse-energy-bound}.
		
		We now turn to the vertical components and the vorticity--current pair. On the nonzero horizontal modes, the Biot--Savart formula gives
		\[v = \nabla^\perp \Delta^{-1} \omega, \qquad b = \nabla^\perp\Delta^{-1}j.\]
		Applying the Cauchy--Schwarz inequality in Fourier variables and using \(|k|\ge1\), we obtain
		\begin{equation}
			\label{eq:lower-interpolation}
			\| (v,b) \|_{L_x^2L_y^2}^2 (t) = \| (-\Delta)^{-1/2} (\omega,j) \|_{L_x^2L_y^2}^2 (t) = \langle (-\Delta)^{-1} (\omega,j), (\omega,j) \rangle (t) \le \| (v_2,b_2) \|_{L_x^2L_y^2} (t) \|(\omega,j) \|_{L_x^2L_y^2} (t).
		\end{equation}
		Combining \eqref{eq:reverse-energy-bound} and \eqref{eq:lower-interpolation} with the upper bounds from Step 1, we obtain, after decreasing \(c_{\mathrm{in}}\) if necessary,
		\[\| (v_2,b_2) \|_{L_x^2L_y^2} (t) \ge c_{\mathrm{in}} \langle t \rangle^{-1}, \qquad \| (\omega,j) \|_{L_x^2L_y^2} (t) \ge c_{\mathrm{in}} \langle t \rangle.\]
		Together with the horizontal lower bound, these estimates complete the proof of Theorem \ref{thm:stability}.
	\end{proof}
	
	\section{Scattering and leading asymptotics}
	\label{sec4}
	
	We now turn to the scattering problem. We also work with a fixed \(k \neq 0\) and suppress the mode index on the transformed variables and operators until we sum over the nonzero modes.
	
	\subsection{The scattering system and preparatory estimates}
	
	Recalling the definition of \(F_\pm\) in \eqref{eq:phase-corrected-amplitudes}, we first derive the equations satisfied by \(\omega\) and \(j\). Taking the curl of the two evolution equations in \eqref{eq:linear} and using the divergence-free conditions, we obtain
	\[
	\begin{dcases}
		(\partial_t + U(y) \partial_x) \omega = \beta \partial_x j + U''(y) v_2,\\
		(\partial_t + U(y) \partial_x) j = \beta \partial_x \omega - 2 U'(y) \partial_y b_2 - U''(y) b_2.
	\end{dcases}
	\]
	After taking the Fourier transform in \(x\), we fix a nonzero horizontal mode \(k\) and apply the shear transformation \eqref{eq:shear-transform} to obtain
	\[
	\begin{dcases}
		\partial_t \Omega = ik \beta J + U''V_2,\\
		\partial_t J = ik \beta \Omega - 2 U' D_t B_2 - U'' B_2.
	\end{dcases}
	\]
	Using \eqref{eq:potential-representation}, we express \(V\) and \(B\) in terms of \(\Omega\) and \(J\). Thus
	\begin{equation}
		\label{eq:scattering-vorticity-current}
		\begin{dcases}
			\partial_t \Omega = ik \beta J + ik U'' \Delta_t^{-1} \Omega,\\
			\partial_t J = ik \beta \Omega + (\partial_t \Delta_t) \Delta_t^{-1} J,
		\end{dcases}
	\end{equation}
	where it is important to note that
	\begin{equation}
		\label{eq:laplacian-time-derivative}
		\partial_t \Delta_t = -ik (U' D_t + D_t U' ) = -ik (2 D_t U' - U'') = -ik (2U' D_t + U'').
	\end{equation}
	We also express the amplitudes \(F_\pm\) in sheared coordinates. Specifically, \eqref{eq:phase-corrected-amplitudes} and \eqref{eq:shear-transform} imply
	\begin{equation}
		\label{eq:sheared-amplitudes}
		F_\pm = e^{\mp ik \beta t} (-\Delta_t)^{-1/2} (\Omega \pm J).
	\end{equation}
	
	We now differentiate the normalized vorticity and current density appearing in \(F_\pm\). By \eqref{eq:scattering-vorticity-current}, we have
	\[\partial_t \left( (-\Delta_t)^{-1/2} \Omega \right) = \left( \partial_t (-\Delta_t)^{-1/2} \right) \Omega + ik \beta (-\Delta_t)^{-1/2} J + ik (-\Delta_t)^{-1/2} U'' \Delta_t^{-1} \Omega,\]
	\[\partial_t \left( (-\Delta_t)^{-1/2} J \right) = \left( \partial_t (-\Delta_t)^{-1/2} \right) J + ik \beta (-\Delta_t)^{-1/2} \Omega + (-\Delta_t)^{-1/2} (\partial_t \Delta_t) \Delta_t^{-1} J.\]
	To combine the terms arising from the time dependence of \(\Delta_t\) in the second equation, we note that differentiating the identity \((-\Delta_t)^{-1/2} (-\Delta_t) (-\Delta_t)^{-1/2} = I\) with respect to time yields
	\[(-\Delta_t)^{-1/2} (\partial_t \Delta_t) (-\Delta_t)^{-1/2} = \left( \partial_t (-\Delta_t)^{-1/2} \right) (-\Delta_t)^{1/2} + (-\Delta_t)^{1/2} \left( \partial_t (-\Delta_t)^{-1/2}\right).\]
	Thus we can rewrite both equations in terms of the normalized variables as follows:
	\begin{align*}
		\partial_t \left( (-\Delta_t)^{-1/2} \Omega \right) =& ik \beta (-\Delta_t)^{-1/2} J + \left( \partial_t (-\Delta_t)^{-1/2} \right) (-\Delta_t)^{1/2} \left( (-\Delta_t)^{-1/2} \Omega \right) \\
		&- ik (-\Delta_t)^{-1/2} U'' (-\Delta_t)^{-1/2} \left( (-\Delta_t)^{-1/2} \Omega \right),
	\end{align*}
	\[\partial_t \left( (-\Delta_t)^{-1/2} J \right) = ik \beta (-\Delta_t)^{-1/2} \Omega - (-\Delta_t)^{1/2} \left( \partial_t (-\Delta_t)^{-1/2} \right) \left( (-\Delta_t)^{-1/2} J \right).\]
	Using the normalized evolution equations above, we compute \(\partial_t F_\pm\). The terms involving \(\partial_t (-\Delta_t)^{-1/2}\) give one operator acting on \(F_\pm\) and another acting on \(e^{\mp 2ik\beta t}F_\mp\). We denote these operators by \(K\) and \(R\), respectively:
	\begin{equation}
		\label{eq:scattering-K}
		K = \frac{1}{2} \left( \left(\partial_t (-\Delta_t)^{-1/2} \right) (-\Delta_t)^{1/2} - (-\Delta_t)^{1/2} \left( \partial_t (-\Delta_t)^{-1/2} \right) \right),
	\end{equation}
	\begin{equation}
		\label{eq:scattering-R}
		R = \frac{1}{2} \left( \left( \partial_t (-\Delta_t)^{-1/2} \right) (-\Delta_t)^{1/2} + (-\Delta_t)^{1/2} \left( \partial_t (-\Delta_t)^{-1/2} \right) \right) = \frac{1}{2} (-\Delta_t)^{-1/2} (\partial_t \Delta_t) (-\Delta_t)^{-1/2}.
	\end{equation}
	When we differentiate \eqref{eq:sheared-amplitudes}, the derivatives of the phase factors cancel the terms proportional to \(ik\beta\). Thus
	\begin{equation}
		\label{eq:scattering-system}
		\partial_t F_\pm = K F_\pm + e^{\mp 2ik\beta t} R F_\mp - \frac{ik}{2} (-\Delta_t)^{-1/2} U'' (-\Delta_t)^{-1/2} \left( F_\pm + e^{\mp 2ik \beta t} F_\mp \right).
	\end{equation}
	
	We next collect the operator bounds needed to analyze \eqref{eq:scattering-system}. The bounds for \(R^2\) and \(\partial_t R\) will control the terms arising when we differentiate the normal-form correction for the oscillatory coupling.
	
	\begin{lemma}[Scattering operator bounds]
		\label{lem:scattering-operator-bounds}
		Under the assumptions on \(U\) in Theorem \ref{thm:stability}, for every \(f \in L^2(\mathbb R)\), the following bounds hold:
		\begin{align}
			\| Rf \|_{L^2} &\lesssim \left\| k (-\Delta_t)^{-1/2} f \right\|_{L^2}, \label{eq:scattering-R-bound} \\
			\| Kf \|_{L^2} &\lesssim \left\| k^2 (-\Delta_t)^{-1} f \right\|_{L^2}, \label{eq:scattering-K-bound} \\
			\left\| k (-\Delta_t)^{-1/2} U'' (-\Delta_t)^{-1/2} f \right\|_{L^2} &\lesssim \left\| k^2 (-\Delta_t)^{-1} f \right\|_{L^2}, \label{eq:scattering-curvature-bound} \\
			\| R^2 f \|_{L^2} &\lesssim \left\| k^2 (-\Delta_t)^{-1} f \right\|_{L^2}. \label{eq:scattering-R-square-bound} \\
			\| (\partial_t R) f \|_{L^2} &\lesssim \left\| k^2 (-\Delta_t)^{-1} f \right\|_{L^2}, \label{eq:scattering-R-derivative-bound}
		\end{align}
	\end{lemma}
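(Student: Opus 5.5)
All five bounds will be proved by conjugating with the unitary \(\mathcal U\) and working at the level of symbols in the unsheared frame, where \(\mathcal U^{-1}D_t\mathcal U=\partial_y\) and \(\mathcal U^{-1}(-\Delta_t)\mathcal U=-\Delta_k\). The only genuinely time-dependent object is \(\partial_t\Delta_t\). By \eqref{eq:laplacian-time-derivative}, after conjugation it is \(-ik(2U'\partial_y+U'')\), a first-order operator with coefficients that are bounded (together with \(n\) derivatives, using \(\|U''\|_{W^{n,\infty}}\le\epsilon\)). Write \(\Lambda=(-\Delta_k)^{1/2}\) with symbol \(\langle\eta\rangle_k:=(k^2+\eta^2)^{1/2}\). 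Each estimate then reads \(\|T\Lambda^{m}/|k|^{m}\|_{L^2\to L^2}\lesssim1\) for the appropriate conjugated operator \(T\) and \(m\in\{1,2\}\). We check these with the composition formula \eqref{eq:composition_formula}, Peetre's inequality, and Calder\'on--Vaillancourt (Theorem \ref{thm:CV}), exactly as in Lemmas \ref{lem:weight-derivative} and \ref{lem:shear-conjugation}.

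\textbf{The bounds for \(R\), the curvature term, and \(R^2\).} By \eqref{eq:scattering-R}, \(R=-\tfrac{ik}{2}\Lambda^{-1}(2U'\partial_y+U'')\Lambda^{-1}\). Since \(\partial_y\Lambda^{-1}\) is bounded, \(R\Lambda/|k|=-\tfrac{i\,\mathrm{sgn}(k)}{2}\Lambda^{-1}(2U'\partial_y+U'')\) is bounded, which gives \eqref{eq:scattering-R-bound}. For \eqref{eq:scattering-curvature-bound}, we need \(\|k\Lambda^{-1}U''\Lambda^{-1}\cdot\Lambda^2k^{-2}\|\lesssim1\), i.e.\ that \(\Lambda^{-1}U''\Lambda\) is bounded (using \(|k|\ge1\)). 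This follows from Lemma \ref{lem:shear-conjugation}-type reasoning with \(a=\langle\eta\rangle_k^{-1}\), \(\alpha=1\); indeed it is an \(O(\epsilon)\) perturbation of \(U''\). For \eqref{eq:scattering-R-square-bound}, write \(R^2\Lambda^2k^{-2}=(R\Lambda k^{-1})(\Lambda^{-1}R\Lambda^2 k^{-1})\). The first factor was already shown bounded. The second equals \(-\tfrac{i\,\mathrm{sgn}(k)}{2}\Lambda^{-2}(2U'\partial_y+U'')\Lambda\), which is bounded once \(\Lambda^{-2}U'\Lambda^2\) and \(\Lambda^{-2}U''\Lambda^2\) are bounded. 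This is again a conjugation of a multiplication operator by a Fourier multiplier satisfying the hypotheses of Lemma \ref{lem:shear-conjugation} with \(\alpha=2\).

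\textbf{The bounds for \(\partial_tR\) and \(K\).} For \eqref{eq:scattering-R-derivative-bound}, differentiate \(R=\tfrac12(-\Delta_t)^{-1/2}(\partial_t\Delta_t)(-\Delta_t)^{-1/2}\). Because \(\partial_t^2\Delta_t=-2k^2(U')^2\) is zeroth order, the term \(\tfrac12\Lambda^{-1}(-2k^2U'^2)\Lambda^{-1}\cdot\Lambda^2k^{-2}\) is bounded by the same conjugation argument. The remaining terms contain \(\partial_t(-\Delta_t)^{-1/2}\). Conjugated, this has principal symbol \(-\tfrac12\cdot(-ik)(2U'(i\eta))\langle\eta\rangle_k^{-3}\sim kU'\eta\langle\eta\rangle_k^{-3}\), which is of order \(k\langle\eta\rangle_k^{-2}\). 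We obtain it with an explicit remainder by the resolvent formula \((-\Delta_t)^{-1/2}=\pi^{-1}\int_0^\infty\mu^{-1/2}(\mu-\Delta_t)^{-1}d\mu\), which gives
\[\partial_t(-\Delta_t)^{-1/2}=\pi^{-1}\int_0^\infty\mu^{-1/2}(\mu-\Delta_t)^{-1}(\partial_t\Delta_t)(\mu-\Delta_t)^{-1}d\mu.\]
Hence \(\|\Lambda^{a}\,\partial_t(-\Delta_t)^{-1/2}\,\Lambda^{b}\|\lesssim|k|^{a+b+2}\) whenever \(a+b=-1\) with \(|a|,|b|\le 2\). Each such term then contributes \(\lesssim\|k^2\Lambda^{-2}f\|\).

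For \eqref{eq:scattering-K-bound}, the point is cancellation: \(K\) is a commutator-like difference \(\tfrac12[\partial_t(-\Delta_t)^{-1/2},(-\Delta_t)^{1/2}]\)-type. Its naive order is \(k\langle\eta\rangle^{-1}\), but \(\|Kf\|\) must be controlled by \(k^2\Lambda^{-2}\). Using the resolvent representation, \(K=\tfrac12\pi^{-1}\int\mu^{-1/2}(\mu-\Delta_t)^{-1}[\partial_t\Delta_t,\Lambda](\mu-\Delta_t)^{-1}d\mu\) up to rearrangement, where we use that \(\Lambda\) commutes with \((\mu-\Delta_t)^{-1}\). The commutator \([U'\partial_y,\Lambda]\) has symbol of order \(|U''|\cdot\langle\eta\rangle^0\), with an \(\epsilon\) gain from \(U''\). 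Therefore \([\partial_t\Delta_t,\Lambda]\) is of order \(k\epsilon\), and integrating in \(\mu\) gives \(K\) of order \(k\langle\eta\rangle_k^{-2}\). This yields \(\|K\Lambda^2k^{-2}\|\lesssim 1\).

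I expect this last step to be the main obstacle: making the symbolic cancellation in \(K\) rigorous with uniform-in-\(k,t\) control. The plan is to write the commutator's symbol via the first-order expansion with integral remainder, exactly as in \eqref{eq:r-remainder}, so that every term carries a factor \(U''\) or its derivatives. We then integrate by parts in \((z,\lambda)\) and apply Calder\'on--Vaillancourt after weighting by \(\langle\eta/k\rangle^{2}\), handling the \(\mu\)-integral by the bound \(\int_0^\infty\mu^{-1/2}(\mu+\langle\eta\rangle_k^2)^{-2}d\mu\approx\langle\eta\rangle_k^{-3}\).
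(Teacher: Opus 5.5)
\textbf{Bounds for $R$, the curvature term, and $R^2$.} These follow the paper's argument. For the first one, rewrite $U'\partial_y=\partial_yU'-U''$ before using that $\partial_y\Lambda^{-1}$ is bounded, as the paper does with $D_tU'$.

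\textbf{The gap: the bound for $\partial_tR$.} Write $X=\mathcal U^{-1}\bigl(\partial_t(-\Delta_t)^{-1/2}\bigr)\mathcal U$ and $T=-ik(2U'\partial_y+U'')$. After right-multiplying by $k^{-2}\Lambda^2$, your two terms are $\tfrac12k^{-2}(X\Lambda^2)(\Lambda^{-2}T\Lambda)$ and $\tfrac12k^{-2}(\Lambda^{-1}T)(X\Lambda^2)$. So you need $\|X\Lambda^2\|_{L^2\to L^2}\lesssim|k|$, that is $a=0$, $b=2$.

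That case is not in the range $a+b=-1$ you state. It also cannot be obtained from the resolvent formula by integrating operator norms. Already for constant $U'$, the integrand $\mu^{-1/2}(\mu+\Lambda^2)^{-1}T(\mu+\Lambda^2)^{-1}\Lambda^2$ has norm comparable to $|k|\mu^{-1}$ for $\mu\ge k^2$, so the $\mu$-integral diverges logarithmically. You would have to integrate at the symbol level, or use Cauchy--Schwarz in $\mu$.

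The missing observation that makes this unnecessary follows directly from the definitions of $K$ and $R$:
\[
\bigl(\partial_t(-\Delta_t)^{-1/2}\bigr)(-\Delta_t)^{1/2}=R+K,\qquad (-\Delta_t)^{1/2}\bigl(\partial_t(-\Delta_t)^{-1/2}\bigr)=R-K.
\]
Hence your two terms are exactly $(R+K)R$ and $R(R-K)$. This is the paper's identity $\partial_tR=2R^2+KR-RK-k^2(-\Delta_t)^{-1/2}(U')^2(-\Delta_t)^{-1/2}$. Then \eqref{eq:scattering-R-derivative-bound} follows from four ingredients:
\begin{itemize}
\item the bounds for $R$, $K$, and $R^2$;
\item the weighted estimate $\|k(-\Delta_t)^{-1/2}Rf\|\lesssim\|k^2(-\Delta_t)^{-1}f\|$, which your $R^2$ factorization already contains;
\item the conjugation lemma, applied to $(U')^2$.
\end{itemize}

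\textbf{Your route for $K$.} Here you genuinely differ from the paper. Your identity $\mathcal U^{-1}K\mathcal U=\frac{1}{2\pi}\int_0^\infty\mu^{-1/2}(\mu+\Lambda^2)^{-1}[T,\Lambda](\mu+\Lambda^2)^{-1}\,d\mu$ is exact, not just up to rearrangement, since $\Lambda$ commutes with the resolvents. It correctly places the cancellation in a commutator with $U'$.

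However, $[T,\Lambda]=-ik\bigl(2[U',\Lambda]\partial_y+[U'',\Lambda]\bigr)$ is of order one, of size $|k|\epsilon\langle\eta\rangle_k$, not order zero. Only with that count does the $\mu$-integral give your (correct) final size $|k|\epsilon\langle\eta\rangle_k^{-2}$. The same caveat about handling the $\mu$-integral also applies here.

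The paper avoids resolvents altogether. Since $\partial_t(-\Delta_t)^{-1/2}=ik[U,(-\Delta_t)^{-1/2}]$, the symbol of $\mathcal U^{-1}K\mathcal U$ is $-\frac{ik}{2}\bigl(a^{-1}\#U\#a+a\#U\#a^{-1}-2U\bigr)$ with $a=\langle\eta/k\rangle$. In the oscillatory integral the amplitude is $(g-1)^2/g$, where $g=a(\eta+\lambda)/a(\eta)$, and this vanishes to second order at $\lambda=0$. Two integrations by parts in $z$ then put $U''$ in front, and Calder\'on--Vaillancourt applies directly. This is shorter, and gives uniformity in $k$ and $t$ without any square-function step.
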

	
	\begin{proof}
		We first prove \eqref{eq:scattering-R-bound}. By \eqref{eq:scattering-R} and \eqref{eq:laplacian-time-derivative}, we obtain
		\begin{equation}
			\label{eq:scattering-R-representation}
			R = -ik (-\Delta_t)^{-1/2} D_t U' (-\Delta_t)^{-1/2} + \frac{ik}{2} (-\Delta_t)^{-1/2} U'' (-\Delta_t)^{-1/2}.
		\end{equation}
		Unitary conjugation by \(\mathcal U(t)\) and Plancherel's theorem give
		\begin{equation}
			\label{eq:scattering-multiplier-bounds}
			\left\| (-\Delta_t)^{-1/2} D_t \right\|_{L^2 \to L^2} \le 1, \qquad \left\| (-\Delta_t)^{-1/2} \right\|_{L^2 \to L^2} \le 1.
		\end{equation}
		Using \(\| U' \|_{L^2 \to L^2} \le U'_{\sup}\) and \(\| U'' \|_{L^2 \to L^2} \le \epsilon\), we conclude that
		\[\| Rf \|_{L^2} \le \left( U'_{\sup} + \frac{\epsilon}{2} \right) \left\| k (-\Delta_t)^{-1/2} f \right\|_{L^2} \lesssim \left\| k (-\Delta_t)^{-1/2} f \right\|_{L^2}.\]
		
		To estimate \(K\), we use the cancellation between the two products in \eqref{eq:scattering-K}. Since
		\[\sigma \left( \mathcal U^{-1} \left( \partial_t (-\Delta_t)^{-1/2} \right) \mathcal U \right) (y,\eta) = \sigma \left( \mathcal U^{-1} ik \left[ U, (-\Delta_t)^{-1/2} \right] \mathcal U \right) (y,\eta) = i \sgn(k) \left[U(y), \left\langle \frac{\eta}{k} \right\rangle^{-1} \right],\]
		it follows from \eqref{eq:scattering-K} and the composition formula \eqref{eq:composition_formula} that
		\begin{align*}
			\sigma (\mathcal U^{-1} K \mathcal U) (y,\eta) &= - \frac{ik}{2} \left( \left\langle \frac{\eta}{k} \right\rangle^{-1} \# U(y) \# \left\langle \frac{\eta}{k} \right\rangle + \left\langle \frac{\eta}{k} \right\rangle \# U(y) \# \left\langle \frac{\eta}{k} \right\rangle^{-1} - 2U(y) \right) \\
			&= - \frac{ik}{4\pi} \int_{\mathbb R} \int_{\mathbb R} e^{-iz\lambda}U(y+z) \left( \left\langle \frac{\eta + \lambda}{k} \right\rangle^{-1} \left\langle \frac{\eta}{k} \right\rangle + \left\langle \frac{\eta + \lambda}{k} \right\rangle \left\langle\frac{\eta}{k} \right\rangle^{-1} - 2 \right) d\lambda dz.
		\end{align*}
		The expression in parentheses and its first \(\lambda\)-derivative vanish at \(\lambda=0\). Substituting the second-order Taylor expansion with integral remainder into the oscillatory integral and integrating twice by parts in \(z\), we obtain
		\[\sigma( \mathcal U^{-1} K \mathcal U) (y,\eta) = \frac{ik}{4\pi} \int_{\mathbb R} \int_{\mathbb R} \int_{0}^{1} e^{-iz\lambda} U''(y+z) (1-\theta) \left( \left\langle \frac{\eta}{k} \right\rangle \partial_\eta^2 \left\langle \frac{\eta + \theta\lambda}{k} \right\rangle^{-1} + \left\langle \frac{\eta}{k} \right\rangle^{-1} \partial_\eta^2 \left\langle \frac{\eta + \theta\lambda}{k} \right\rangle \right) d\theta d\lambda dz.\]
		Direct differentiation and Peetre's inequality give, for \(m \in \{0, 1, 2\}\) and \(l \in \{0, 1\}\),
		\[\left| \partial_\lambda^m \partial_\eta^l \left( \left\langle \frac{\eta}{k} \right\rangle \partial_\eta^2 \left\langle \frac{\eta + \theta\lambda}{k} \right\rangle^{-1} + \left\langle \frac{\eta}{k} \right\rangle^{-1} \partial_\eta^2 \left\langle \frac{\eta + \theta\lambda}{k} \right\rangle \right) \right| \lesssim|k|^{-2} \left\langle \frac{\eta}{k} \right\rangle^{-2} \langle \lambda \rangle^3.\]
		Integrating by parts in the oscillatory integral with \(1-\partial_\lambda^2\) and \((1-\partial_z^2)^3\), we obtain, for \(j, l \in \{0,1\}\),
		\[\left| \partial_y^j \partial_\eta^l \sigma (\mathcal U^{-1} K \mathcal U) (y,\eta) \right| \lesssim \epsilon |k|^{-1} \int_{\mathbb R} \int_{\mathbb R} \langle z \rangle^{-2} \langle \lambda \rangle^{-6} \left\langle \frac{\eta}{k} \right\rangle^{-2} \langle \lambda \rangle^3 d\lambda dz \lesssim \epsilon \left\langle \frac{\eta}{k} \right\rangle^{-2}.\]
		To prove \eqref{eq:scattering-K-bound}, it suffices to show that
		\[\| K k^{-2} (-\Delta_t) \|_{L^2 \to L^2} \lesssim 1.\]
		After conjugating by \(\mathcal U\), the symbol of this operator is
		\[\sigma \left( \mathcal U^{-1} K k^{-2} (-\Delta_t) \mathcal U \right) (y,\eta) = \sigma (\mathcal U^{-1} K \mathcal U) (y,\eta) \left\langle \frac{\eta}{k} \right\rangle^2.\]
		This symbol and its first derivatives in \(y\) and \(\eta\) are bounded in absolute value by \(C\epsilon\), as follows from the preceding estimate for \(\sigma (\mathcal U^{-1} K \mathcal U)\). The Calder\'on--Vaillancourt Theorem \ref{thm:CV} gives the required operator bound.
		
		Turning to \eqref{eq:scattering-curvature-bound}, we apply the argument of Lemma \ref{lem:shear-conjugation} to \((-\Delta_t)^{-1/2}\), with \(U'\) replaced by \(U''\). Since \(\| U'' \|_{W^{n,\infty}} \le \epsilon\), this gives
		\begin{equation}
			\label{eq:scattering-curvature-conjugation}
			\left\| (-\Delta_t)^{-1/2} U'' (-\Delta_t)^{1/2} \right\|_{L^2 \to L^2} \lesssim \epsilon.
		\end{equation}
		Consequently, using \(|k| \ge 1\),
		\[\left\| k (-\Delta_t)^{-1/2} U'' (-\Delta_t)^{-1/2} f \right\|_{L^2} \le \left\| (-\Delta_t)^{-1/2} U'' (-\Delta_t)^{1/2} \right\|_{L^2 \to L^2} \left\| k^2 (-\Delta_t)^{-1} f \right\|_{L^2} \lesssim \epsilon \left\| k^2(-\Delta_t)^{-1} f \right\|_{L^2}.\]
		This proves \eqref{eq:scattering-curvature-bound}.
		
		We now estimate \(R^2\). Since \(D_t\) commutes with \((-\Delta_t)^{-1/2}\), \eqref{eq:scattering-R-representation} can be rewritten as
		\[R = -ik D_t \left( (-\Delta_t)^{-1/2} U' (-\Delta_t)^{1/2} \right) (-\Delta_t)^{-1} + \frac{ik}{2} \left( (-\Delta_t)^{-1/2} U'' (-\Delta_t)^{1/2} \right) (-\Delta_t)^{-1}.\]
		Lemma \ref{lem:shear-conjugation} bounds the conjugated operator involving \(U'\), while \eqref{eq:scattering-curvature-conjugation} bounds the one involving \(U''\). Combining these bounds with \eqref{eq:scattering-multiplier-bounds}, we obtain
		\begin{equation}
			\label{eq:scattering-weighted-R-bound}
			\left\| k (-\Delta_t)^{-1/2} R f \right\|_{L^2} \lesssim \left\| k^2 (-\Delta_t)^{-1} f \right\|_{L^2}.
		\end{equation}
		Applying \eqref{eq:scattering-R-bound} to \(R f\), we conclude that
		\[\| R^2 f \|_{L^2} \lesssim \left\| k (-\Delta_t)^{-1/2} R f \right\|_{L^2} \lesssim \left\| k^2 (-\Delta_t)^{-1} f \right\|_{L^2},\]
		which proves \eqref{eq:scattering-R-square-bound}.
		
		To prove \eqref{eq:scattering-R-derivative-bound}, we need to compute \(\partial_tR\). Recalling the definitions of \(K\) and \(R\) in \eqref{eq:scattering-K} and \eqref{eq:scattering-R}, we have
		\[\left( \partial_t (-\Delta_t)^{-1/2} \right) (-\Delta_t)^{1/2} = R + K, \qquad (-\Delta_t)^{1/2} \left( \partial_t (-\Delta_t)^{-1/2} \right) = R - K.\]
		Differentiating \eqref{eq:scattering-R} and using \(\partial_t^2 \Delta_t = -2k^2 (U')^2\), we obtain
		\begin{align*}
			\partial_t R &= \frac{1}{2} \left( \partial_t (-\Delta_t)^{-1/2} \right) (\partial_t \Delta_t) (-\Delta_t)^{-1/2} + \frac{1}{2} (-\Delta_t)^{-1/2} (\partial_t \Delta_t) \left( \partial_t (-\Delta_t)^{-1/2} \right) + \frac{1}{2} (-\Delta_t)^{-1/2} (\partial_t^2 \Delta_t) (-\Delta_t)^{-1/2} \\
			&= (R + K) R + R (R - K) - k^2 (-\Delta_t)^{-1/2} (U')^2 (-\Delta_t)^{-1/2} \\
			&= 2R^2 + KR - RK -k^2 (-\Delta_t)^{-1/2} (U')^2 (-\Delta_t)^{-1/2}.
		\end{align*}
		Since \(\| k(-\Delta_t)^{-1/2} \|_{L^2 \to L^2} \le1\), \eqref{eq:scattering-R-bound}, \eqref{eq:scattering-K-bound}, and \eqref{eq:scattering-weighted-R-bound} give
		\[\| KRf \|_{L^2} \lesssim \left\| k^2 (-\Delta_t)^{-1} Rf \right\|_{L^2} \lesssim \left\| k (-\Delta_t)^{-1/2} Rf \right\|_{L^2} \lesssim \left\| k^2 (-\Delta_t)^{-1} f \right\|_{L^2},\]
		\[\| RKf \|_{L^2} \lesssim \left\| k (-\Delta_t)^{-1/2} Kf \right\|_{L^2} \lesssim \| Kf \|_{L^2} \lesssim \left\| k^2 (-\Delta_t)^{-1} f \right\|_{L^2}.\]
		Moreover, the operator identity
		\[k^2 (-\Delta_t)^{-1/2} (U')^2 (-\Delta_t)^{-1/2} = \left( (-\Delta_t)^{-1/2} U' (-\Delta_t)^{1/2} \right)^2 k^2 (-\Delta_t)^{-1},\]
		and Lemma \ref{lem:shear-conjugation} yield
		\[\left\| k^2 (-\Delta_t)^{-1/2} (U')^2 (-\Delta_t)^{-1/2} f \right\|_{L^2} \lesssim \left\| k^2 (-\Delta_t)^{-1} f \right\|_{L^2}.\]
		Combining these bounds with \eqref{eq:scattering-R-square-bound} proves \eqref{eq:scattering-R-derivative-bound}.
	\end{proof}
	
	\subsection{Proof of Theorem \ref{thm:scattering}}
	
	We begin with strong scattering of the phase-corrected amplitudes. Their limits will then be used to identify the leading asymptotics of the physical variables.
	
	\begin{proof}
		\noindent\textbf{Step 1. Strong scattering.} Since we will sum over the nonzero horizontal modes, we restore the subscript \(k\) on the transformed variables and operators in this step.
		
		To cancel the oscillatory cross-channel term in \eqref{eq:scattering-system}, we introduce the normal-form variables
		\begin{equation}
			\label{eq:scattering-normal-form}
			Z_{\pm,k} = F_{\pm,k} \pm \frac{e^{\mp 2ik \beta t}}{2ik \beta} R_k F_{\mp,k}.
		\end{equation}
		We need to show that the correction tends to zero, so that \(Z_{\pm,k}\) and \(F_{\pm,k}\) have the same scattering limit. By \eqref{eq:scattering-R-bound}, it suffices to estimate \(\| k (-\Delta_{t,k})^{-1/2} F_{\pm,k} \|_{L_y^2}\). The Cauchy--Schwarz inequality gives
		\[\| k (-\Delta_{t,k})^{-1/2} F_{\pm,k} \|_{L_y^2}^2 \le \| F_{\pm,k} \|_{L_y^2} \| k^2 (-\Delta_{t,k})^{-1} F_{\pm,k} \|_{L_y^2}.\]
		By the potential representation \eqref{eq:potential-representation},
		\[\| F_{\pm,k} \|_{L_y^2} = \| (V_k \pm B_k) \|_{L_y^2}, \qquad \| k^2 (-\Delta_{t,k})^{-1} F_{\pm,k} \|_{L_y^2} = \| k^2 (-\Delta_{t,k})^{-1} (V_k\pm B_k) \|_{L_y^2}.\]
		Combining the identities above with \eqref{eq:lower-endpoint} and \eqref{eq:mode-uniform-bound}, and then applying the Cauchy--Schwarz estimate, we obtain
		\begin{equation}
			\label{eq:scattering-decay-inputs}
			\| k (-\Delta_{t,k})^{-1/2} F_{\pm,k}  (t)\|_{L_y^2} \lesssim \| (v_{\mathrm{in},k},b_{\mathrm{in},k}) \|_{H_y^2} \langle t \rangle^{-1}, \qquad \|k^2 (-\Delta_{t,k})^{-1} F_{\pm,k} (t) \|_{L_y^2} \lesssim \| (v_{\mathrm{in},k},b_{\mathrm{in},k}) \|_{H_y^2} \langle t \rangle^{-2}.
		\end{equation}
		Hence
		\begin{equation}
			\label{eq:scattering-normal-form-error}
			\| Z_{\pm,k} (t) - F_{\pm,k} (t) \|_{L_y^2} \le \frac{1}{2|k\beta|} \| R_k F_{\mp,k} (t) \|_{L_y^2} \lesssim \| (v_{\mathrm{in},k},b_{\mathrm{in},k}) \|_{H_y^2} \langle t \rangle^{-1}.
		\end{equation}
		
		Differentiating \(Z_{\pm,k}\) and substituting \eqref{eq:scattering-system}, we see that the oscillatory terms involving \(R_k F_{\mp,k}\) cancel. Hence
		\[\partial_t Z_{\pm,k} = K_k F_{\pm,k} - \frac{ik}{2} (-\Delta_{t,k})^{-1/2} U'' (-\Delta_{t,k})^{-1/2} \left( F_{\pm,k} + e^{\mp 2ik \beta t} F_{\mp,k} \right) \pm \frac{e^{\mp 2ik \beta t}}{2ik \beta} ((\partial_tR_k) F_{\mp,k} + R_k \partial_t F_{\mp,k}).\]
		By \eqref{eq:scattering-K-bound}, \eqref{eq:scattering-curvature-bound}, \eqref{eq:scattering-R-derivative-bound}, and \eqref{eq:scattering-decay-inputs}, we have
		\[\| K_k F_{\pm,k} (t) \|_{L_y^2} \lesssim \| (v_{\mathrm{in},k},b_{\mathrm{in},k}) \|_{H_y^2} \langle t \rangle^{-2}.\]
		\[\left\| \frac{ik}{2} (-\Delta_{t,k})^{-1/2} U'' (-\Delta_{t,k})^{-1/2} \left( F_{\pm,k} (t) + e^{\mp 2ik \beta t} F_{\mp,k} (t) \right) \right\|_{L_y^2} \lesssim \| (v_{\mathrm{in},k},b_{\mathrm{in},k}) \|_{H_y^2} \langle t \rangle^{-2}.\]
		\[\left\| \frac{e^{\mp 2ik \beta t}}{2ik \beta} (\partial_tR_k) F_{\mp,k} (t) \right\|_{L_y^2} \lesssim \| (v_{\mathrm{in},k},b_{\mathrm{in},k}) \|_{H_y^2} \langle t \rangle^{-2}.\]
		To estimate the remaining term, we use \eqref{eq:scattering-system} once more:
		\[R_k \partial_t F_{\mp,k} = R_k K_kF_{\mp,k} + e^{\pm 2ik \beta t} R_k^2 F_{\pm,k} - \frac{ik}{2} R_k (-\Delta_{t,k})^{-1/2} U'' (-\Delta_{t,k})^{-1/2} \left( F_{\mp,k} + e^{\pm 2ik \beta t} F_{\pm,k} \right).\]
		\eqref{eq:scattering-R-bound} implies that the operator \(R_k\) is uniformly bounded on \(L_y^2\). Thus \eqref{eq:scattering-K-bound}, \eqref{eq:scattering-curvature-bound}, \eqref{eq:scattering-R-square-bound}, and \eqref{eq:scattering-decay-inputs} also give
		\[\| R_k \partial_t F_{\mp,k} (t) \|_{L_y^2} \lesssim \| (v_{\mathrm{in},k},b_{\mathrm{in},k}) \|_{H_y^2} \langle t \rangle^{-2}.\]
		Hence, we conclude that
		\begin{equation}
			\label{eq:scattering-normal-form-derivative}
			\| \partial_t Z_{\pm,k} (t) \|_{L_y^2} \lesssim \| (v_{\mathrm{in},k},b_{\mathrm{in},k}) \|_{H_y^2} \langle t \rangle^{-2}.
		\end{equation}
		
		For \(0 \le t_1 \le t_2\), integrating \eqref{eq:scattering-normal-form-derivative} gives
		\[\| Z_{\pm,k} (t_2) - Z_{\pm,k} (t_1) \|_{L_y^2} \lesssim \| (v_{\mathrm{in},k},b_{\mathrm{in},k}) \|_{H_y^2} \int_{t_1}^{t_2} \langle \tau \rangle^{-2} d\tau \lesssim \| (v_{\mathrm{in},k},b_{\mathrm{in},k}) \|_{H_y^2} \langle t_1 \rangle^{-1}.\]
		Thus \(Z_{\pm,k}(t)\) converges strongly in \(L_y^2\) to a unique profile \(F_{\pm,\infty,k}\). By \eqref{eq:scattering-normal-form-error}, \(F_{\pm,k}(t)\) has the same limit. Moreover, by Minkowski’s integral inequality
		\begin{equation}
			\label{eq:modewise-strong-scattering}
			\| F_{\pm,k} (t) - F_{\pm,\infty,k} \|_{L_y^2} \le \| F_{\pm,k} (t) - Z_{\pm,k}(t) \|_{L_y^2} + \int_{t}^{+\infty} \| \partial_\tau Z_{\pm,k} (\tau) \|_{L_y^2} d\tau \lesssim \| (v_{\mathrm{in},k},b_{\mathrm{in},k}) \|_{H_y^2} \langle t \rangle^{-1}.
		\end{equation}
		At \(t = 0\), this estimate and \(\| F_{\pm,k} (0) \|_{L_y^2} = \| (v_{\mathrm{in},k} \pm b_{\mathrm{in},k}) \|_{L_y^2}\) show that \(\| F_{\pm,\infty,k} \|_{L_y^2} \lesssim \| (v_{\mathrm{in},k},b_{\mathrm{in},k}) \|_{H_y^2}\). Since \((v_{\mathrm{in}},b_{\mathrm{in}})\in H_{x,y}^2\subset L_x^2H_y^2\), the sequence \(\{ \| F_{\pm,\infty,k} \|_{L_y^2} \}_{k \neq 0}\) is square summable. These Fourier coefficients determine profiles \(F_{\pm,\infty} \in L^2_{\neq}\). Moreover, by Plancherel's theorem and the modewise scattering estimate, we obtain
		\[\| F_\pm (t) - F_{\pm,\infty} \|_{L_x^2L_y^2} \lesssim \| (v_{\mathrm{in}},b_{\mathrm{in}}) \|_{H_{x,y}^2} \langle t \rangle^{-1}.\]
		
		\noindent\textbf{Step 2. Horizontal-component asymptotics.} Fixing \(k \neq 0\), we again suppress the mode index on the transformed variables, \(F_\pm\), and the associated operators, while retaining it on the physical Fourier coefficients, initial data, and limiting profiles.
		
		We begin with the horizontal components. By \eqref{eq:potential-representation},
		\[V_1 \pm B_1 = -D_t \Delta_t^{-1} (\Omega\pm J).\]
		 On the other hand, the definition of \(H_s\) in \eqref{eq:lower-weight} gives
		\begin{equation}
			\label{eq:terminal-lower-weight}
			H_t (t) = \mathcal U(t) \op \left( h \left( \frac{\eta}{k} \right) \right) \mathcal U(t)^{-1} = 1 -  i\sgn(k) D_t (-\Delta_t)^{-1/2},
		\end{equation}
		where we used \(h(\xi) = 1 + \xi \langle \xi \rangle^{-1}\). Thus \eqref{eq:sheared-amplitudes} implies
		\[e^{\mp ik\beta t}(V_1\pm B_1)+i\sgn(k)F_\pm=i\sgn(k)H_tF_\pm.\]
		To bound the right-hand side, we use the potential representation \eqref{eq:potential-representation} again and the lower-order energy estimate in Proposition \ref{prop:lower-energy},
		\begin{equation}
			\label{eq:lower-amplitude-decay}
			\| H_t F_\pm (t) \|_{L^2} = \| H_t (V \pm B) (t) \|_{L^2} \lesssim \| (v_{\mathrm{in},k},b_{\mathrm{in},k}) \|_{H^2} \langle t \rangle^{-2}.
		\end{equation}
		Combining this bound with \eqref{eq:modewise-strong-scattering} and using \eqref{eq:shear-transform}, we obtain
		\[\left\| (v_1 \pm b_1)_k (t) + i\sgn(k) e^{-ik(U\mp\beta)t} F_{\pm,\infty,k} \right\|_{L^2} \lesssim \| (v_{\mathrm{in},k},b_{\mathrm{in},k}) \|_{H^2} \langle t \rangle^{-1}.\]
		
		\noindent\textbf{Step 3. Vorticity--current asymptotics.} We next turn to the vorticity--current pair. The estimate for \(\partial_yF_\pm\) proved in this step will also be used for the vertical components. We likewise express \(\Omega\pm J\) in terms of \(F_\pm\). By \eqref{eq:sheared-amplitudes},
		\[\Omega \pm J = e^{\pm ik \beta t} (-\Delta_t)^{1/2} F_\pm.\]
		To identify the leading behavior of \(\Omega\pm J\), we therefore need to analyze \((-\Delta_t)^{1/2} F_\pm\). The square-root operator \((-\Delta_t)^{1/2} = (k^2-D_t^2)^{1/2}\) is less explicit than \(D_t = \partial_y - ikU't\), whose time dependence can be analyzed directly. Since both are first-order operators, we seek to compare \((-\Delta_t)^{1/2}\) with \(i\sgn(k)D_t\) on \(F_\pm\). This would allow us to obtain the leading term by analyzing \(D_t\). We will show that their difference, when applied to \(F_\pm\), remains bounded in \(L^2\) as \(t\to\infty\).
		
		Indeed, the identity \(\langle \xi \rangle + \xi = \langle \xi \rangle h(\xi)\), together with the definitions of \(G_s\) and \(H_s\) in \eqref{eq:higher-weight} and \eqref{eq:lower-weight}, gives
		\[(-\Delta_t)^{1/2} - i\sgn(k) D_t = |k| G_t(t)^{1/2} H_t(t).\]
		Since \(h \le 2\), we have \(\| H_t \|_{L^2 \to L^2} \le 2\). Using the commutativity of \(G_t\) and \(H_t\) and the Cauchy--Schwarz inequality, we obtain
		\[\left\| ((-\Delta_t)^{1/2} - i\sgn(k) D_t) F_\pm (t) \right\|_{L^2}^2 = k^2 \langle G_t F_\pm (t), H_t^2 F_\pm (t) \rangle \le 2k^2 \| G_t F_\pm (t) \|_{L^2} \| H_t F_\pm (t) \|_{L^2}.\]
		By Proposition \ref{prop:higher-energy}, we have
		\[\| G_t F_\pm (t) \|_{L^2}  = \| G_t (V \pm B) (t) \|_{L^2} \lesssim \| (v_{\mathrm{in},k},b_{\mathrm{in},k}) \|_{H^2} \langle t \rangle^2.\]
		Combining this with \eqref{eq:lower-amplitude-decay} in the preceding inequality, we obtain
		\[\left\| ((-\Delta_t)^{1/2} - i\sgn(k) D_t) F_\pm (t) \right\|_{L^2} \lesssim|k| \| (v_{\mathrm{in},k},b_{\mathrm{in},k}) \|_{H^2}.\]
		Thus it remains to prove
		\[i\sgn(k) D_t F_\pm (t) = |k| U' t F_{\pm,\infty,k} + o_{L^2}(t).\]
		
		We now analyze \(D_tF_\pm\). Since \(D_t = \partial_y - ikU't\),
		\[i\sgn(k) D_t F_\pm = |k| U't F_\pm + i\sgn(k) \partial_y F_\pm.\]
		By \eqref{eq:modewise-strong-scattering} and the boundedness of \(U'\), it suffices to show that \(\partial_y F_\pm (t) = o_{L^2}(t)\). By \eqref{eq:mode-uniform-bound}, \eqref{eq:higher-endpoint}, and the Cauchy--Schwarz inequality, we have
		\[\| D_t F_\pm (t) \|_{L^2}^2 \le \langle F_\pm(t), (-\Delta_t) F_\pm(t) \rangle \le \| F_\pm(t) \|_{L^2} \| (-\Delta_t) F_\pm(t) \|_{L^2} \lesssim k^2 \| (v_{\mathrm{in},k},b_{\mathrm{in},k}) \|_{H^2}^2 \langle t \rangle^2.\]
		Combining this with the identity
		\[\partial_y^2 = (D_t + ikU't)^2 = D_t^2 + 2ikU't D_t + ikU''t - k^2 (U')^2 t^2,\]
		we have
		\[\| \partial_y^2 F_\pm (t) \|_{L^2} \lesssim k^2 \| (v_{\mathrm{in},k},b_{\mathrm{in},k}) \|_{H^2} \langle t \rangle^2.\]
		
		To show that \(t^{-1} \partial_y F_\pm(t) \to 0\) in \(L^2\), we approximate the scattering profile by smooth compactly supported functions and combine strong convergence with the preceding second-derivative bound.  For any \(f \in C_c^\infty (\mathbb R)\), the \(L^2\) interpolation inequality applied to \(F_\pm(t) - f \in H^2\) gives
		\[t^{-2} \| \partial_y (F_\pm(t) - f) \|_{L^2}^2 \le \| F_\pm (t) - f \|_{L^2} \times t^{-2} \| \partial_y^2 (F_\pm (t) - f) \|_{L^2}.\]
		By \eqref{eq:modewise-strong-scattering}, the first factor on the right converges to \(\| F_{\pm,\infty,k} - f \|_{L^2}\). For the second factor, the preceding derivative bound for \(\partial_y^2 F_\pm\) yields
		\[\limsup_{t \to \infty} t^{-2} \| \partial_y^2 (F_\pm(t) - f) \|_{L^2} \le \limsup_{t \to \infty} \left( t^{-2} \| \partial_y^2 F_\pm (t) \|_{L^2} + t^{-2} \| \partial_y^2 f \|_{L^2} \right) \lesssim k^2 \| (v_{\mathrm{in},k},b_{\mathrm{in},k}) \|_{H^2},\]
		with a bound independent of \(f\). Therefore,
		\[\limsup_{t \to \infty} t^{-2} \| \partial_y (F_\pm(t) - f) \|_{L^2}^2\lesssim \| F_{\pm,\infty,k} - f \|_{L^2} k^2 \| (v_{\mathrm{in},k},b_{\mathrm{in},k}) \|_{H^2}.\]
		Thus the triangle inequality implies
		\[\limsup_{t\to\infty} t^{-1} \| \partial_y F_\pm (t) \|_{L^2}\lesssim \| F_{\pm,\infty,k} - f \|_{L^2}^{1/2} |k| \| (v_{\mathrm{in},k},b_{\mathrm{in},k}) \|_{H^2}^{1/2}.\]
		This holds for every fixed \(f \in C_c^\infty\). Since these functions are dense in \(L^2\), we conclude that
		\begin{equation}
			\label{eq:amplitude-derivative-decay}
			t^{-1} \| \partial_y F_\pm (t) \|_{L^2} \longrightarrow 0.
		\end{equation}
		
		Combining the preceding estimates, we obtain the leading asymptotics
		\[(\omega\pm j)_k (t,y) = |k| U'(y) t e^{-ik (U(y) \mp \beta) t} F_{\pm,\infty,k}(y) + o_{L_y^2} (t).\]
		
		\noindent\textbf{Step 4. Vertical-component asymptotics.} Finally, we consider the vertical components of \(v \pm b\), also expressing them in terms of \(F_\pm\). Specifically, \eqref{eq:potential-representation} and \eqref{eq:sheared-amplitudes} give
		\[e^{ik(U(y) \mp \beta) t} (v_2 \pm b_2)_k (t) = -ik (-\Delta_t)^{-1/2} F_\pm(t).\]
		Since \(U'\) is bounded above and below by positive constants, the asserted vertical-component asymptotics reduce to proving that \(|k| U' t (-\Delta_t)^{-1/2} F_\pm (t)\) converges strongly in \(L^2\) to \(F_{\pm,\infty,k}\). Combining \eqref{eq:terminal-lower-weight} with \(D_t = \partial_y - ikU't\), we obtain
		\[|k|U't (-\Delta_t)^{-1/2} F_\pm = F_\pm - H_t F_\pm - i\sgn(k) \partial_y \left( (-\Delta_t)^{-1/2} F_\pm \right).\]
		The first term on the right converges to \(F_{\pm,\infty,k}\) by \eqref{eq:modewise-strong-scattering}, and the second tends to zero by \eqref{eq:lower-amplitude-decay}. Hence it remains to show that the third term also tends to zero in \(L^2\).
		
		To estimate the third term, we move \(\partial_y\) onto \(F_\pm\) so that \eqref{eq:amplitude-derivative-decay} can be used. Since \(\partial_y = D_t + ikU't\) and \(D_t\) commutes with \((-\Delta_t)^{-1/2}\), we have
		\[\partial_y \left( (-\Delta_t)^{-1/2} F_\pm \right) = (-\Delta_t)^{-1/2} \partial_y F_\pm + ikt [U',(-\Delta_t)^{-1/2}] F_\pm.\]
		We estimate the first term by the Cauchy--Schwarz inequality:
		\[\| (-\Delta_t)^{-1/2} \partial_y F_\pm \|_{L^2}^2 \le \| \partial_y F_\pm \|_{L^2} \| (-\Delta_t)^{-1} \partial_y F_\pm \|_{L^2}.\]
		In view of \eqref{eq:amplitude-derivative-decay}, it suffices here to establish \(t^{-1}\) decay of the second factor. To apply the available inverse-Laplacian bounds, we use the same decomposition of \(\partial_y\) to write
		\[(-\Delta_t)^{-1} \partial_y F_\pm = (-\Delta_t)^{-1} D_t F_\pm + ikt (-\Delta_t)^{-1} U' F_\pm.\]
		\eqref{eq:scattering-multiplier-bounds} and \eqref{eq:scattering-decay-inputs} yield
		\[\| (-\Delta_t)^{-1} D_t F_\pm (t) \|_{L^2} \le \| (-\Delta_t)^{-1/2} F_\pm (t) \|_{L^2} \lesssim \| (v_{\mathrm{in},k},b_{\mathrm{in},k}) \|_{H^2} \langle t \rangle^{-1}.\]
		Applying Lemma \ref{lem:shear-conjugation} to \((-\Delta_t)^{-1}\) and using \eqref{eq:scattering-decay-inputs} again, we also obtain
		\[\| ikt (-\Delta_t)^{-1} U' F_\pm (t) \|_{L^2} \lesssim (1 + \epsilon) \| kt U' (-\Delta_t)^{-1} F_\pm (t) \|_{L^2} \lesssim \| (v_{\mathrm{in},k},b_{\mathrm{in},k}) \|_{H^2} \langle t \rangle^{-1}.\]
		Together with \eqref{eq:amplitude-derivative-decay}, these bounds imply
		\[\| (-\Delta_t)^{-1/2} \partial_y F_\pm (t) \|_{L^2} \longrightarrow 0.\]
		
		We now estimate the commutator term. The composition formula \eqref{eq:composition_formula}, a first-order Taylor expansion, and integration by parts in \(z\) give
		\[\sigma \left( \mathcal U^{-1} [U',(-\Delta_t)^{-1/2}] (-\Delta_t) \mathcal U \right) (y,\eta) = \frac{i}{2\pi} \int_{\mathbb R} \int_{\mathbb R} \int_{0}^{1} e^{-iz\lambda} U''(y+z) \left( \partial_\eta \left( k^2 + (\eta + \theta\lambda)^2 \right)^{-1/2} \right) (k^2 + \eta^2) d\theta d\lambda dz.\]
		Direct differentiation and Peetre's inequality show that, for \(l \in \{0, 1\}\) and \(m \in \{0, 2\}\),
		\[\left| \partial_\eta^l \partial_\lambda^m \left[ \left( \partial_\eta \left( k^2 + (\eta + \theta \lambda)^2 \right)^{-1/2} \right) (k^2 + \eta^2) \right] \right| \lesssim \langle \lambda \rangle^2.\]
		Integrating by parts in the oscillatory variables as in the proof of Lemma \ref{lem:shear-conjugation}, we obtain
		\[\left| \partial_y^j \partial_\eta^l \sigma \left( \mathcal U^{-1} [U',(-\Delta_t)^{-1/2}] (-\Delta_t) \mathcal U \right) (y,\eta) \right| \lesssim \epsilon, \qquad j, l \in \{0,1\}.\]
		The Calder\'on--Vaillancourt Theorem \ref{thm:CV} therefore gives
		\[\left\| [U',(-\Delta_t)^{-1/2}] (-\Delta_t) \right\|_{L^2 \to L^2} \lesssim \epsilon.\]
		By \eqref{eq:scattering-decay-inputs}, it follows that
		\[\| ikt [U',(-\Delta_t)^{-1/2}] F_\pm (t) \|_{L^2} \lesssim \epsilon \| kt (-\Delta_t)^{-1} F_\pm (t) \|_{L^2} \lesssim \epsilon \| (v_{\mathrm{in},k},b_{\mathrm{in},k}) \|_{H^2} \langle t \rangle^{-1} \longrightarrow 0.\]
		
		Combining the preceding estimates yields the asserted vertical-component asymptotics, completing the proof of Theorem \ref{thm:scattering}.
	\end{proof}
	
	\section{Asymptotic completeness}
	\label{sec5}
	
	We now prove Theorem \ref{thm:completeness}. We first extend the scattering map to the energy space \(\mathcal H\) using the uniform energy bounds, and then establish its bijectivity and the two-sided bounds. When working with Fourier coefficients, we retain the mode index \(k\) throughout this section.
	
	\begin{proof}[Proof of Theorem \ref{thm:completeness}]
		\noindent\textbf{Step 1. Extension and two-sided bounds.} By the definition of \(F_\pm\) in \eqref{eq:phase-corrected-amplitudes} and the parallelogram identity, we have
		\begin{equation}
			\label{eq:amplitude-parallelogram}
			\| F_+(t) \|_{L_x^2L_y^2}^2 + \| F_-(t) \|_{L_x^2L_y^2}^2 = 2 \left( \| v(t) \|_{L_x^2L_y^2}^2 + \| b(t) \|_{L_x^2L_y^2}^2 \right).
		\end{equation}
		The same identity holds for each nonzero horizontal mode. In particular, the amplitudes are well defined for solutions with initial data in \(\mathcal H\). Using \eqref{eq:mode-uniform-bound} and Plancherel's theorem,
		\begin{equation}
			\label{eq:amplitude-uniform-bound}
			\| (F_+,F_-) (t) \|_{L_x^2L_y^2} \lesssim \| (v_{\mathrm{in}},b_{\mathrm{in}}) \|_{L_x^2L_y^2}.
		\end{equation}
		
		Fix \((v_{\mathrm{in}},b_{\mathrm{in}}) \in \mathcal H\), and choose \((v_{\mathrm{in}}^{(m)},b_{\mathrm{in}}^{(m)}) \in \mathcal H \cap H_{x,y}^2\) converging to it in \(L_x^2L_y^2\). Let \(F_\pm^{(m)}\) be the amplitudes corresponding to these regular data. By linearity and \eqref{eq:amplitude-uniform-bound},
		\[\sup_{t \ge 0} \| (F_+,F_-)(t) - (F_+^{(m)},F_-^{(m)})(t) \|_{L_x^2L_y^2} \lesssim \| (v_{\mathrm{in}},b_{\mathrm{in}}) - (v_{\mathrm{in}}^{(m)},b_{\mathrm{in}}^{(m)}) \|_{L_x^2L_y^2} \longrightarrow 0, \qquad m \to \infty.\]
		For each \(m\), Theorem \ref{thm:scattering} gives strong convergence of \(F_\pm^{(m)}(t)\) as \(t \to \infty\). Choosing \(m\) large in the uniform approximation and then taking \(t,s\) large for this fixed \(m\), we see that \(F_\pm(t)\) form a Cauchy family in \(L_x^2L_y^2\) as \(t \to \infty\). Their limits \(F_{\pm,\infty}\) therefore exist. Thus the scattering map \(\mathcal S\) is well defined on all of \(\mathcal H\). It is linear, and passing to the limit in \eqref{eq:amplitude-uniform-bound} gives the asserted upper estimate.
		
		For the lower bound, the time-reversal argument leading to \eqref{eq:uniform-reverse-energy-bound} uses only the basic \(L^2\) energy estimate and therefore applies to every initial datum in \(\mathcal H\). The parallelogram identity \eqref{eq:amplitude-parallelogram} and the strong convergence of \(F_\pm(t)\) then give, upon letting \(t\to\infty\),
		\[\| (v_{\mathrm{in}},b_{\mathrm{in}}) \|_{L_x^2L_y^2} \lesssim \| (F_{+,\infty},F_{-,\infty}) \|_{L_x^2L_y^2}.\]
		Thus \(\mathcal S\) is injective, its inverse on \(\ran \mathcal S\) is bounded, and its range is closed.
		
		\noindent\textbf{Step 2. Surjectivity.} Since \(\ran \mathcal S\) is closed, it suffices to show that it contains every pair \((f_+,f_-)\) with finitely many nonzero horizontal modes and Fourier coefficients in \(C_c^\infty (\mathbb R)\). Fix such a pair. For each \(t \ge 0\), prescribe \(F_\pm^{(t)}(t)=f_\pm\). By \eqref{eq:phase-corrected-amplitudes} and the Biot--Savart formula, this condition determines a unique state at time \(t\), and backward evolution of \eqref{eq:linear} then determines unique initial data \((v_{\mathrm{in}}^{(t)},b_{\mathrm{in}}^{(t)}) \in \mathcal H\). We will show that these initial data converge in \(\mathcal H\) as \(t\to\infty\) and that their limit has scattering profiles \((f_+,f_-)\).
		
		To show that the amplitudes remain close to \(f_\pm\) after time \(t\), we need the inverse-Laplacian decay used in the normal-form argument. We obtain it from the lower-order weight with terminal time \(s \ge t \ge 0\). For each fixed \(k \neq 0\) and \(f_{\pm,k} \in C_c^\infty\), we claim that
		\begin{equation}
			\label{eq:terminal-shift-bound}
			\| H_s(t) f_{\pm,k} \|_{L_y^2} \lesssim \| f_{\pm,k} \|_{H_y^2} \langle s \rangle^{-2},
		\end{equation}
		where the \(H_y^2\) norms on the right are finite, and only finitely many modes are nonzero by our choice of \(f_\pm\). Indeed, repeating the operator estimate for \(\partial_t H_s\) in the proof of Proposition \ref{prop:lower-energy} without the factor \(S\), we obtain
		\[\frac{d}{dt} \frac{1}{2} \| H_s(t) f_{\pm,k} \|_{L_y^2}^2 = \re \langle H_s(t) f_{\pm,k},(\partial_t H_s (t)) f_{\pm,k} \rangle \le \left( -\frac{U'_{\inf}}{2} + C \epsilon \right) \| Q_s (t) H_s (t) f_{\pm,k} \|_{L_y^2}^2 \le 0.\]
		Thus
		\[\| H_s(t) f_{\pm,k} \|_{L_y^2} \le \| H_s(0) f_{\pm,k} \|_{L_y^2} \lesssim \| k^{-2} (-\Delta_k) f_{\pm,k} \|_{L_y^2} \langle s \rangle^{-2} \lesssim \| f_{\pm,k} \|_{H_y^2} \langle s \rangle^{-2},\]
		which proves \eqref{eq:terminal-shift-bound}.
		
		Applying Proposition \ref{prop:lower-energy} to the solution with \(F_\pm^{(t)}(t) = f_\pm\) on \([t,s]\), together with \eqref{eq:amplitude-parallelogram} and the terminal multiplier bound for \(H_s(s)\) and \eqref{eq:terminal-shift-bound}, we have
		\begin{align*}
			\| k^2 (-\Delta_{s,k})^{-1} (F_{+,k}^{(t)},F_{-,k}^{(t)}) (s) \|_{L_y^2} &\lesssim \| H_s(s) (F_{+,k}^{(t)},F_{-,k}^{(t)}) (s) \|_{L_y^2} \lesssim \| H_s(t) (F_{+,k}^{(t)},F_{-,k}^{(t)}) (t) \|_{L_y^2} \\
			&= \| H_s(t) (f_{+,k},f_{-,k}) \|_{L_y^2} \lesssim \| (f_{+,k},f_{-,k}) \|_{H_y^2} \langle s \rangle^{-2}.
		\end{align*}
		The basic energy estimate in Proposition \ref{prop:basic-energy}, applied on \([t,s]\), and \eqref{eq:amplitude-parallelogram} give
		\[\| (F_{+,k}^{(t)},F_{-,k}^{(t)}) (s) \|_{L_y^2} \lesssim \|(f_{+,k},f_{-,k}) \|_{L_y^2}.\]
		Combining this with the preceding estimate and the Cauchy--Schwarz inequality, we obtain
		\begin{equation}
			\label{eq:terminal-half-order-bound}
			\| k (-\Delta_{s,k})^{-1/2} (F_{+,k}^{(t)},F_{-,k}^{(t)}) (s) \|_{L_y^2} \lesssim \| (f_{+,k},f_{-,k}) \|_{H_y^2} \langle s \rangle^{-1}.
		\end{equation}
		
		Let \(Z_{\pm,k}^{(t)}\) be the normal-form variables in \eqref{eq:scattering-normal-form} associated with \(F_{\pm,k}^{(t)}\). The estimates for the normal-form correction and its time derivative in the proof of Theorem \ref{thm:scattering} apply with the preceding bounds in place of \eqref{eq:scattering-decay-inputs}, giving
		\[\| Z_{\pm,k}^{(t)}(s) - F_{\pm,k}^{(t)}(s) \|_{L_y^2} \lesssim \| (f_{+,k},f_{-,k}) \|_{H_y^2} \langle s \rangle^{-1}, \qquad \| \partial_s Z_{\pm,k}^{(t)}(s) \|_{L_y^2} \lesssim \| (f_{+,k},f_{-,k}) \|_{H_y^2} \langle s \rangle^{-2}.\]
		Since \(F_{\pm,k}^{(t)}(t) = f_{\pm,k}\), integration over \([t,s]\) implies
		\begin{equation}
			\label{eq:terminal-forward-tail}
			\| F_{\pm,k}^{(t)}(s) - f_{\pm,k} \|_{L_y^2} \le \| F_{\pm,k}^{(t)}(s) - Z_{\pm,k}^{(t)}(s) \|_{L_y^2} + \int_{t}^{s} \| \partial_\tau Z_{\pm,k}^{(t)}(\tau) \|_{L_y^2} d\tau + \| Z_{\pm,k}^{(t)}(t) - f_{\pm,k} \|_{L_y^2} \lesssim \| (f_{+,k},f_{-,k}) \|_{H_y^2} \langle t \rangle^{-1}.
		\end{equation}
		Summing \eqref{eq:terminal-forward-tail} over the finitely many nonzero modes and using Plancherel's theorem, we obtain
		\[\| (F_+^{(t)},F_-^{(t)}) (s) - (f_+,f_-) \|_{L_x^2L_y^2} \lesssim \| (f_+,f_-) \|_{L_x^2H_y^2} \langle t \rangle^{-1}.\]
		The solution constructed with terminal time \(s\) satisfies \(F_\pm^{(s)} (s) = f_\pm\). Hence the preceding estimate controls the difference between the amplitudes of the two solutions at time \(s\). Their difference also solves \eqref{eq:linear}, so \eqref{eq:uniform-reverse-energy-bound} and \eqref{eq:amplitude-parallelogram} give
		\begin{align*}
			\| (v_{\mathrm{in}}^{(t)},b_{\mathrm{in}}^{(t)}) - (v_{\mathrm{in}}^{(s)},b_{\mathrm{in}}^{(s)}) \|_{L_x^2L_y^2} &\lesssim \| (v^{(t)},b^{(t)}) (s) - (v^{(s)},b^{(s)}) (s) \|_{L_x^2L_y^2} \\
			&= 2^{-1/2} \| (F_+^{(t)},F_-^{(t)}) (s) - (f_+,f_-) \|_{L_x^2L_y^2} \\
			&\lesssim \| (f_+,f_-) \|_{L_x^2H_y^2} \langle t \rangle^{-1}.
		\end{align*}
		Hence the initial data \((v_{\mathrm{in}}^{(t)},b_{\mathrm{in}}^{(t)})\) form a Cauchy family in \(\mathcal H\) as \(t\to\infty\).
		
		Let \((v_{\mathrm{in}},b_{\mathrm{in}})\in\mathcal H\) be the limit of these initial data, and let \(F_\pm\) be its amplitudes. Since \(F_\pm^{(t)}(t) = f_\pm\), the uniform bound \eqref{eq:amplitude-uniform-bound} applied to the difference of the two solutions gives
		\[\|(F_+,F_-)(t)-(f_+,f_-)\|_{L_x^2L_y^2}\lesssim\|(v_{\mathrm{in}},b_{\mathrm{in}})-(v_{\mathrm{in}}^{(t)},b_{\mathrm{in}}^{(t)})\|_{L_x^2L_y^2}\longrightarrow0.\]
		Thus \(\mathcal S (v_{\mathrm{in}},b_{\mathrm{in}}) = (f_+,f_-)\). Such pairs are dense in the target space, and \(\ran\mathcal S\) is closed by Step 1. Hence \(\mathcal S\) is surjective, completing the proof.
	\end{proof}
	
	\noindent{\bf Acknowledgments.}
	This work was supported by the National Natural Science Foundation of China 12271497. The author would like to thank Professor Lifeng Zhao for his guidance.
	
	\printbibliography
	
\end{document}